\newtheorem{theorem}{Theorem}[section]
\newtheorem{corollary}[theorem]{Corollary}
\newtheorem{proposition}[theorem]{Proposition}
\newtheorem{lemma}[theorem]{Lemma}
\numberwithin{equation}{section}
\theoremstyle{definition}
\newtheorem{definition}[theorem]{Definition}
\newtheorem{problem}[theorem]{Problem}
\theoremstyle{remark}
\newtheorem{remark}[theorem]{Remark}
\newtheorem{remarks}[theorem]{Remarks}
\newtheorem*{remark*}{Remark}
\newcommand{\1}[1]{{\mathbbm{1}\mkern -1.5mu}{\{#1\}}}
\newcommand{\R}{{\mathbb R}}
\newcommand{\Z}{{\mathbb Z}}
\newcommand{\N}{{\mathbb N}}
\newcommand{\ZP}{{\mathbb Z}_+}
\newcommand{\RP}{{\mathbb R}_+}
\newcommand{\bbX}{{\mathbb X}}
\DeclareMathOperator{\Exp}{\mathbb{E}}
\DeclareMathOperator{\tExp}{{\widetilde \Exp}}
\let\Pr\relax
\DeclareMathOperator{\Pr}{{\mathbb P}}
\let\Re\relax
\DeclareMathOperator{\Re}{{\mathrm{Re}}}
\DeclareMathOperator{\bbQ}{{\mathbb Q}}
\DeclareMathOperator{\tPr}{{\widetilde \Pr}}
\DeclareMathOperator{\Var}{\mathbb{V}ar}
\newcommand{\gcr}{{\gamma_{\mathrm{c}}}}
\newcommand{\supp}{\mathop \mathrm{supp}}
\newcommand{\tra}{{\scalebox{0.6}{$\top$}}}
\newcommand{\eps}{\varepsilon}
\newcommand{\re}{{\mathrm{e}}}
\newcommand{\ud}{{\mathrm d}}
\newcommand{\cF}{{\mathcal F}}
\newcommand{\tZ}{{\widetilde Z}}
\newcommand{\tS}{{\widetilde S}}
\newcommand{\tp}{{\widetilde p}}
\newcommand{\tmu}{{\widetilde \mu}}
\newcommand{\bigmid}{\; \bigl| \;}
\newcommand{\taue}{{\tau_\mathcal{E}}}
\def\namedlabel#1#2{\begingroup  
    (#2)%
    \def\@currentlabel{#2}%
    \phantomsection\label{#1}\endgroup
}
\newlist{myenumi}{enumerate}{10}
\setlist[myenumi]{leftmargin=0pt, labelindent=\parindent, listparindent=\parindent, labelwidth=0pt, itemindent=!, itemsep=1pt, parsep=4pt}
\newlist{thmenumi}{enumerate}{10}
\setlist[thmenumi]{leftmargin=0pt, labelindent=\parindent, listparindent=\parindent, labelwidth=0pt, itemindent=!}
\begin{document}

\title{Strong transience for one-dimensional Markov chains with asymptotically zero drifts}
\author{Chak Hei Lo\footnote{Department of Statistical Science, University College London, Gower Street, London WC1E 6BT, UK. Email: \href{mailto:chak.lo@ucl.ac.uk}{\texttt{chak.lo@ucl.ac.uk}}.} \and Mikhail V.\ Menshikov\footnote{Department of Mathematical Sciences, Durham University, Upper Mountjoy, Durham DH1 3LE, UK. Email: \href{mailto:mikhail.menshikov@durham.ac.uk}{\texttt{mikhail.menshikov@durham.ac.uk}}, \href{mailto:andrew.wade@durham.ac.uk}{\texttt{andrew.wade@durham.ac.uk}}.} \and Andrew R.\ Wade\footnotemark[2]}

\maketitle

\begin{abstract}
For near-critical, transient Markov chains on the non-negative integers in the Lamperti regime, where the mean drift at $x$ decays as $1/x$ as $x \to \infty$, we quantify degree of transience via existence of moments for conditional return times and for last exit times, assuming increments are uniformly bounded. Our proof uses a Doob $h$-transform, for the transient process conditioned to return, and we show that the conditioned process is also of Lamperti type with appropriately transformed parameters. To do so, we obtain an asymptotic expansion for the ratio of two return probabilities, evaluated at two nearby starting points; a consequence of this is that the return probability for the transient Lamperti process is a regularly-varying function of the starting point.
\end{abstract}

\medskip

\noindent
{\em Key words:}  Transience; Lamperti problem; last exit times; conditional return times; Doob transform; return probabilities.

\medskip

\noindent
{\em AMS Subject Classification:} 60J10 (Primary); 60G50, 60J80 (Secondary)

\section{Introduction}
\label{sec:intro}

A transient, irreducible Markov chain~$X= (X_0, X_1, \ldots)$ on a countable state space~$S$ has $\Pr ( \tau = \infty ) \in (0,1)$, where $\tau$ is the first return time to a given state. Such a chain is \emph{strong transient} if, moreover, $\Exp [ \tau \mid \tau < \infty ] < \infty$. The concept of strong transience goes back at least to Port~\cite{port67}.
It is known (see Lemma~\ref{lem:equivalence} below) that the strong transience condition $\Exp [ \tau \mid \tau < \infty ] < \infty$
is equivalent both to~(a) $\sum_{n = 1}^\infty n \Pr ( X_n = x ) < \infty$
(for $x \in S$) and to~(b) the last exit time from any state being integrable. For example, in the case where $X$ is simple symmetric random walk on $\mathbb{Z}^d$ started from the origin $X_0 = 0$, one has $\Pr ( X_{2n} = 0) \asymp n^{-d/2}$ and so 
formulation~(a) shows that 
strong transience occurs for $d \geq 5$, while for $d \in \{3,4\}$ there is transience but not strong transience. 

In the present paper we investigate strong transience, and the finer property $\Exp [ \tau^\beta \mid \tau < \infty ] < \infty$, $\beta >0$, in the context of Markov chains on $\ZP := \{0,1,2,\ldots\}$ in the Lamperti regime~\cite{lamp1,lamp2,lamp3}, which is the critical regime for recurrence and transience in the case of processes whose increments have constant-order variance. 
As well as being of interest in their own right, Lamperti processes are prototypical near-critical stochastic processes that have arisen in numerous contexts: see e.g.~\cite{dkw-book} and~\cite{mpw} for surveys.  
Strong transience is of interest both as a quantification of transience, and also as it bears on the geometry of transient trajectories, as has been explored for random walks in the context of 
 the range (that is, how many sites the process has visited by a given time; see~\cite{jo,jp72a,as} and~\cite[\S6.2]{hughes}) and points of self-intersection and cut points (points that separate the past and future trajectories into disjoint sets; see~\cite{de,et}).

\section{Strong transience}
\label{sec:strong-transience}

Before describing in detail Lamperti processes and our main result (in Section~\ref{sec:lamperti} below), we  start by reviewing the concept of strong transience, which is the main focus of this paper, in the context of discrete Markov chains, as in the following assumption.

\begin{description}
\item
[\namedlabel{ass:markov}{M}]
Suppose that $X = (X_n; n \in \ZP)$ is an irreducible, time-homogeneous Markov chain on a countable state space $S$. 
\end{description}

The transition probabilities of $X$ specify a collection of laws $\Pr_x$, $x \in S$, for the chain started from a fixed initial state $x \in S$, i.e., $\Pr_x ( X_0 = x) =1$. We write $\Exp_x$ for the  expectation corresponding to $\Pr_x$. The initial state will play no part in our results, but will be used in formulating our notation. One can also realise $X$ on an externally specified probability space in which $X_0$ is random;  we write simply $\Pr$ for the probability measure in such cases, and we also use $\Pr$ to stand in for $\Pr_x$ when $x$ is unimportant. 
We use the notation $\N := \{1,2,3,\ldots\}$. 

For $y \in S$, 
define the \emph{first hitting time} $\tau_y$ to $y$ and the
\emph{last exit time} $\lambda_y$ from $y$ as
\begin{equation}
    \label{eq:tau-lambda} \tau_y := \inf \{ n \in \N : X_n = y \}; ~~~ \lambda_y := \sup \{ n \in \N : X_n = y \} ,
    \end{equation}
where the conventions $\inf \emptyset := +\infty$ and $\sup \emptyset := 0$ are in force. Of course, the distributions
of $\tau_y$ and $\lambda_y$ depend not only on~$y$ but also on the distribution of~$X_0$; in particular,
we will introduce notation for their moments under $\Pr_x$, when $X_0 = x$ is fixed. 
Define for $\beta > 0$ and $x, y \in S$ the quantities
\begin{align*}
T_\beta (x, y) & :=  \Exp_x \bigl[ \tau^\beta_y \1 { \tau_y < \infty } \bigr] ; \\
L_\beta (x, y) & := \Exp_x \bigl[ \lambda_y^\beta \bigr] ; \\
U_\beta (x,y) & := \Exp_x \sum_{n \in \N} n^\beta \1 { X_n = y }  ; \end{align*}
in the definition of $T_\beta$ the convention is that $\tau_y^\beta \1 { \tau_y < \infty } := 0$ if $\tau_y = \infty$. 
Also write
\[ T_\beta (x) := T_\beta (x,x), ~~~ L_\beta (x) := L_\beta (x,x), ~~~ U_\beta (x) := U_\beta (x, x) ;\]
in words, $T_\beta(x)$ is, for the Markov chain started from $X_0=x$,  the $\beta$th moment of the random variable that is equal to the first return time to~$x$ when finite, and takes value~$0$ if the Markov chain never returns to~$x$.

\begin{definition}
Suppose that~\eqref{ass:markov} holds.
Let $\beta >0$. We say that $X$ is \emph{$\beta$-strong transient} if $T_\beta (x) < \infty$ for some $x \in S$.
If $\beta=1$, we simply say $X$ is \emph{strong transient}.
\end{definition}

The following result gives equivalent formulations of $\beta$-strong transience. The intuition is that, in the transient case, 
the number of returns to a given state is geometrically distributed, so
roughly equivalent tail behaviour is exhibited by  conditional first-return times, last exit times, and even the sum of times at which the process visits that site. 

\begin{lemma}
\label{lem:equivalence}
Suppose that~\eqref{ass:markov} holds, and that $X$ is transient.
Let $\beta >0$. Then the following are equivalent.
\begin{enumerate}[label=(\roman*)]
\setlength\itemsep{-0.2em}
\item\label{lem:equivalence-i}
$T_\beta (x) < \infty$ for some $x  \in S$.
\item\label{lem:equivalence-ii}
$T_\beta (x,y) < \infty$ for all $x,y \in S$.
\item\label{lem:equivalence-iii}
$L_\beta (x) < \infty$ for some $x \in S$.
\item\label{lem:equivalence-iv}
$L_\beta (x,y) < \infty$ for all $x, y \in S$. 
\item\label{lem:equivalence-v}
$U_\beta (x  ) < \infty$ for some $x \in S$.
\item\label{lem:equivalence-vi}
$U_\beta (x,y) < \infty$ for all $x,y \in S$.
\end{enumerate}
\end{lemma}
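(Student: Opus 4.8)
The plan is to establish a cycle of implications linking the six conditions, exploiting the structure of a transient Markov chain through a fixed state. Throughout, let $G(x,y) := \Exp_x \sum_{n \in \N} \1{X_n = y}$ be the (truncated) Green's function; since $X$ is transient and irreducible, $G(x,y) \in (0,\infty)$ for all $x,y$, and moreover the number $N_y$ of visits to $y$ (after time $0$) under $\Pr_x$ is stochastically dominated by a geometric random variable, with $\Pr_y(N_y = k) = \rho^k(1-\rho)$ where $\rho = \rho(y) := \Pr_y(\tau_y < \infty) < 1$. First I would record the two families of trivial implications: \ref{lem:equivalence-ii}$\Rightarrow$\ref{lem:equivalence-i}, \ref{lem:equivalence-iv}$\Rightarrow$\ref{lem:equivalence-iii}, \ref{lem:equivalence-vi}$\Rightarrow$\ref{lem:equivalence-v} are immediate by specialising $y = x$. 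Then I would prove \ref{lem:equivalence-i}$\Rightarrow$\ref{lem:equivalence-v} (and more generally the diagonal versions are equivalent), \ref{lem:equivalence-v}$\Rightarrow$\ref{lem:equivalence-vi}, \ref{lem:equivalence-vi}$\Rightarrow$\ref{lem:equivalence-iv}, and \ref{lem:equivalence-iv}$\Rightarrow$\ref{lem:equivalence-ii}, which together with the trivial arrows closes the loop.

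The core computation is the link between $T_\beta$ and $U_\beta$ at a single state. Starting from $X_0 = x$, decompose the visits to $x$ by their order: if $\sigma_1 < \sigma_2 < \cdots$ are the successive return times to $x$ (with $\sigma_1 = \tau_x$), then $U_\beta(x) = \Exp_x \sum_{k \ge 1} \sigma_k^\beta \1{\sigma_k < \infty}$, and by the strong Markov property the inter-visit increments $\sigma_{k+1} - \sigma_k$ are, conditionally on $\sigma_k < \infty$, i.i.d.\ copies of $\tau_x$ under $\Pr_x$. Writing $\sigma_k = \sum_{j=1}^k \xi_j$ with $\xi_j$ i.i.d.\ $\eqd (\tau_x \mid \tau_x < \infty)$ on the event $\{\sigma_k < \infty\}$, and using $\Pr_x(\sigma_k < \infty) = \rho^k$, one gets
\[
U_\beta(x) = \sum_{k \ge 1} \rho^k \, \Exp\Bigl[ \Bigl( \sum_{j=1}^k \xi_j \Bigr)^\beta \Bigr].
\]
Using the elementary bounds $\max_j \xi_j^\beta \le (\sum_j \xi_j)^\beta \le k^{(\beta-1)^+} \sum_j \xi_j^\beta$ for $\beta \ge 1$ (and the reverse-type bound $(\sum_j \xi_j)^\beta \le \sum_j \xi_j^\beta$ when $0 < \beta \le 1$), one sees that $\Exp[(\sum_{j=1}^k \xi_j)^\beta]$ is finite for every $k$ iff $\Exp[\xi_1^\beta] = T_\beta(x)/\rho < \infty$, and in that case it grows at most polynomially in $k$, so the geometric factor $\rho^k$ makes the sum converge. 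Hence $T_\beta(x) < \infty \iff U_\beta(x) < \infty$. The same argument applied with $x$ replaced by $y$ shows \ref{lem:equivalence-i}$\iff$\ref{lem:equivalence-v} once one knows the diagonal condition is independent of the state — which follows from the off-diagonal step below applied twice.

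For the off-diagonal upgrade \ref{lem:equivalence-v}$\Rightarrow$\ref{lem:equivalence-vi}: fix $x, y, z \in S$ and condition on the first visit to $y$ after time $0$. By irreducibility there is $m$ with $\Pr_y(X_m = y) > 0$... more simply, by the last-exit decomposition, every visit to $z$ at time $n$ can be split at the last visit to $y$ before time $n$, giving $U_\beta(x,z) \le C_\beta \bigl( \Exp_x[\tau_y^\beta \1{\tau_y<\infty}] \cdot G(y,z) + U_\beta(y,y)\cdot(\text{bounded factor}) + \cdots \bigr)$; I would instead use the cleaner route via first entry: $U_\beta(x,z) = \Exp_x\bigl[ \1{\tau_z < \infty}(\tau_z + R)^\beta \bigr]$-type recursions are messy, so the efficient argument is: $U_\beta(x,y) \le C_\beta\,T_\beta(x,y)\,(1 + G(y,y)) + C_\beta\, G(x,y)\, U_\beta(y,y)$ by splitting each visit to $y$ at the first one, then bound $T_\beta(x,y) \le \Exp_x[\tau_y^\beta \1{\tau_y < \infty}] < \infty$ because $\tau_y$ under $\Pr_x$, on $\{\tau_y < \infty\}$, is dominated by a geometric-type sum of excursions just as above — this reduces \ref{lem:equivalence-ii} and \ref{lem:equivalence-vi} to the diagonal statement $U_\beta(y,y) < \infty$. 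Finally \ref{lem:equivalence-vi}$\Rightarrow$\ref{lem:equivalence-iv} via the pointwise bound $\lambda_y^\beta \le \sum_{n \in \N} n^\beta \1{X_n = y}$ (since on $\{\lambda_y \ge 1\}$ the last term alone equals $\lambda_y^\beta$), so $L_\beta(x,y) \le U_\beta(x,y)$; and \ref{lem:equivalence-iv}$\Rightarrow$\ref{lem:equivalence-ii} by conditioning on $\lambda_y$ and using $\tau_x \le \lambda_x$ pointwise on $\{\tau_x < \infty\}$, giving $T_\beta(x,x) \le L_\beta(x,x)$, then upgrading off-diagonal as before.

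The main obstacle is bookkeeping in the off-diagonal steps — ensuring the strong Markov decompositions are applied to stopping times that are a.s.\ finite on the relevant events, and that the constants $C_\beta$ (which come from $(a+b)^\beta \le C_\beta(a^\beta + b^\beta)$) and the Green's-function factors genuinely stay finite, which they do precisely because transience makes visit counts geometrically integrable. The only place real care is needed is verifying that $\Exp[(\sum_{j=1}^k \xi_j)^\beta]$ grows sub-geometrically in $k$ when $\Exp[\xi_1^\beta] < \infty$; for non-integer $\beta > 1$ this uses the triangle inequality in $L^\beta$ or the $c_r$-inequality rather than a binomial expansion, but is otherwise standard. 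I would present the diagonal equivalence $T_\beta(x) < \infty \iff U_\beta(x) < \infty$ in full and treat the off-diagonal and last-exit comparisons more briefly.
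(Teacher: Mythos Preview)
Your overall architecture is sound and close to the paper's: both routes rest on the strong Markov property, the $c_r$-inequality $(a+b)^\beta \leq C_\beta(a^\beta+b^\beta)$, and the pointwise comparison $\tau_y^\beta \1{\tau_y<\infty} \leq \lambda_y^\beta \leq \sum_n n^\beta\1{X_n=y}$. Your diagonal equivalence $T_\beta(x)<\infty \iff U_\beta(x)<\infty$ via the renewal decomposition $U_\beta(x)=\sum_{k\geq 1}\rho^k\,\Exp\bigl[(\sum_{j=1}^k\xi_j)^\beta\bigr]$ is correct and in fact more explicit than the paper, which instead records the two-sided bound $a_\beta\bigl[N(y)T_\beta(x,y)+(1+U_\beta(y))\Pr_x(\tau_y<\infty)\bigr] \leq U_\beta(x,y) \leq A_\beta[\cdots]$ at the off-diagonal level and then specialises.

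There is, however, a genuine gap in your off-diagonal upgrade. To pass from $T_\beta(y)<\infty$ to $T_\beta(x,y)<\infty$ you assert that ``$\tau_y$ under $\Pr_x$, on $\{\tau_y<\infty\}$, is dominated by a geometric-type sum of excursions just as above'', but excursions from which state? Decomposing into excursions from~$x$ gives pieces whose $\beta$-moment is controlled by $T_\beta(x)$, which you do not yet know is finite; and you cannot use excursions from~$y$ since the chain does not start there. This circularity also undermines your claim that state-independence of the diagonal condition ``follows from the off-diagonal step below applied twice''. The paper fills this hole with the one-line inequality $T_\beta(x,y)\,\Pr_y(\tau_x<\tau_y)\leq T_\beta(y)$: start at~$y$, and on the event $\{\tau_x<\tau_y\}$ (positive probability by irreducibility) the return time to~$y$ dominates the residual hitting time from~$x$ to~$y$, so $T_\beta(y)\geq \Exp_y\bigl[(\tau_y-\tau_x)^\beta\1{\tau_x<\tau_y<\infty}\bigr]=\Pr_y(\tau_x<\tau_y)\,T_\beta(x,y)$ by the strong Markov property at~$\tau_x$. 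An alternative fix is to bypass $T_\beta$ entirely and go directly from (v) to (vi) via irreducibility: pick $m_1,m_2$ with $p_1:=\Pr_{y_0}(X_{m_1}=x)>0$ and $p_2:=\Pr_y(X_{m_2}=y_0)>0$, so that $\Pr_{y_0}(X_n=y_0)\geq p_1p_2\,\Pr_x(X_{n-m_1-m_2}=y)$ and hence $U_\beta(x,y)\leq (p_1p_2)^{-1}U_\beta(y_0)$. Either repair completes your cycle; note incidentally that (iv)$\Rightarrow$(ii) is immediate from the off-diagonal bound $T_\beta(x,y)\leq L_\beta(x,y)$, so no separate ``upgrading'' is needed there.
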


For $\beta =1$, we can trace the definition of strong transience back to Port~\cite{port67} and Jain and Orey~\cite{jo}
(observing that Lemma~\ref{lem:equivalence} gives equivalence of definitions). Much of Lemma~\ref{lem:equivalence} is contained in Theorem~3.2 of~\cite{port66} (which deals with $\beta=1$)
and Theorem~4 of~\cite{takeuchi}: we give a short proof in Appendix~\ref{sec:appendix}. 
 Note that
$T_1  (x,y)  =  \sum_{n \in \ZP}  \Pr_x ( n < \tau_y < \infty )$ and, by Fubini's theorem,
\begin{align*} U_\beta (x, y) & = 
\Exp_x \sum_{n \in \N} \sum_{m=0}^{n-1} n^{\beta-1} \1 { X_n = y } = \sum_{m \in\ZP} \Exp_x \sum_{n > m} n^{\beta-1} \1 { X_n = y } , \end{align*}
which (for $\beta=1$) is  the form of $U_1(x,y)$ used in~\cite{jo,port67}.

 \section{Lamperti processes}
 \label{sec:lamperti}
 
 We now turn to the class of models that we will study. We suppose that~\eqref{ass:markov} holds for state space $S = \ZP$. 
 It follows from irreducibility that~$X$ is non-confined, i.e., $\limsup_{n \to \infty} X_n = \infty$, a.s.; see e.g.~\cite[Cor.~2.1.10]{mpw}.
 If $X$ is recurrent, then $\liminf_{n \to \infty} X_n = 0$, a.s.,
 while if $X$ is transient, then $\lim_{n \to \infty} X_n = \infty$, a.s.~\cite[Lem.~3.6.5]{mpw}. It is the transient case that interests us here.
 
 The assumptions that we impose  all pertain to the transition probabilities of the Markov chain; the distribution of $X_0$ plays no role.  We will assume the following.
\begin{description}
\item
[\namedlabel{ass:bounded-jumps}{B}]
Suppose that there exists a constant $B \in \N$ such that, for all $n \in \ZP$,
\[
\Pr ( | X_{n+1} - X_n | \leq B   ) =1 . \]  
\item
[\namedlabel{ass:irreducibility}{I}]
Suppose that there exist $\eps>0$ and $m \in \N$ such that, for every $i \in \ZP$,
\begin{equation}
    \max_{1 \leq n \leq m} \Pr_i ( X_n = j ) \geq \eps, \text{ for all } j \in \ZP \text{ with } | j - i | \leq B ,
\end{equation}
where $B$ is the constant from~\eqref{ass:bounded-jumps}.
\end{description}
Assumption~\eqref{ass:bounded-jumps} is boundedness of increments,
while~\eqref{ass:irreducibility} is a strengthening of the irreducibility condition to incorporate some uniformity. 
Indeed, irreducibility shows that for every $i, j \in \ZP$, $\Pr_j ( X_{n_{i,j}} = i ) \geq \eps_{i,j}$ for some $n_{i,j} \in \N$ and $\eps_{i,j} >0$. Hence, setting $m_i := \max_{j:|j-i| \leq B} n_{i,j}$ and $\eps_i := \min_{j:|j-i| \leq B} \eps_{i,j}$ we have $\max_{1 \leq n \leq m_i} \Pr_j (X_n = i ) \geq \eps_i$ for all~$i,j \in \ZP$ with $|j-i| \leq B$, where $m_i \in \N$ and $\eps_i >0$; assumption~\eqref{ass:irreducibility} demands that one may take $m_i$ and $\eps_i$ to be independent of~$i$.

Under assumption~\eqref{ass:bounded-jumps}, the increment moment function $\mu_k : \ZP \to \R$ given by
\begin{equation}
    \label{eq:mu-k-def}
    \mu_k (x) := \Exp [ (X_{n+1} - X_n )^k \mid X_n = x] , \text{ for } x \in \ZP,
\end{equation}
is well defined for any $k \in \N$ (and does not depend on~$n \in \ZP$).

From the point of view of the recurrence and transience of $X$, the most interesting regime is when $2 x \mu_1 (x)$ and $\mu_2 (x)$ are comparable: investigation of this case goes back to pioneering work of Lamperti~\cite{lamp1,lamp2,lamp3}. We will assume the following Lamperti-type asymptotic conditions.

\begin{description}
\item
[\namedlabel{ass:lamperti-transience}{L}]
Suppose that there exist finite constants $c$ and $s^2$ such that $2c > s^2 > 0$, and
\[
 \mu_1 (x) = \frac{c}{x} + o \left( \frac{1}{x \log x} \right) , \text{ and } \mu_2 (x)  = s^2 + o \left( \frac{1}{\log x} \right) , \text{ as }  x \to \infty. \]  
\end{description}
 
 Assuming the preceding assumptions in this section, condition~\eqref{ass:lamperti-transience} ensures transience, via a result of Lamperti~\cite[Thm.~3.1, p.~320]{lamp1}, and this is optimal in the sense that $2c \leq s^2$ implies recurrence~\cite[Thm.~3.5.2, p.~108]{mpw} (see also~\cite{mai}). The main result of this paper is the following classification of $\beta$-strong transience for Lamperti processes of the type described in the present section.
 
 \begin{theorem}
\label{thm:lamp-st}
Suppose that~\eqref{ass:markov} holds for $S= \ZP$, and that~\eqref{ass:bounded-jumps}, \eqref{ass:irreducibility}, and~\eqref{ass:lamperti-transience} hold. Then $X$ is $\beta$-strong transient
if $2c > (2\beta+1) s^2$ and not $\beta$-strong transient if $2c < (2\beta+1) s^2$.
\end{theorem}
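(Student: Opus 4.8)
### Proof proposal

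**Strategy.** The plan is to use a Doob $h$-transform to turn the transient Lamperti chain $X$, conditioned to return to some fixed state, into a new Markov chain $\widehat X$ which is itself recurrent (or at least positive-recurrent on a suitable event), and for which $\beta$-strong transience of $X$ translates into a moment condition for $\widehat X$. Concretely, fix a reference state, say $0$, and let $h(x) := \Pr_x(\tau_0 < \infty)$ be the return probability. On $\{1,2,\ldots\}$ the function $h$ is harmonic for the sub-Markovian kernel of $X$ killed at $0$, so we may define the $h$-transformed transition probabilities $\widehat p(x,y) := p(x,y) h(y)/h(x)$. Under $\widehat\Pr_x$ the chain $\widehat X$ is exactly $X$ conditioned on $\{\tau_0 < \infty\}$, and $T_\beta(0) < \infty$ is equivalent to $\widehat\Exp_x[\tau_0^\beta] < \infty$ for the conditioned chain started near $0$; passing moments of hitting times for a Lamperti chain through Lamperti's recurrence/transience criteria then yields the threshold.

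**Key steps.** First I would show that $\widehat X$ is again a Lamperti-type chain and identify its parameters. This requires the asymptotic expansion, announced in the abstract, for $h(x\pm 1)/h(x)$ (or more generally $h(x+k)/h(x)$ for $|k|\le B$) as $x\to\infty$: one expects $h$ to be regularly varying with some index $\gamma < 0$ determined by $c$ and $s^2$, with the heuristic $h(x) \asymp x^{1 - 2c/s^2}$ coming from the diffusion approximation (the scale function of the limiting Bessel-type process). Granting $h(x+k)/h(x) = 1 + k\gamma/x + o(1/(x\log x))$ with $\gamma = 1 - 2c/s^2 < 0$, a direct computation of $\widehat\mu_1(x) = \sum_k k\, \widehat p(x,x+k)$ and $\widehat\mu_2(x) = \sum_k k^2 \widehat p(x,x+k)$ gives
\[
\widehat\mu_1(x) = \mu_1(x) + \frac{\gamma}{x}\,\mu_2(x) + o\!\left(\frac{1}{x\log x}\right) = \frac{c + \gamma s^2}{x} + o\!\left(\frac{1}{x\log x}\right), \qquad \widehat\mu_2(x) = s^2 + o\!\left(\frac{1}{\log x}\right),
\]
so $\widehat X$ satisfies~\eqref{ass:lamperti-transience} with $c$ replaced by $\widehat c := c + \gamma s^2 = c + s^2 - 2c = s^2 - c$, and $s^2$ unchanged. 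Since $2c > s^2$, we get $2\widehat c = 2s^2 - 2c < s^2$, confirming $\widehat X$ is recurrent, as it must be (the conditioned process returns to $0$ a.s.). Second, for the recurrent chain $\widehat X$ one needs the sharp moment classification for return times: under~\eqref{ass:bounded-jumps}, \eqref{ass:irreducibility}, and the Lamperti condition with parameters $\widehat c, s^2$ satisfying $2\widehat c < s^2$ (so null-recurrent when $2\widehat c \ge -s^2$, i.e. always here since $\widehat c = s^2 - c$ could be negative), the $p$-th moment of the return time to a finite set is finite iff $p < \tfrac12(1 - 2\widehat c/s^2)$ and infinite iff $p > \tfrac12(1 - 2\widehat c/s^2)$; this is the standard Lamperti/Aspandiiarov–Iasnogorodski–Menshikov Lyapunov-function machinery applied to test functions $x \mapsto x^{2p}$-type (semimartingale decomposition of $\widehat X_n^{2p}$ and optional stopping / Foster–Lyapunov moment bounds). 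Plugging $\widehat c = s^2 - c$ gives the critical exponent $\tfrac12(1 - 2(s^2-c)/s^2) = \tfrac12(2c/s^2 - 1) = (c - \tfrac12 s^2)/s^2$, and $T_\beta(0) < \infty$ corresponds to $\beta$ below this value, i.e. $\beta s^2 < c - \tfrac12 s^2$, that is $2c > (2\beta+1)s^2$; the reverse inequality $2c < (2\beta + 1)s^2$ gives $\beta$ above the critical exponent and hence $T_\beta(0) = \infty$. Third, I would use Lemma~\ref{lem:equivalence} to upgrade $\beta$-strong transience from the single state $0$ to the full equivalence, and to translate between $T_\beta$, $L_\beta$, $U_\beta$ as convenient.

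**Main obstacle.** The crux is Step~1: establishing the asymptotic expansion $h(x+k)/h(x) = 1 + k\gamma/x + o(1/(x\log x))$ with enough precision to recover the $o(1/(x\log x))$ error required by~\eqref{ass:lamperti-transience} for $\widehat X$. A crude estimate $h(x) = x^{\gamma + o(1)}$ (regular variation) is not enough — one needs the ratio at nearby points to second order. The natural route is a renewal-type / potential-theoretic argument: write $h$ via the last-exit or Green's function identity and exploit that $h$ is harmonic off $0$, reducing the problem to controlling a discrete second-order difference equation $\sum_k p(x,x+k)(h(x+k) - h(x)) = 0$; expanding and using boundedness of increments gives $\tfrac12 \mu_2(x) h''(x) + \mu_1(x) h'(x) \approx 0$ heuristically, whose solution behaves like $x^{1 - 2c/s^2}$, but making this rigorous with the stated error term — and ruling out oscillatory corrections — is delicate and is precisely where the careful analysis of the excerpt's "ratio of two return probabilities" must do its work. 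A secondary technical point is ensuring the moment classification of Step~2 holds with the matching constants in both the "finite" and "infinite" directions (sharp constants in Lyapunov-function criteria), and handling the boundary behaviour of $\widehat X$ near $0$ where $h$ is not given by the asymptotic formula; the uniform irreducibility~\eqref{ass:irreducibility} and bounded jumps~\eqref{ass:bounded-jumps} should absorb these boundary effects.
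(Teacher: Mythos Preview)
Your approach matches the paper's exactly: Doob $h$-transform at state $0$, identify the conditioned chain as Lamperti with parameter $\widehat c = s^2 - c$, then invoke the Aspandiiarov--Iasnogorodski--Menshikov passage-time moment criteria (Theorem~3.2.6 of~\cite{mpw}), arriving at precisely the threshold you compute.

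One correction worth flagging: you do not need, and the paper does not obtain, the error $o(1/(x\log x))$ in the ratio expansion for $h(x+k)/h(x)$. The paper proves only $h(x+z)/h(x) = 1 - \gcr z/x + o(1/x)$ (Theorem~\ref{thm:h-estimate}), which yields $\widehat\mu_1(x) = (s^2-c)/x + o(1/x)$ and $\widehat\mu_2(x) = s^2 + o(1)$ --- strictly weaker than assumption~\eqref{ass:lamperti-transience}. This suffices because the passage-time moment criterion of~\cite{aim,mpw} only requires the limits $\lim_{x\to\infty} x\widehat\mu_1(x)$ and $\lim_{x\to\infty}\widehat\mu_2(x)$ to exist, not the sharper $o(1/(x\log x))$ control. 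The loss of a $\log$ factor in the error (which the paper attributes to the coupling argument used to control the entrance distribution into an interval when jumps are not skip-free) is exactly why the boundary case $2c = (2\beta+1)s^2$ is left open; see Remark~\ref{rems:lamp-st}\ref{rems:lamp-st-a}. So your ``main obstacle'' is real, but slightly less demanding than you state.
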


Probably the simplest example to illustrate Theorem~\ref{thm:lamp-st}
is that where $X$ is an irreducible nearest-neighbour random walk (or \emph{birth-and-death} chain)
for which
\[ \Pr ( X_{n+1} = x \pm 1 \mid X_n = x ) = \frac{1}{2} \pm \frac{c}{2x} , \text{ for all } x \in \N \text{ with } x > |c|. \]
For this example, $\mu_1 (x) = c/x$ and $\mu_2 (x) = 1$, so~\eqref{ass:lamperti-transience}
holds with $s^2 = 1$ provided $c > 1/2$ (the transient case). Theorem~\ref{thm:lamp-st}
says that $X$ is $\beta$-strong transient
if $2c > 2\beta +1$ and not $\beta$-strong transient if $2c < 2\beta +1$.
While the assumption~\eqref{ass:bounded-jumps}
is an obstacle to some applications (see Remark~\ref{rems:lamp-st}\ref{rems:lamp-st-d}),
Lamperti processes with bounded jumps have found recent applications
in  survival and dominance of agents trading in 
financial markets, and
in polymer pinning and wetting models from statistical physics
(see, respectively, \cite{bdindo} and~\cite{alexander}, and references therein).

Before making some more detailed comments on Theorem~\ref{thm:lamp-st}
(in Remarks~\ref{rems:lamp-st} below), we outline the scheme of proof, around which the rest of this paper is constructed. 
The first step in the proof of Theorem~\ref{thm:lamp-st} is to define, via an appropriate Doob $h$-transform, a version of the Markov chain~$X$ conditioned to return to~$0$. This construction is presented in 
Section~\ref{sec:doob}, and is based on a modulation of the transition
probabilities according to the function $h(x)$, which is the probability that the (transient) Markov chain $X$ visits $0$ in finite time, started from $x \in S$. Analysis of the behaviour of the conditioned Markov chain requires a rather precise study of the hitting-probability function~$h$. There are two main component results in this direction in the body of the paper, which are of some independent interest, but whose formal statement we defer until later, after the necessary definitions and notation.
\begin{itemize}
    \item Theorem~\ref{thm:h-estimate} presents an asymptotic estimate for $h(x+z)/h(x)$ for fixed $z$ as $x \to \infty$. 
    \item Theorem~\ref{thm:conditional-moments} establishes that the conditioned process is itself a sort of Lamperti process, satisfying a version of~\eqref{ass:lamperti-transience} 
    with appropriately transformed increment moment parameters, but losing a factor of $\log x$ in the error terms. 
\end{itemize}
This loss of control in the error terms for the conditioned process is the reason for the fact that Theorem~\ref{thm:lamp-st} does not cover the boundary case where $2c = (2 \beta+1) s^2$: see also Remark~\ref{rems:lamp-st}\ref{rems:lamp-st-a}.

 The proof of Theorem~\ref{thm:lamp-st}, which is presented at the end of Section~\ref{sec:doob}, follows from Theorem~\ref{thm:conditional-moments} and estimates on passage-time moments for Lamperti processes from~\cite{aim,mpw}.
   Theorem~\ref{thm:h-estimate}, whose proof is presented in Section~\ref{sec:proofs} and contains most of the technical work of the paper, yields Theorem~\ref{thm:conditional-moments}. (A
   corollary of the ratio asymptotic  in  Theorem~\ref{thm:h-estimate} is the regular variation of the function~$h$.) 
   Section~\ref{sec:discussion}  discusses some additional context and related literature. In particular, Section~\ref{sec:random-walks} presents some comparisons between Theorem~\ref{thm:lamp-st} and strong transience results for multidimensional, homogeneous random walks; this comparison gives access to some additional intuition for the point of the phase transition exhibited in Theorem~\ref{thm:lamp-st}.
   Section~\ref{sec:branching} describes a connection between Lamperti processes and branching processes with migration,
   in which context we discuss work of Kosygina \& Zerner~\cite{kz} which complements the present work (see also Remark~\ref{rems:lamp-st}\ref{rems:lamp-st-z}). 
   Finally, Appendix~\ref{sec:appendix} gives the proof of the general Lemma~\ref{lem:equivalence} on characterization of strong transience for countable Markov chains.

\begin{remarks}
\label{rems:lamp-st}
\begin{myenumi}[label=(\alph*)]
\setlength{\itemsep}{0pt plus 1pt}
\item\label{rems:lamp-st-a}
As mentioned above, Theorem~\ref{thm:lamp-st} does not cover the boundary case where $2c = (2 \beta+1) s^2$.
Such boundary cases can be  delicate (see e.g.~\cite{mai}). A central step in the proof we present below is a coupling argument, which seems to
inherently lead to a gain of an $O (\log x)$ factor in the error terms of the increment moments of the conditioned
process compared to $\mu_1, \mu_2$ in~\eqref{ass:lamperti-transience} (cf.~Theorem~\ref{thm:conditional-moments}).
There are two cases where we believe
that  Theorem~\ref{thm:lamp-st} can be strengthened to assert that the boundary case where $2c = (2 \beta+1) s^2$ is \emph{not} $\beta$-strong transient.
The first would be to impose a stronger version of~\eqref{ass:lamperti-transience} in which the $\log x$ factor in the denominators in the $o( \, \cdot \, )$ terms is replaced by $\log^2 x$.
The second is the case of a nearest-neighbour walk, since there one can avoid the coupling step in the proof below altogether: see Remark~\ref{rems:ratio}\ref{rems:ratio-c}. In those two cases, Theorem~\ref{thm:conditional-moments} could be strengthened to achieve error terms comparable to~\eqref{ass:lamperti-transience}, and hence one could settle the boundary case where $2c = (2 \beta+1) s^2$. We do expect that the boundary case being \emph{not} strong transient is rather generic, but it is not clear to us whether the assumption~\eqref{ass:lamperti-transience} is sufficient for this in general.
\item\label{rems:lamp-st-b}
By considering $U_\beta (0) = \sum_{n \in \N} n^\beta \Pr ( X_n =0 )$,
Theorem~\ref{thm:lamp-st} would also follow from Lemma~\ref{lem:equivalence} if one established the validity of the local limit estimate
\begin{equation}
    \label{eq:llt}
 \lim_{n \to \infty} \frac{\log \Pr ( X_n = 0 )}{\log n} = - \frac{2c+s^2}{2s^2} .\end{equation}
However, the only local limit theorem results that we are aware of
assume nearest-neighbour increments, where $X_{n+1} - X_n = \pm 1$, and null recurrence.
In that setting, $s^2 =1$, and the limit theorem~\eqref{eq:llt}
is proved for $|2c| < s^2$ by Alexander~\cite[eq.~(2.19)]{alexander},
with earlier work by
Rosenkrantz~\cite[Thm.~1]{rosenkrantz} 
(in the special case $c=1/2$)
and Fal$'$~\cite[Thm.~1]{fal}.
Rosenkrantz~\cite{rosenkrantz} and Fal$'$~\cite{fal} use the Karlin--McGregor spectral representation and require a specific form for the transition probabilities, while Alexander~\cite{alexander} uses an excursion approach, which seems to not be applicable in the transient case. We are not aware of~\eqref{eq:llt} having been  established in any transient example. An heuristic explanation for~\eqref{eq:llt}
is that $\Pr ( X_n =0 )$ should be comparable to $\Pr ( \tau_0 = n )$ and $\Pr (\lambda_0 = n)$, which
Theorem~\ref{thm:lamp-st} suggests are about $n^{-q+o(1)}$ for $q = \frac{2c+s^2}{2s^2}$;
see~\cite{bd,dk} for comparison results for such quantities in the case of  some transient random walks with i.i.d.~increments.
\item\label{rems:lamp-st-z}
A study of excited random walk due to Kosygina \& Zerner~\cite{kz}
establishes criteria for strong transience for a class of
\emph{branching processes with migration},
which can be cast as Lamperti processes (see Section~\ref{sec:branching} below).
The technical approach of~\cite{kz}, like ours, uses a Doob $h$-transform,
and recognizes that conditioning retains the Lamperti-type character of the process;
on the other hand, the main line of the analysis of~\cite{kz} goes via a diffusion approximation, quite different to our method. Furthermore, in~\cite{kz} are obtained some asymptotic results on the harmonic function~$h$ that complement our Theorem~\ref{thm:h-estimate} below; see Remark~\ref{rems:ratio}\ref{rems:ratio-a}.
\item\label{rems:lamp-st-c}
A natural continuum comparison with the Lamperti process in Theorem~\ref{thm:lamp-st}
is a Bessel process with ``dimension'' (parameter) $\delta = (2c+s^2)/s^2$.
When $\delta >0$, this comparison is formalized at the level of weak convergence by classical work of Lamperti~\cite{lamp2}; see also~\cite[\S 3.11]{mpw}. The transient case is $\delta > 2$, and the fact that the Bessel process has Gamma marginals
establishes a Bessel analogue of~\eqref{eq:llt}. However, local approximation of Lamperti processes by Bessel processes has only been carried through in the nearest-neighbour case, as far as we are aware: see, e.g.~\cite{alexander}.
\item\label{rems:lamp-st-d}
It would be interesting to relax the assumption~\eqref{ass:bounded-jumps} of bounded jumps,
as this would broaden the range of applications
to include, for example, random walks on a half-strip~\cite{gw}
and branching processes with migration (see Section~\ref{sec:branching}).
We emphasize, however,
that the technical challenges in our approach that lead to the ``extra $\log$'' mentioned in
remark~\ref{rems:lamp-st-a} are likely to be more demanding with unbounded jumps.
In particular, the method of the present paper relies heavily on at least the \emph{lower} boundedness of jumps for the exponential conditional mixing estimate in Proposition~\ref{prop:coupling} below; elsewhere (for example in the Lyapunov function estimates) one should be able to assume boundedness of certain increment moments instead.
\item\label{rems:lamp-st-e}
There are other ways to quantify transience, but those that capture the fact that the process is \emph{diffusive} do not show the phase transition that appears in Theorem~\ref{thm:lamp-st}. 
The following remarks apply under the hypotheses of Theorem~\ref{thm:lamp-st} (and, in some cases, additional assumptions). 
Diffusive weak convergence results and almost-sure growth bounds that show that $X_n$ is typically about $n^{1/2}$, building on early work of Lamperti~\cite{lamp2}, can be found in~\cite[\S\S 3.9--3.11]{mpw}; iterated-logarithm results are given e.g.~in~\cite{gallardo}. Moreover, for the renewal function
$H(x) := \Exp \sum_{n=0}^\infty \1 { X_n \leq x}$,
Theorem~5 of Denisov \emph{et al.}~\cite{dkw1}
shows that $\lim_{x \to \infty} x^{-2} H(x) = \frac{1}{2c -s^2}$; see also~\cite{dkw2} for some finer results. Another way to quantify transience is via the number of \emph{cutpoints}~\cite{de,lmw}; it follows from~\cite[Thm.~1.2]{lmw} that $X$ has infinitely many cutpoints, a.s.
\item\label{rems:lamp-st-f}
Theorem~\ref{thm:lamp-st}
 shows that 
 a small correction is required in  Lemma~3.10.8 of the work~\cite{mpw} involving two of the authors here. Indeed, that result includes the incorrect assertion that the last exit time $\lambda_x$ has $\Exp \lambda_x < \infty$ for every transient Lamperti process. 
The statement of that lemma should be replaced by $\Exp \sum_{n=0}^{\infty} \1 { X_n \leq x } < \infty$ for all $x \geq 0$,
which is what is required for the proof of Theorem~3.10.1 in~\cite{mpw}. The argument in the published proof of Lemma~3.10.8 in~\cite{mpw} can readily be corrected to obtain this.
\end{myenumi}
\end{remarks}

 \section{The conditioned Markov chain}
 \label{sec:doob}
 
 Suppose that~\eqref{ass:markov} holds, and for $i,j \in S$ write 
 $p_{i,j}:= \Pr ( X_{n+1} = j \mid X_n = i ) = \Pr_i (X_1 = j)$
 for the one-step transition probabilities of $X$. Thus
for all $n \in \N$ and all $i, x_1, \ldots, x_n \in S$,
 \begin{equation}
     \label{eq:markov-law}
 \Pr_i ( X_1 = x_1, \ldots, X_n = x_n ) = p_{i,x_1} p_{x_1,x_2} \cdots p_{x_{n-1},x_n} . \end{equation}
Distinguish an arbitrary state in $S$ by $0 \in S$, and write $\tau := \inf \{ n \in \ZP : X_n = 0\}$. 
Define the hitting probabilities
 \begin{equation}
     \label{eq:h-def}  h(i) := \Pr_i ( \tau < \infty ) , \text{ for } i \in S .
     \end{equation}
 Note that $h(0) = 1$ and, in the notation at~\eqref{eq:tau-lambda}, $h(i) = \Pr_i (\tau_0 < \infty)$ for $i \in S \setminus \{0\}$.
By irreducibility, $h(i) >0$ for all $i \in S$, while, if $X$ is transient, then $h(i) < 1$ for infinitely many $i \in S \setminus \{0\}$.

 Define $\tp_{0,j} := p_{0,j}$ for all $j \in S$, and
 \begin{equation}
     \label{eq:tp}
 \tp_{i,j} := \frac{h(j)}{h(i)} p_{i,j}, \text{ for all } i \in S, j \in S \setminus \{0\}.
  \end{equation}
The function $h: S \to (0,1]$ is harmonic for $P := (p_{i,j})_{i,j \in S}$
stopped at~$0$, in the sense that
\[ h(i) = \Exp_i \sum_{j \in S} \1{ X_1 =j, \, \tau < \infty } = \sum_{j \in S} p_{i,j} h(j) , \text{ for all } i \in S \setminus \{0\}, \]
as follows from the Markov property. 
Thus $\sum_{j \in S} \tp_{i,j} =1$ for all $i \in S$,
so the $\tp_{i,j}$ define a Markov transition law with modified transition probabilities when away from~$0$.

Let $\tPr_i$ denote the law on $X$ generated by initial state $i$ and Markov transition probabilities $\tp_{i,j}$,
 i.e., for all $n \in \N$ and all $i, x_1, \ldots, x_n \in S$,
 \begin{align}
     \label{eq:doob-law}
    & \tPr_i ( X_0 =i, X_1 = x_1, \ldots, X_n = x_n )  = \tp_{i,x_1} \tp_{x_1,x_2} \cdots \tp_{x_{n-1},x_n} .\end{align}
    In particular, it follows from~\eqref{eq:doob-law}, \eqref{eq:tp} and~\eqref{eq:markov-law} that, for all $i, x_1, \ldots, x_{n-1} \in S \setminus \{ 0\}$, 
    \begin{align}
     \label{eq:doob-law2}
     & \tPr_i ( X_0 =i, X_1 = x_1, \ldots, X_n = x_n )   \nonumber\\
     & {} \qquad {} = \Pr_i ( X_0 =i, X_1 = x_1, \ldots, X_n = x_n ) \frac{h(x_n)}{h(i)} \text{ for } x_n \in S.
     \end{align}
  Let $\tExp_i$ denote the expectation corresponding to $\tPr_i$.
 The Markov law given by~\eqref{eq:doob-law} is the Doob $h$-transform
 of the law~\eqref{eq:markov-law} corresponding to $h$ given by~\eqref{eq:h-def}; it has the following conditioning interpretation.
 
 \begin{lemma}
 \label{lem:doob-h-transform}
  Suppose that~\eqref{ass:markov} holds. 
  Then $\tPr_i (\tau < \infty) = 1$ for all $i \in S$. Moreover, for every $i \in S$, under $\Pr_i$, the law of $(X_0, X_1, \ldots, X_\tau)$ given $\tau < \infty$
 is the same as the law of $(X_0, X_1, \ldots, X_\tau)$ under $\tPr_i$.
 \end{lemma}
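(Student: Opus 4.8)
The plan is to verify the two assertions directly from the definitions~\eqref{eq:tp}, \eqref{eq:doob-law}, and~\eqref{eq:doob-law2}, using the harmonicity of~$h$. For the first assertion, that $\tPr_i(\tau<\infty)=1$ for all $i\in S$, I would argue that under $\tPr_i$ the function $1/h(X_{n\wedge\tau})$ is a nonnegative martingale, hence converges a.s.; since $h<1$ infinitely often in the transient case (and is continuous on the finite excursion), the only way to have a finite limit compatible with the dynamics is to hit~$0$. More concretely, I would use the identity~\eqref{eq:doob-law2}: for fixed $n$, summing $\tPr_i(X_0=i,X_1=x_1,\ldots,X_n=x_n)$ over all paths with $x_1,\ldots,x_n\in S\setminus\{0\}$ gives $\tPr_i(\tau>n)=\tExp_i[\mathbbm{1}\{\tau>n\} h(X_n)^{-1}] h(i)$; wait — more carefully, one gets $\tPr_i(\tau>n) = \sum_{x_1,\ldots,x_n\neq 0} \Pr_i(X_1=x_1,\ldots,X_n=x_n)\frac{h(x_n)}{h(i)} = \frac{1}{h(i)}\Exp_i[h(X_n)\mathbbm{1}\{\tau>n\}]$. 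By bounded convergence (since $0<h\le 1$) and the fact that on $\{\tau=\infty\}$ the transient chain has $X_n\to\infty$ so $h(X_n)\to 0$ along a.e.\ such path (this requires $h(x)\to 0$ as $x\to\infty$, which holds because the transient chain drifts to $+\infty$ and $h(i)<1$ i.o.; alternatively use that $h(X_{n\wedge\tau})$ is a bounded $\Pr_i$-martingale converging to $\mathbbm{1}\{\tau<\infty\}$), we obtain $\tPr_i(\tau>n)=\frac{1}{h(i)}\Exp_i[h(X_n)\mathbbm{1}\{\tau>n\}]\to \frac{1}{h(i)}\Exp_i[\mathbbm{1}\{\tau=\infty\}\cdot 0]=0$. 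Hence $\tPr_i(\tau<\infty)=1$.

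For the second assertion, I would show that the two laws of the stopped path $(X_0,X_1,\ldots,X_\tau)$ agree on cylinder events. Fix $i\in S$ and a path $i=x_0,x_1,\ldots,x_{k-1},x_k=0$ with $x_1,\ldots,x_{k-1}\in S\setminus\{0\}$ and $p_{x_{j-1},x_j}>0$ for each $j$ (so this is a possible first excursion ending at~$0$ at time~$k$). Under $\tPr_i$, the probability of this path is, by~\eqref{eq:doob-law},
\[
\tp_{i,x_1}\tp_{x_1,x_2}\cdots\tp_{x_{k-2},x_{k-1}}\tp_{x_{k-1},0}
= \Bigl(\prod_{j=1}^{k-1}\frac{h(x_j)}{h(x_{j-1})}p_{x_{j-1},x_j}\Bigr)\cdot\frac{h(0)}{h(x_{k-1})}p_{x_{k-1},0},
\]
where I used $\tp_{x_{k-1},0}=\frac{h(0)}{h(x_{k-1})}p_{x_{k-1},0}$ — here one must be slightly careful, since~\eqref{eq:tp} is stated for $j\in S\setminus\{0\}$; but the harmonicity computation shows $\tp_{i,0}:=\frac{h(0)}{h(i)}p_{i,0}$ is exactly the mass assigned to reaching~$0$ so that $\sum_j\tp_{i,j}=1$, so this is consistent. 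The product telescopes: all the interior $h(x_j)$ cancel, leaving $\frac{h(0)}{h(i)}\prod_{j=1}^{k}p_{x_{j-1},x_j} = \frac{1}{h(i)}\Pr_i(X_1=x_1,\ldots,X_k=0)$, using $h(0)=1$. On the other hand, under $\Pr_i$, the conditional probability of this path given $\{\tau<\infty\}$ is $\frac{1}{\Pr_i(\tau<\infty)}\Pr_i(X_1=x_1,\ldots,X_k=0)=\frac{1}{h(i)}\Pr_i(X_1=x_1,\ldots,X_k=0)$, which matches. Since the stopped path is determined by such finite excursions, the two laws coincide.

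The main obstacle, such as it is, is the first assertion: establishing $\tPr_i(\tau<\infty)=1$ cleanly requires knowing that $h(X_n)\to 0$ $\Pr_i$-a.s.\ on $\{\tau=\infty\}$, or equivalently that $h(X_{n\wedge\tau})$ is a uniformly integrable (here bounded) $\Pr_i$-martingale with limit $\mathbbm{1}\{\tau<\infty\}$. The cleanest route is to note that $M_n:=h(X_{n\wedge\tau})$ is a bounded $\Pr_i$-martingale (harmonicity of~$h$ away from~$0$, and $h(0)=1$ makes~$0$ absorbing for the stopped chain), hence $M_n\to M_\infty$ a.s.\ and in $L^1$; identifying $M_\infty=\mathbbm{1}\{\tau<\infty\}$ follows because on $\{\tau<\infty\}$ eventually $M_n=h(0)=1$, while on $\{\tau=\infty\}$ one has $\Exp_i[M_\infty]=\Exp_i[M_0]=h(i)=\Pr_i(\tau<\infty)=\Exp_i[\mathbbm{1}\{\tau<\infty\}]$ forces $M_\infty=0$ a.s.\ on the complementary event (as $M_\infty\ge 0$ and the two nonnegative quantities have equal expectation and agree on $\{\tau<\infty\}$). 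Feeding this into the displayed computation $\tPr_i(\tau>n)=\frac{1}{h(i)}\Exp_i[h(X_n)\mathbbm{1}\{\tau>n\}]=\frac{1}{h(i)}\Exp_i[M_n\mathbbm{1}\{\tau>n\}]\to 0$ by dominated convergence completes it. The telescoping identity for the second part is then entirely routine.
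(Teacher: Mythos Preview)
Your proof is correct. For the second assertion, your telescoping computation on complete first-passage paths is essentially identical to the paper's argument (the paper also records the analogous identity for incomplete paths, your~\eqref{eq:doob-probs-1}, but as you note, since both laws are supported on $\{\tau<\infty\}$ the complete-excursion probabilities suffice).

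For the first assertion, your route is genuinely different and more elaborate than the paper's. You compute $\tPr_i(\tau>n)=\frac{1}{h(i)}\Exp_i[h(X_n)\1{\tau>n}]$ and then invoke a martingale argument to identify the limit of $h(X_{n\wedge\tau})$ as $\1{\tau<\infty}$, concluding by dominated convergence. This is correct (and your martingale argument is the right way to do it; your first-mentioned alternative, that $h(x)\to 0$ as $x\to\infty$, tacitly assumes $S=\ZP$ and is not available under~\eqref{ass:markov} alone). The paper instead computes $\tPr_i(\tau<\infty)$ directly, by summing the \emph{same} telescoping identity you use for the second part over all first-passage paths: $\tPr_i(\tau<\infty)=\frac{1}{h(i)}\sum_n\sum_{x_1,\ldots,x_{n-1}\neq 0}\Pr_i(X_1=x_1,\ldots,X_{n-1}=x_{n-1},X_n=0)=\frac{1}{h(i)}\Pr_i(\tau<\infty)=1$. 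So the paper obtains both assertions from one computation, avoiding the martingale machinery entirely. Your approach buys a bit more: it identifies the a.s.\ limit of $h(X_{n\wedge\tau})$ under $\Pr_i$, which is of independent interest, but for the bare statement of the lemma the paper's direct summation is shorter.
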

 
 Lemma~\ref{lem:doob-h-transform} allows us to interpret $\beta$-strong transience for $X$ under the original measure~$\Pr_i$ in terms of expected return times under the transformed measure~$\tPr_i$, as expressed in the following corollary.
 
 \begin{corollary}
 \label{cor:conditioning}
   Let $\beta >0$ and $i \in S$. 
 Then $\Exp_i [ \tau^\beta \1 { \tau < \infty }  ] = h(i) \tExp_i [ \tau^\beta ]$.
 \end{corollary}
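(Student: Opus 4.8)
The plan is to deduce the identity directly from Lemma~\ref{lem:doob-h-transform}, with a short truncation argument to cover the case of infinite moments. The key preliminary observation is that, on the event $\{\tau < \infty\}$, the value of $\tau$ is a measurable function of the stopped trajectory $(X_0, X_1, \ldots, X_\tau)$: indeed, $\tau$ is just the index of the final (equivalently, only) term of this finite sequence that equals $0$. Hence $\tau^\beta \1{\tau < \infty}$ can be written as $g(X_0, \ldots, X_\tau) \1{\tau < \infty}$ for a fixed real-valued function $g$ on finite sequences in $S$, and likewise for the truncation below. This is what allows the conditioning statement of Lemma~\ref{lem:doob-h-transform}, which is phrased in terms of the path $(X_0, \ldots, X_\tau)$, to be applied to the integrand $\tau^\beta$.

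First I would fix $K \in \N$ and work with the bounded functional $(\tau^\beta \wedge K)\1{\tau < \infty}$. Recalling from~\eqref{eq:h-def} that $\Pr_i (\tau < \infty) = h(i)$, and using Lemma~\ref{lem:doob-h-transform}, which says that under $\Pr_i ( \, \cdot \mid \tau < \infty)$ the stopped path has the same law as under $\tPr_i$ (and that $\tPr_i(\tau<\infty)=1$, so $(\tau^\beta\wedge K)\1{\tau<\infty}$ is $\tPr_i$-a.s.\ equal to the corresponding functional of the stopped path), one obtains
\[
\Exp_i \bigl[ (\tau^\beta \wedge K) \1{\tau < \infty} \bigr] = \Pr_i (\tau < \infty) \, \Exp_i \bigl[ \tau^\beta \wedge K \bigm| \tau < \infty \bigr] = h(i) \, \tExp_i \bigl[ \tau^\beta \wedge K \bigr] .
\]
Then I would let $K \to \infty$: on the left, $(\tau^\beta \wedge K)\1{\tau<\infty} \uparrow \tau^\beta \1{\tau<\infty}$ pointwise, while on the right $\tau^\beta \wedge K \uparrow \tau^\beta$, so two applications of the monotone convergence theorem give $\Exp_i [\tau^\beta \1{\tau<\infty}] = h(i) \tExp_i[\tau^\beta]$ as an identity in $[0, +\infty]$, which is the claim.

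There is essentially no substantive obstacle: the entire content lies in Lemma~\ref{lem:doob-h-transform}, and the only two points needing (minor) care are the measurability remark that $\tau$ is recoverable from the stopped path, and the truncation, which makes the argument valid even when one (hence, by the identity, the other) of the two moments is infinite. If one is content to assume a priori that $\tExp_i[\tau^\beta] < \infty$ (which is the situation in which this corollary is applied), the truncation step can be dispensed with and the identity read off in one line.
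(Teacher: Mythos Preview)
Your argument is correct and is exactly the intended derivation: the paper does not give a separate proof of the corollary, treating it as an immediate consequence of Lemma~\ref{lem:doob-h-transform}, and what you have written is precisely the careful unpacking of that implication (the measurability remark and the monotone-convergence truncation being the only details one might wish to make explicit).
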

 
 \begin{proof}[Proof of Lemma~\ref{lem:doob-h-transform}.]
Clearly $\tPr_0 ( \tau < \infty) =1$. For $i \in S \setminus \{0\}$,
\begin{align*}
    \tPr_i ( \tau < \infty) & = \sum_{n \in \N} \sum_{x_1, \ldots, x_{n-1} \in S \setminus \{0\} } \tPr_i ( X_1 = x_1, \ldots, X_{n-1} = x_{n-1}, X_n = 0 ) \\
    & = \frac{1}{h(i)} \sum_{n \in \N} \sum_{x_1, \ldots, x_{n-1} \in S \setminus \{0\} } \Pr_i ( X_1 = x_1, \ldots, X_{n-1} = x_{n-1}, X_n = 0 ),
\end{align*}
using~\eqref{eq:doob-law2} and the fact that $h(0)=1$. But for $i \in S \setminus \{0\}$,
\[   \sum_{n \in \N} \sum_{x_1, \ldots, x_{n-1} \in S \setminus \{0\} } \Pr_i ( X_1 = x_1, \ldots, X_{n-1} = x_{n-1}, X_n = 0 ) = \Pr_i ( \tau < \infty ) = h(i) ,\]
by~\eqref{eq:h-def}, 
and so we have shown that $\tPr_i (\tau < \infty) = 1$. For the final statement in the lemma, it suffices to prove that, for any $n \in \ZP$ and any $x_1, \ldots, x_n \in S \setminus \{0\}$,
 \begin{align}
 \label{eq:doob-probs-1}
     \Pr_i ( \cap_{i=1}^n \{ X_i = x_i \} \mid \tau < \infty ) & = \tPr_i ( \cap_{i=1}^n \{ X_i = x_i \} ) ; \\
 \label{eq:doob-probs-2}
     \Pr_i ( \cap_{i=1}^n \{ X_i = x_i \} \cap \{ X_{n+1} = 0 \} \mid \tau < \infty ) & = \tPr_i ( \cap_{i=1}^n \{ X_i = x_i \} \cap \{ X_{n+1} = 0 \} ) .
 \end{align}
 Here we have that, by the strong Markov property under $\Pr_i$,
 for $x_1, \ldots, x_n \neq 0$, 
 \begin{align*}
          h(i) \Pr_i ( X_1 = x_1, \ldots, X_n = x_n \mid \tau < \infty ) & = \Pr_i ( X_1 = x_1, \ldots, X_n = x_n , \tau < \infty ) \\
& = \Pr_i ( X_1 = x_1, \ldots, X_n = x_n ) h (x_n)  \\
          & = h(i) \tp_{i,x_1} \cdots \tp_{x_{n-1},x_n} , 
 \end{align*}
 by~\eqref{eq:doob-law} and~\eqref{eq:doob-law2}, 
and this yields~\eqref{eq:doob-probs-1}. Similarly,
  \begin{align*}
          h(i) \Pr_i ( X_1 = x_1, \ldots, X_n = x_n, X_{n+1} = 0 \mid \tau < \infty ) & = \Pr_i ( X_1 = x_1, \ldots, X_n = x_n , X_{n+1} = 0 ) \\
          & = h(i) \tp_{i,x_1} \cdots \tp_{x_{n-1},x_n} \tp_{x_n,0}, 
 \end{align*}
 using the fact that $h(0) = 1$, giving~\eqref{eq:doob-probs-2}.
 \end{proof}
 
 Now we return to the case where $X$ is a Lamperti process on $S = \ZP$,
 as in Section~\ref{sec:lamperti}. 
 In view of Corollary~\ref{cor:conditioning}, we will study the $\beta$-strong transience of $X$ under $\Pr_i$ by studying the 
  conditioned version of $X$ under $\tPr_i$. 
 Since $h(i) >0$ for all $i \in \ZP$, it follows from~\eqref{eq:tp}
 that $\tp_{i,j} >0$ whenever $p_{i,j} >0$, and hence
  irreducibility under $p_{i,j}$ implies irreducibility under $\tp_{i,j}$. 
Similarly, under condition~\eqref{ass:bounded-jumps},
 it is the case that $\tPr_x ( | X_1 - X_0 | \leq B ) =1$
 also holds for the conditioned process. 
   For $k \in \N$, define
 the increment moment functions corresponding to the conditioned process by
 \begin{equation}
    \label{eq:tmu-k-def}
    \tmu_k (x) := \tExp_x [ (X_{1} - X_0 )^k ] = \sum_{z =- x \wedge B}^B \frac{z^k h(x+z) p_{x,x+z}}{h(x)}, \text{ for } x \in \ZP.
\end{equation}
Recall from Corollary~\ref{cor:conditioning} that $\beta$-strong transience of the $\Pr$ process equates to existence of $\beta$-moments of return times for the $\tPr$ process.
The main part of the proof of Theorem~\ref{thm:lamp-st}
 will be provided by Theorem~\ref{thm:conditional-moments} below,
 which shows that the $\tPr$ process is itself of Lamperti type,
 but with transformed parameters (such that it is recurrent, rather than transient),
 and with weaker control of the error terms compared to~\eqref{ass:lamperti-transience}.

\begin{theorem}
\label{thm:conditional-moments}
Suppose that~\eqref{ass:markov} holds for $S= \ZP$, and that~\eqref{ass:bounded-jumps}, \eqref{ass:irreducibility}, and~\eqref{ass:lamperti-transience} hold. 
Then $\tPr := ( \tPr_x )_{x \in \ZP}$ defines an irreducible Markov chain on $\ZP$ with uniformly bounded increments, for which
\begin{equation}
    \label{eq:tmu-asymptotics}
 \tmu_1 (x) = \frac{s^2-c+o(1)}{x} , \text{ and } \tmu_2 (x)  = s^2 + o ( 1  ) , \text{ as }  x \to \infty. \end{equation}
\end{theorem}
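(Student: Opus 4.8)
The plan is to derive Theorem~\ref{thm:conditional-moments} as a short consequence of Theorem~\ref{thm:h-estimate}. The assertions that the conditioned chain is irreducible and has $|X_1-X_0|\le B$ under $\tPr$ have already been recorded in the discussion preceding the theorem (they follow from~\eqref{eq:tp}, the strict positivity of $h$, and~\eqref{ass:bounded-jumps}), so the task reduces to evaluating $\tmu_1(x)$ and $\tmu_2(x)$ via the exact identity~\eqref{eq:tmu-k-def} and inserting the ratio asymptotics for~$h$.

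First I would fix the constant $\rho := 1 - 2c/s^2$ appearing in Theorem~\ref{thm:h-estimate} (equivalently, the index of regular variation of~$h$); note that $\rho < 0$ since~\eqref{ass:lamperti-transience} gives $2c > s^2$, consistent with $h(x) \to 0$. For $x$ large enough that $x \wedge B = B$, the sum in~\eqref{eq:tmu-k-def} runs over the fixed finite set $z \in \{-B,\ldots,B\}$, and for each such $z$ Theorem~\ref{thm:h-estimate} gives
\[ \frac{h(x+z)}{h(x)} = 1 + \frac{\rho z}{x} + o(1/x) \quad\text{as } x\to\infty, \]
with error uniform over this finite range. Substituting into~\eqref{eq:tmu-k-def} and using $\sum_z |z|^k p_{x,x+z} \le B^k$ (for $k=1,2$) to absorb the error terms yields
\[ \tmu_1(x) = \mu_1(x) + \frac{\rho}{x}\,\mu_2(x) + o(1/x), \qquad \tmu_2(x) = \mu_2(x) + \frac{\rho}{x}\,\mu_3(x) + o(1/x) . \]

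Now I would substitute the Lamperti hypotheses~\eqref{ass:lamperti-transience}. From $\mu_1(x) = c/x + o(1/(x\log x))$ and $\mu_2(x) = s^2 + o(1/\log x)$ it follows that $\tmu_1(x) = (c + \rho s^2)/x + o(1/x)$, and the identity $c + \rho s^2 = c + (1 - 2c/s^2)s^2 = s^2 - c$ gives the first assertion of~\eqref{eq:tmu-asymptotics}. For the second, \eqref{ass:bounded-jumps} forces $|\mu_3(x)| \le B^3$, so $\frac{\rho}{x}\,\mu_3(x) = O(1/x)$, and with $\mu_2(x) = s^2 + o(1)$ this gives $\tmu_2(x) = s^2 + o(1)$. (As a consistency check, $2(s^2-c) < s^2$, so the conditioned chain sits strictly inside the recurrent Lamperti regime, as it must.)

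Granting Theorem~\ref{thm:h-estimate}, I do not expect any genuine obstacle here: the argument will be pure bookkeeping of error terms, with the real difficulty of the paper residing in Theorem~\ref{thm:h-estimate} itself. Two points will still need attention. First, the transformed drift coefficient $s^2 - c$ depends on the \emph{exact} value of the regular-variation index, $1 - 2c/s^2$, and not merely on $h$ being regularly varying, so I would need to read this off cleanly from Theorem~\ref{thm:h-estimate}. Second, the error rates in~\eqref{eq:tmu-asymptotics} come out only as $o(1/x)$ and $o(1)$ --- a factor $\log x$ weaker than those in~\eqref{ass:lamperti-transience} --- a loss inherited directly from the error term in the $h$-ratio asymptotics, which itself traces back to the coupling step in the proof of Theorem~\ref{thm:h-estimate}; it is precisely this degradation that stops Theorem~\ref{thm:lamp-st} from resolving the boundary case $2c=(2\beta+1)s^2$.
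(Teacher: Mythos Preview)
Your proposal is correct and follows essentially the same route as the paper: substitute the ratio asymptotic from Theorem~\ref{thm:h-estimate} into~\eqref{eq:tmu-k-def}, collect terms, and read off the transformed drift. The only cosmetic difference is notational --- the paper writes the ratio as $1 - \gcr z/x + o(1/x)$ with $\gcr = (2c-s^2)/s^2$, whereas you use $\rho = -\gcr$ --- and your version is slightly more explicit in handling $\tmu_2$ via the bounded $\mu_3$ term.
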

 \begin{remark}
  From~\eqref{eq:tmu-asymptotics} we have that $\lim_{x \to \infty} ( 2 x  \tmu_1 (x) - \tmu_2 (x) ) = s^2 - 2 c$. 
 Note that the quantity $s^2 -2c$ is the negative
 of the corresponding quantity for the unconditioned process, namely
 $\lim_{x \to \infty} ( 2 x  \mu_1 (x) - \mu_2 (x) ) = 2c -s^2$.
This sign change exactly agrees with the Lamperti phase transition~\cite[Thm.~3.1, p.~320]{lamp1}, since the process under $\Pr$ is transient and under $\tPr$ is recurrent. The critical case $2c=s^2$ is null recurrent under~\eqref{ass:lamperti-transience}. 
 \end{remark}
 
 We prove Theorem~\ref{thm:conditional-moments} in Section~\ref{sec:proofs};
 taking Theorem~\ref{thm:conditional-moments} as given for now, we can complete the proof of Theorem~\ref{thm:lamp-st}.
 
  \begin{proof}[Proof of Theorem~\ref{thm:lamp-st}.]
 By Corollary~\ref{cor:conditioning} and Lemma~\ref{lem:equivalence}, we have that $X$ is $\beta$-strong transient if and only if $\tExp_i [ \tau^\beta ] < \infty$ for some (hence every) $i \in S \setminus \{0\}$.
 Theorem~3.2.6 of~\cite{mpw} (which  specializes to the case of
  Markov chains on $\ZP$ results of~\cite{aim}) shows that sufficient for $\tExp_i [ \tau^\beta ] < \infty$ is
$ 2a + (2\beta-1) b < 0$,
where $a := \lim_{x\to\infty} x \tmu_1 (x)$
and $b: = \lim_{x \to \infty} \tmu_2 (x)$,
while sufficient for $\tExp_i [ \tau^\beta ] = \infty$
is $2a + (2\beta-1) b > 0$.
With Theorem~\ref{thm:conditional-moments}, this means we have $\beta$-strong transience if $2 (s^2 -c) + (2\beta -1) s^2 < 0$, and we do not have $\beta$-strong transience if $2 (s^2 -c) + (2\beta -1) s^2 > 0$. This establishes Theorem~\ref{thm:lamp-st}.
 \end{proof}

 \section{Ratio expansions for return probabilities}
 \label{sec:proofs}
 
 The main aim of this section is to prove Theorem~\ref{thm:conditional-moments}; to do so we study in more detail the return-probability function~$h$ defined at~\eqref{eq:h-def}. 
In order to estimate $\tmu_1(x)$ and $\tmu_2(x)$ in Theorem~\ref{thm:conditional-moments}, we need in~\eqref{eq:tmu-k-def} to have estimates for $h(x+z)/h(x)$, at least for $|z| \leq B$.
Theorem~\ref{thm:h-estimate} below presents the central estimate.
As a consequence, we deduce (in Corollary~\ref{cor:reg-var} below) that the function~$h$ is regularly varying and eventually decreasing, which is a result of some independent interest.

For $c, s^2$ the parameters in~\eqref{ass:lamperti-transience}, define the critical exponent
\begin{equation}
    \label{eq:gamma-c}
    \gcr := \frac{2c -s^2}{s^2}.
\end{equation}
Note that, by~\eqref{ass:lamperti-transience}, we have $\gcr \in (0,\infty)$.

 \begin{theorem}
 \label{thm:h-estimate}
 Suppose that~\eqref{ass:markov} holds for $S= \ZP$, and that~\eqref{ass:bounded-jumps}, \eqref{ass:irreducibility}, and~\eqref{ass:lamperti-transience} hold. 
 We have, uniformly in $|z| \leq B$, as $x \to \infty$, 
 \begin{equation}
     \label{eq:h-ratio}
 \frac{h(x+z)}{h(x)} = 1 - \frac{\gcr z}{x} + o (x^{-1} ) . \end{equation}
 \end{theorem}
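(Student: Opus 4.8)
The plan is to estimate $h(x+z)/h(x)$ for $|z|\le B$ by relating the return probability started from $x+z$ to the return probability started from $x$ via a one-step decomposition, and then iterating/bootstrapping a rough bound into the sharp asymptotic. Write $H(x) := 1 - h(x)$ for the escape probability, which is positive and, by transience, tends to a positive limit as $x\to\infty$; more precisely Lamperti-type estimates (e.g.\ the passage-time and submartingale arguments in \cite{mpw}) show $h(x)\to 0$, and indeed that $h$ is, up to the error we are trying to pin down, comparable to $x^{-\gcr}$. First I would establish the crude two-sided bound $h(x) = x^{-\gcr + o(1)}$, using the Lyapunov function $f(x) = x^{-\gcr}$: under~\eqref{ass:lamperti-transience} a Taylor expansion gives $\Exp[f(X_{n+1})-f(X_n)\mid X_n=x] = o(x^{-1-\gcr}\cdot\text{(something)})$ of the right sign, so $f(X_{n\wedge\tau})$ is an (approximate) supermartingale/submartingale, and optional stopping together with the bounded-jumps assumption~\eqref{ass:bounded-jumps} pins $h(x)$ between constant multiples of $x^{-\gcr\pm\epsilon}$. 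This already yields $h(x+z)/h(x) = 1 + O(z/x) + o(x^{-1+\epsilon})$, which is not quite good enough but is the scaffolding.

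The core step is to upgrade this to the precise coefficient $-\gcr z/x$. Here I would use the harmonicity of $h$ away from $0$: for $x$ large (so that $x > B$ and the chain cannot reach $0$ in one step),
\[
h(x) = \sum_{z=-B}^{B} p_{x,x+z}\, h(x+z).
\]
Introduce $g(x) := h(x+1)/h(x) - 1$ (or, better, work directly with the ratio $r_z(x) := h(x+z)/h(x)$). Dividing the harmonicity relation by $h(x)$ gives $1 = \sum_z p_{x,x+z} r_z(x)$, and the crude bound shows $r_z(x) = 1 + a_z(x)$ with $a_z(x) = O(1/x) + o(x^{-1+\epsilon})$ and $a_z(x)$ roughly additive in $z$ (since $r_{z+1}(x) = r_z(x+1)\,r_1(x)$, etc.). Substituting $r_z(x) = 1 + \theta(x) z + (\text{lower order})$ — with the linear-in-$z$ ansatz justified by the near-multiplicativity and the bounded-jump range — into $1 = \sum_z p_{x,x+z}(1 + \theta(x) z + \cdots)$ forces $0 = \theta(x)\mu_1(x) + \tfrac12\theta(x)^2\mu_2(x)(1+o(1)) + \cdots$, hence $\theta(x) \sim -2\mu_1(x)/\mu_2(x) \to -2c/s^2 \cdot x^{-1}$... and one must be careful: the correct bookkeeping must produce $\gcr = (2c-s^2)/s^2$, not $2c/s^2$, so the quadratic term in the expansion of $r_z$ in $z$ (coming from the curvature of $h$, i.e.\ the $z^2$ coefficient, which by the crude power-law bound behaves like $\tfrac12\gcr(\gcr+1)x^{-2}$) must be retained; it contributes an extra $-\tfrac12\mu_2(x)$-order term that shifts $2c$ to $2c-s^2$. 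Carefully matching the $O(1/x)$ terms in $1 = \sum_z p_{x,x+z} r_z(x)$, using $\mu_1(x) = c/x + o(1/(x\log x))$ and $\mu_2(x) = s^2 + o(1/\log x)$, yields $\theta(x) = -\gcr/x + o(1/x)$, which is~\eqref{eq:h-ratio}.

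The main obstacle, and where most of the technical work lies, is turning the heuristic "$r_z(x)$ is linear in $z$ plus a controlled quadratic term" into a rigorous, uniform-in-$z$ statement strong enough to extract the $o(x^{-1})$ error — the crude $x^{-1+\epsilon}$ error from the Lyapunov bound is far too weak, so one needs a genuine bootstrap: assume $r_z(x) = 1 - \gcr z/x + O(\psi(x))$ for some slowly-decaying $\psi$, feed it back through the harmonicity identity, and show the identity forces a smaller error, iterating until $\psi(x) = o(1/x)$. This is exactly the kind of self-improving estimate that is delicate because the error terms in~\eqref{ass:lamperti-transience} decay only like $1/(x\log x)$, so the bootstrap improves the error only by logarithmic factors per step and one must control the accumulation carefully — which is presumably why the excerpt flags "most of the technical work of the paper" as being in this proof. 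A secondary subtlety is handling the $z<0$ range near small $x$ and confirming that the harmonicity identity and all expansions are valid uniformly once $x$ exceeds a fixed threshold, for which assumptions~\eqref{ass:bounded-jumps} and~\eqref{ass:irreducibility} (the latter giving the uniform lower bound on one-step probabilities, hence on the "ellipticity" needed to invert the relation $1 = \sum_z p_{x,x+z} r_z(x)$) are essential.
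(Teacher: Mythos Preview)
Your approach has a genuine gap at the very first step. The crude Lyapunov bound $c_\eps x^{-\gcr-\eps}\le h(x)\le C_\eps x^{-\gcr+\eps}$ (or the finer $x^{-\gcr}\log^{-\eps}x\le h(x)\le x^{-\gcr}\log^{\eps}x$ of Lemma~\ref{lem:crude-bound}) does \emph{not} control the ratio $h(x+z)/h(x)$. From such two-sided envelopes one obtains only that $r_z(x)$ lies between $c'_\eps x^{-2\eps}$ and $C'_\eps x^{2\eps}$ (respectively between powers of $\log x$), which does not even give $r_z(x)\to 1$, let alone your claimed ``scaffolding'' $r_z(x)=1+O(z/x)+o(x^{-1+\eps})$. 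Pointwise bounds on $h$ say nothing about ratios of nearby values: nothing in them prevents $h$ from oscillating on the scale of a single lattice step while remaining inside the power-law envelope.

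The harmonicity bootstrap then cannot get started, and is in any case underdetermined. The identity $1=\sum_z p_{x,x+z}\,r_z(x)$ is a single scalar constraint at each $x$, satisfied equally well by $r_z\equiv 1$ (the constant harmonic function); it cannot by itself isolate the linear coefficient $\theta(x)$ in an expansion $r_z(x)=1+\theta(x)z+\phi(x)z^2+\cdots$. You notice this yourself when the naive computation produces $-2c/s^2$ instead of $-\gcr$, and you propose to repair it by inserting $\phi(x)=\tfrac12\gcr(\gcr+1)x^{-2}$ ``from the crude power-law bound''. But that value of $\phi$ is precisely what one gets from assuming $h(x)=x^{-\gcr}(1+o(1))$ with the $o(1)$ smooth to second order, which is the conclusion. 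Iterating harmonicity cannot manufacture the missing boundary information that distinguishes $h$ from the constant function.

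The paper takes an essentially different route. Rather than comparing $h(x+z)$ and $h(x)$ through their sizes, it writes each (via the strong Markov property) as the product of the probability of hitting a nearby interval $I_{a(x)}$, $a(x)=x-\lfloor A\log x\rfloor$, and a factor $\sum_{u\in I_{a(x)}}h(u)\,\theta_{a(x)}(u)$ that is \emph{common} to both and therefore cancels in the ratio. That this factor is indeed common rests on a coupling argument (Proposition~\ref{prop:coupling}) showing the conditional entrance law into $I_{a(x)}$ is exponentially insensitive to whether one starts at $x$ or $x+z$; this is where~\eqref{ass:bounded-jumps} and~\eqref{ass:irreducibility} do the real work, and it is the idea missing from your plan. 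The surviving hitting probabilities $\Pr_{x+z}(\eta_{I_{a(x)}}<\infty)$ are then estimated by optional stopping with the refined Lyapunov functions $f_{\gcr,\pm\nu}(x)=x^{-\gcr}\log^{\pm\nu}x$, whose logarithmic correction breaks the degeneracy of $x^{-\gcr}$ and yields sub/super\-martingale inequalities sharp enough to extract the coefficient $-\gcr z/x$.
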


\begin{corollary}
\label{cor:reg-var}
 Suppose that~\eqref{ass:markov} holds for $S= \ZP$, and that~\eqref{ass:bounded-jumps}, \eqref{ass:irreducibility}, and~\eqref{ass:lamperti-transience} hold. Then $h(x+1) < h(x)$ for all sufficiently large $x \in \ZP$, and there exists a function $L: \N \to (0,\infty)$, slowly varying at $\infty$, such that $h(x) = x^{-\gcr} L(x)$ for all $x \in \N$.
\end{corollary}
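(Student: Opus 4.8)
The goal is to deduce Corollary~\ref{cor:reg-var} from Theorem~\ref{thm:h-estimate}. The strategy is to read off the two conclusions — eventual monotonicity and the regular-variation representation — more or less directly from the ratio asymptotic~\eqref{eq:h-ratio}.

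\medskip

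\noindent\textit{Plan for the proof.}
First I would establish eventual monotonicity. Taking $z = 1$ in~\eqref{eq:h-ratio} gives $h(x+1)/h(x) = 1 - \gcr/x + o(x^{-1})$ as $x \to \infty$; since $\gcr > 0$ by~\eqref{eq:gamma-c} and the standing assumption~\eqref{ass:lamperti-transience}, the right-hand side is strictly less than~$1$ for all $x$ large enough, so $h(x+1) < h(x)$ eventually. Next, for the representation, I would work with the increments of $\log h$. For $x$ large, write
\[
\log h(x+1) - \log h(x) = \log\!\left( 1 - \frac{\gcr}{x} + o(x^{-1}) \right) = -\frac{\gcr}{x} + o(x^{-1}) ,
\]
using the Taylor expansion of $\log(1+u)$ and the fact that the argument tends to~$1$. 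Summing telescopically from some fixed large $N$ up to $x-1$, and comparing $\sum_{k=N}^{x-1} \gcr/k$ with $\gcr \log x$, we get $\log h(x) = -\gcr \log x + c_0 + r(x)$ where $c_0$ is a constant and $r(x) \to$ a finite limit (the tail $\sum_{k \ge x} o(k^{-1})$ being summable in the Cesàro sense need not itself converge, so more carefully one writes $\log h(x) + \gcr \log x = \log h(N) + \gcr \log N + \sum_{k=N}^{x-1}\bigl( \gcr/k - \gcr/(k+1)\text{-type corrections}\bigr)$, and the point is only that $\log h(x) + \gcr \log x$ varies slowly). Then set $L(x) := x^{\gcr} h(x)$, so that $h(x) = x^{-\gcr} L(x)$ by construction, and $L > 0$ on $\N$ since $h > 0$ everywhere by irreducibility.

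\medskip

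\noindent It remains to check that $L$ is slowly varying at $\infty$, i.e.\ that $L(\lambda x)/L(x) \to 1$ for each fixed $\lambda > 0$; since $L$ is defined on $\N$ one interprets this along integer arguments (or extends $L$ to $(0,\infty)$ by, say, linear interpolation, which does not affect slow variation). By the uniform convergence theorem for regularly varying functions it suffices to verify this for a dense set of $\lambda$, or indeed it is cleanest to show directly that $L(x+z)/L(x) \to 1$ uniformly for $|z| \le B$: indeed
\[
\frac{L(x+z)}{L(x)} = \left( \frac{x+z}{x} \right)^{\!\gcr} \frac{h(x+z)}{h(x)} = \left( 1 + \frac{z}{x} \right)^{\!\gcr}\!\left( 1 - \frac{\gcr z}{x} + o(x^{-1}) \right) = 1 + o(1),
\]
uniformly in $|z| \le B$, where the last step expands $(1 + z/x)^{\gcr} = 1 + \gcr z/x + O(x^{-2})$ and the $\gcr z/x$ terms cancel. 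Iterating this bound over blocks of length at most $B$ lets one pass from $x$ to $\lfloor \lambda x \rfloor$ in $O(x)$ steps, but with per-step multiplicative error $1 + o(1/x)$ (one needs the slightly sharper statement that $L(x+z)/L(x) = 1 + o(1/\log x)$ is \emph{not} required — the standard characterization says $L$ is slowly varying iff $\log L(x)$ has increments that are $o(1/x)$-summable, equivalently $L(e^t)$ is slowly varying as a function on $\R$, and $\sum_{k \le x} o(1/k) = o(\log x)$ suffices); so summing the increments $\log L(x+1) - \log L(x) = o(1/x)$ from $x$ to $\lambda x$ gives $\log L(\lambda x) - \log L(x) = o(\log \lambda x) - o(\log x) \to$, and one concludes slow variation via the Karamata representation, or simply by quoting the standard result that a positive measurable function whose logarithmic increments over unit steps are $o(1/x)$ is slowly varying.

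\medskip

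\noindent\textit{Main obstacle.} There is no deep obstacle here — the corollary is essentially a formal consequence of~\eqref{eq:h-ratio}. The one point requiring a little care is the bookkeeping in passing from the per-step relation $\log h(x+1) - \log h(x) = -\gcr/x + o(x^{-1})$ to the global statement $h(x) = x^{-\gcr} L(x)$ with $L$ genuinely slowly varying: one must make sure the accumulated $o(x^{-1})$ errors, summed over $\Theta(x)$ steps, are controlled so as not to spoil slow variation (they contribute $o(\log x)$ to $\log L$, which is exactly what slow variation permits), and one should handle the definition of $L$ on $\N$ versus slow variation being a statement about functions on $(0,\infty)$ by the usual interpolation/monotonicity device. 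All of this is routine given the uniform control in $|z| \le B$ supplied by Theorem~\ref{thm:h-estimate}.
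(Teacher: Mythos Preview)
Your proposal is correct and follows essentially the same route as the paper: both arguments use the $z=1$ case of~\eqref{eq:h-ratio} to obtain eventual monotonicity, then telescope $\log h(x) = \sum_{y<x} \log\bigl(h(y+1)/h(y)\bigr) = -\gcr \log x + \text{(slowly varying part)}$, and define $L(x) := x^{\gcr} h(x)$. The only cosmetic difference is that the paper verifies slow variation by writing $L(x) = \exp\bigl\{-\upsilon\gcr + o(1) + \sum_{y=1}^{x-1} o(y^{-1})\bigr\}$ and recognising this as a Karamata representation (citing~\cite{bgt}), whereas you check $L(\lambda x)/L(x) \to 1$ by hand via $\log L(x+1) - \log L(x) = o(1/x)$; these are equivalent, and your slightly hedged discussion of the accumulated $o(1/x)$ errors could be tightened to exactly the paper's one-line citation.
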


\begin{remarks}\phantomsection
\label{rems:ratio}
\begin{myenumi}[label=(\alph*)]
\setlength{\itemsep}{0pt plus 1pt}
\item\label{rems:ratio-a}
Kosygina \& Zerner~\cite{kz} obtain an asymptotic estimate for~$h$, of a somewhat different kind than that in Theorem~\ref{thm:h-estimate}, 
for a class of branching processes with migration; see Section~\ref{sec:branching} below for the translation to the Lamperti context. The assumptions in~\cite{kz} require specific structure for the process, and specified distributions of increments, but they do not (and must not) assume  bounded jumps. In our notation, Proposition~4.3 of~\cite{kz} says that $\lim_{x \to \infty} x^{\gcr} h(x)$ is a finite positive constant, and hence
\begin{equation}
    \label{eq:kz-asyptotics}
    \lim_{x \to \infty} \frac{h(\lfloor \lambda x  \rfloor)}{h(x)} = \lambda^{-\gcr}, \text{ for any } \lambda \in (0,\infty).
\end{equation}
This statement neither implies, nor is implied by, the ratio limit result~\eqref{eq:h-ratio}; it does imply regular variation of~$h$. An anonymous referee suggests the terminology that~\eqref{eq:h-ratio} be described as a \emph{local-at-infinity} result, in contrast to the non-local asymptotics~\eqref{eq:kz-asyptotics}.
\item\label{rems:ratio-b}
Another adjacent result to Corollary~\ref{cor:reg-var} is Theorem~2.20 of Denisov~\emph{et al.}~\cite{dkw-book}, which
provides upper and lower bounds for $h(x)$ in terms of a regularly-varying ``near-harmonic'' function,
under moments conditions weaker than the uniform bound~\eqref{ass:bounded-jumps}.
\item
The slowly-varying function $L$ in Corollary~\ref{cor:reg-var}   cannot be determined by asymptotic assumptions alone: it can change by a constant factor if one modifies a single transition probability near~$0$, for example.
\item\label{rems:ratio-c}
If~\eqref{ass:bounded-jumps} is augmented with the additional \emph{left continuity} (or left \emph{skip free}) assumption that $X_{n+1} - X_n \geq -1$, a.s., then the proof of Theorem~\ref{thm:h-estimate} (and hence Theorem~\ref{thm:lamp-st}) simplifies significantly. Indeed, left continuity implies that
\begin{equation}
    \label{eq:left-continuity}
 h(x+1) = \Pr_{x+1} (\tau_x < \infty ) h (x), \text{ for all } x \in \ZP, \end{equation}
 and in this case a relatively crude optional stopping argument,
 based on the fact that $X_n^{\gcr \pm \eps}$, $\eps>0$,
 is a sub/supermartingale outside a bounded set (cf.~Lemma~\ref{lem:lyapunov-function} below for a finer result) 
 is enough to show that \[ \Pr_{x+1} (\tau_x < \infty) = 1 - \frac{\gcr}{x} + o(x^{-1} ) , \text{ as } x \to \infty, \]
 which, with~\eqref{eq:left-continuity},
 yields~\eqref{eq:h-ratio}. 
\end{myenumi}
\end{remarks}
 
  \begin{proof}[Proof of Corollary~\ref{cor:reg-var}.]
 Since $h(0) = 1$, we can write
 \[ h(x) = \exp \left\{ \sum_{y=0}^{x-1} \log \left( \frac{h (y+1)}{h(y)} \right) \right\} = \exp \left\{ \sum_{y=1}^{x-1} \log \left( 1 - \frac{\gcr}{y} + o (y^{-1}) \right) \right\} ,
 \]
 by the $z=1$ case of Theorem~\ref{thm:h-estimate}. Recalling that $\sum_{y=1}^{x-1} y^{-1} = \log x + \upsilon + o(1)$, where $\upsilon \approx 0.5772$ is Euler's constant, it follows that
 $h(x) = x^{-\gcr} L(x)$, where
 \[ L(x) = \exp \left\{ - \upsilon \gcr + o(1) + \sum_{y=1}^{x-1} o (y^{-1} ) \right\} , \]
 which implies that $L$ is slowly varying~\cite[p.~12]{bgt}. Moreover, from the $z=1$ case of~\eqref{eq:h-ratio}, we have that, for all $x$ sufficiently large,
 \[ \frac{h(x+1)}{h(x)} < 1 - \frac{\gcr}{2x} ,\]
 and hence $h$ is eventually decreasing, as claimed.
 \end{proof}
 
 The rest of this section is devoted to the proof of Theorem~\ref{thm:h-estimate}.
 As described in Remark~\ref{rems:ratio}\ref{rems:ratio-c},
 this is relatively straightforward if the process is left continuous, because there is no randomness in the entrance distribution when crossing a level to the left.
 In general, the argument outlined in Remark~\ref{rems:ratio}\ref{rems:ratio-c}
 does not work as stated, but it must be improved;
 we show (Proposition~\ref{prop:coupling} below) that left-crossing distributions stabilize rapidly, using a coupling argument, and we use a refined Lyapunov function to obtain a sufficient optional stopping estimate (see Lemma~\ref{lem:eta-R-bound} below). We start by introducing the Lyapunov function that we will use.
 
 Define $\RP := [0,\infty)$. 
  For $\gamma \in \RP$ and $\nu \in \R$, define the Lyapunov function $f_{\gamma,\nu} : \RP \to (0,\infty)$ by
 \begin{equation}
     \label{eq:f-def}
 f_{\gamma,\nu} (x) := \begin{cases} x^{-\gamma} \log^\nu x &\text{if } x \geq \re ,\\
 \re^{-\gamma} & \text{if } 0 \leq x < \re .
 \end{cases}
 \end{equation}
 The next lemma demonstrates that $f_{\gamma,\nu}$
 applied to $X$ gives a sub/supermartingale when $X_n$ is outside a bounded set, for appropriate sign of $\nu$ and with $\gamma = \gcr$
 given at~\eqref{eq:gamma-c}.
 
 \begin{lemma}
 \label{lem:lyapunov-function}
  Suppose that~\eqref{ass:markov} holds for $S= \ZP$, and that~\eqref{ass:bounded-jumps} and~\eqref{ass:lamperti-transience} hold. 
 If $\nu >0$, there exists $x_0 \in \RP$ such that
 \[ \Exp \bigl[ f_{\gcr,\nu} (X_{n+1} ) -  f_{\gcr,\nu} (X_{n} ) \bigmid X_n = x \bigr] \leq 0, \text{ for all } x \geq x_0 .\]
 On the other hand,  if $\nu < 0$, there exists $x_0 \in \RP$ such that
 \[ \Exp \bigl[ f_{\gcr,\nu} (X_{n+1} ) -  f_{\gcr,\nu} (X_{n} ) \bigmid X_n = x \bigr] \geq 0, \text{ for all } x \geq x_0 .\]
 \end{lemma}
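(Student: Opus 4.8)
The plan is to compute a Taylor expansion of $\Exp[f_{\gcr,\nu}(X_{n+1}) \mid X_n = x]$ around $x$ for large $x$, using boundedness of increments~\eqref{ass:bounded-jumps} and the Lamperti asymptotics~\eqref{ass:lamperti-transience}, and to show that the leading term cancels (thanks to the choice $\gamma = \gcr$), so that the sign of the drift of $f_{\gcr,\nu}(X_n)$ is ultimately governed by the $\log^\nu$ correction, hence by the sign of $\nu$. Write $\Delta := X_{n+1}-X_n$, so that $|\Delta|\le B$ a.s.\ and $\Exp[\Delta \mid X_n = x] = \mu_1(x)$, $\Exp[\Delta^2 \mid X_n = x] = \mu_2(x)$; note also $|\Exp[\Delta^k \mid X_n = x]| \le B^k$ for all $k$. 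For $x \ge \re + B$ (so that $x$ and $x+\Delta$ are both in the smooth regime), Taylor's theorem with Lagrange remainder gives
\[
f_{\gcr,\nu}(x+\Delta) = f_{\gcr,\nu}(x) + \Delta f_{\gcr,\nu}'(x) + \tfrac12 \Delta^2 f_{\gcr,\nu}''(x) + O\bigl( |\Delta|^3 \sup_{|u|\le B} |f_{\gcr,\nu}'''(x+u)| \bigr).
\]

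The key computation is the asymptotics of the derivatives of $g(x) := x^{-\gamma}\log^\nu x$ as $x\to\infty$. One has
\[
g'(x) = x^{-\gamma-1}\log^\nu x \Bigl( -\gamma + \tfrac{\nu}{\log x}\Bigr), \qquad
g''(x) = x^{-\gamma-2}\log^\nu x\Bigl( \gamma(\gamma+1) - \tfrac{(2\gamma+1)\nu}{\log x} + O(\log^{-2} x) \Bigr),
\]
and $g'''(x) = O(x^{-\gamma-3}\log^\nu x)$. Taking conditional expectations and inserting~\eqref{ass:lamperti-transience} in the form $\mu_1(x) = \frac{c}{x} + o\bigl(\frac{1}{x\log x}\bigr)$, $\mu_2(x) = s^2 + o\bigl(\frac{1}{\log x}\bigr)$, the first-order term contributes
\[
\mu_1(x) g'(x) = x^{-\gamma-2}\log^\nu x\Bigl( -\gamma c + \tfrac{\nu c - (\text{error})}{\log x} + o\bigl(\tfrac{1}{\log x}\bigr)\Bigr),
\]
and the second-order term contributes $\tfrac12 \mu_2(x) g''(x) = x^{-\gamma-2}\log^\nu x\bigl( \tfrac12 s^2 \gamma(\gamma+1) - \tfrac{(2\gamma+1)\nu s^2}{2\log x} + o(\log^{-2}x) + o(\log^{-1}x)\bigr)$, while the remainder term is $O(x^{-\gamma-3}\log^\nu x) = o\bigl(x^{-\gamma-2}\log^{\nu-1} x\bigr)$. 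Collecting the coefficient of $x^{-\gamma-2}\log^\nu x$ (the $O(1)$ part), it equals $-\gamma c + \tfrac12 s^2\gamma(\gamma+1) = \tfrac{\gamma s^2}{2}\bigl( \gamma + 1 - \tfrac{2c}{s^2}\bigr) = \tfrac{\gamma s^2}{2}(\gamma - \gcr)$, which vanishes precisely when $\gamma = \gcr$.

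With $\gamma = \gcr$ the leading term is killed, so the dominant surviving term is the coefficient of $x^{-\gcr-2}\log^{\nu-1} x$, which one checks to be $\nu\bigl( c - \tfrac{(2\gcr+1)s^2}{2}\bigr) = \nu\bigl(c - \tfrac{2c-s^2+s^2}{2}\cdot\tfrac{s^2}{s^2}\cdot\ldots\bigr)$; more cleanly, since $2\gcr+1 = \tfrac{4c-2s^2+s^2}{s^2} = \tfrac{4c-s^2}{s^2}$, this coefficient is $\nu\bigl(c - \tfrac{4c-s^2}{2}\bigr) = \nu\cdot\tfrac{s^2-2c}{2} = -\nu\cdot\tfrac{\gcr s^2}{2}$, which is \emph{negative} when $\nu > 0$ and \emph{positive} when $\nu < 0$ (using $\gcr > 0$ from~\eqref{ass:lamperti-transience}). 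Since all the further error terms are $o\bigl(x^{-\gcr-2}\log^{\nu-1}x\bigr)$ — the $o(\cdot)$ pieces in~\eqref{ass:lamperti-transience} were tailored exactly so that $\mu_1(x)g'(x)$ and $\mu_2(x)g''(x)$ have errors of order $o(x^{-\gcr-2}\log^{\nu-1}x)$, and the Taylor remainder is smaller still — there exists $x_0 \ge \re + B$ beyond which the sign of $\Exp[f_{\gcr,\nu}(X_{n+1}) - f_{\gcr,\nu}(X_n) \mid X_n = x]$ is that of $-\nu$, as required. The main obstacle, such as it is, is pure bookkeeping: one must verify that every error term generated — from the $o(\cdot)$ terms in~\eqref{ass:lamperti-transience}, from the higher-order Taylor terms, and from the $O(\log^{-2}x)$ corrections in the derivative expansions of $g$ — is genuinely $o\bigl(x^{-\gcr-2}\log^{\nu-1}x\bigr)$, so that the $\log^{\nu-1}$ term really does dominate; this is where the precise shape $o\bigl(\tfrac{1}{x\log x}\bigr)$ and $o\bigl(\tfrac{1}{\log x}\bigr)$ in~\eqref{ass:lamperti-transience} is used in an essential way.
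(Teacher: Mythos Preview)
Your proof is correct and follows essentially the same approach as the paper: a second-order Taylor expansion of $f_{\gcr,\nu}$, cancellation of the leading $x^{-\gcr-2}\log^\nu x$ term by the choice $\gamma=\gcr$, and identification of the surviving $x^{-\gcr-2}\log^{\nu-1}x$ coefficient as $-\nu\gcr s^2/2$. The only difference is cosmetic: the paper outsources the derivative computation to Lemma~3.4.1 of~\cite{mpw}, whereas you compute $g'$, $g''$ and the error bookkeeping explicitly.
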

 \begin{proof}
 Lemma~3.4.1 of~\cite{mpw} shows that (noting the change in sign of $\gamma$ there), for $\gamma \in \R$,
 \begin{align*}
     \Exp \bigl[ f_{\gamma,\nu} (X_{n+1} ) -  f_{\gamma,\nu} (X_{n} ) \bigmid X_n = x \bigr] & =
     - \frac{\gamma}{2x^2} \left( 2 x \mu_1 (x) - (\gamma +1 ) \mu_2 (x) \right) f_{\gamma,\nu} (x) \\
& {} \quad {}     + \frac{\nu}{2 x^2 \log x} \left( 2x \mu_1 (x) - (2 \gamma +1 ) \mu_2 (x) \right)  f_{\gamma,\nu} (x)\\
& {} \quad {} 
     + O ( x^{-2} \log^{-2} x ) f_{\gamma,\nu} (x),
 \end{align*}
 as $x \to \infty$, 
 where $\mu_k$ is as defined at~\eqref{eq:mu-k-def}. 
 By assumption~\eqref{ass:lamperti-transience}, it follows that
 \begin{align*}
     \Exp \bigl[ f_{\gamma,\nu} (X_{n+1} ) -  f_{\gamma,\nu} (X_{n} ) \bigmid X_n = x \bigr] & =
     - \frac{\gamma}{2x^2} \left(2 c - (\gamma + 1 ) s^2 \right) f_{\gamma,\nu} (x) \\
& {} \quad {}     + \frac{\nu}{2 x^2 \log x} \left(2 c - (2 \gamma +1 ) s^2 +o(1) \right)  f_{\gamma,\nu} (x) .
 \end{align*}
 Taking $\gamma = \gcr$ as defined at~\eqref{eq:gamma-c},
 and using the fact that $2c - (\gcr +1)s^2 =0$, we get
 \[  \Exp \bigl[ f_{\gcr,\nu} (X_{n+1} ) -  f_{\gcr,\nu} (X_{n} ) \bigmid X_n = x \bigr] 
 = - \frac{\nu}{2 x^2 \log x} \left(\gcr s^2 +o(1) \right)  f_{\gcr,\nu} (x) ,
 \]
 which, since $\gcr s^2 >0$, yields the result.
 \end{proof}
 
 The next result uses optional stopping to give
 rough bounds on $h(x)$ for later use.
 
 \begin{lemma}
 \label{lem:crude-bound}
  Suppose that~\eqref{ass:markov} holds for $S= \ZP$, and that~\eqref{ass:bounded-jumps} and~\eqref{ass:lamperti-transience} hold. Define $h$ by~\eqref{eq:h-def}. 
 Let $\eps >0$. Then, for all $x$ sufficiently large,
\begin{equation}
    \label{eq:h-crude-bound}
x^{-\gcr} \log^{-\eps} x \leq h(x) \leq x^{-\gcr} \log^\eps x .\end{equation}
 \end{lemma}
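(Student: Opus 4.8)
The plan is to use the Lyapunov functions $f_{\gcr,\nu}$ from Lemma~\ref{lem:lyapunov-function} together with an optional-stopping argument applied to hitting times of $0$ and of large levels. Fix $\eps > 0$; we will treat the two inequalities in~\eqref{eq:h-crude-bound} separately, using $\nu = +\eps/2$ (say) for one and $\nu = -\eps/2$ for the other.

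First I would establish the \emph{upper} bound. By Lemma~\ref{lem:lyapunov-function} with $\nu < 0$, there is $x_0$ such that $(f_{\gcr,\nu}(X_{n \wedge T}))_n$ is a submartingale under $\Pr_x$, where $T := \tau_0 \wedge \sigma_N$ is the minimum of the hitting time of $0$ and the hitting time $\sigma_N := \inf\{ n : X_n \geq N\}$ of a high level $N > x > x_0$. Since increments are bounded by $B$ and $f_{\gcr,\nu}$ is (eventually) monotone and regularly varying, $f_{\gcr,\nu}(X_T)$ takes a value close to $f_{\gcr,\nu}(0) = \re^{-\gcr}$ on $\{\tau_0 < \sigma_N\}$ and close to $f_{\gcr,\nu}(N)$ on $\{\sigma_N < \tau_0\}$ (up to bounded-overshoot corrections of order $N^{-\gcr} \log^\nu N$, which are negligible). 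Applying optional stopping and then letting $N \to \infty$ (using transience, so that $T < \infty$ a.s.\ and $X$ escapes on the non-return event, while $f_{\gcr,\nu}(N) \to 0$), one obtains $f_{\gcr,\nu}(x) \leq C\, \re^{-\gcr} h(x)$ for a constant $C$, i.e.\ $h(x) \geq c\, x^{-\gcr} \log^\nu x$ for large $x$. Wait — this gives a lower bound on $h$ from the $\nu<0$ case; so in fact $\nu<0$ yields the \emph{lower} inequality $h(x) \geq c\, x^{-\gcr}\log^{-\eps/2} x \geq x^{-\gcr}\log^{-\eps} x$ for $x$ large, absorbing the constant. Symmetrically, taking $\nu > 0$ so that $f_{\gcr,\nu}(X_{n\wedge T})$ is a supermartingale, optional stopping gives $f_{\gcr,\nu}(x) \geq \Exp_x[f_{\gcr,\nu}(X_T)] \geq c' \re^{-\gcr} h(x)$ (discarding the nonnegative contribution from $\{\sigma_N < \tau_0\}$ and letting $N \to \infty$), whence $h(x) \leq C'\, x^{-\gcr}\log^{\nu} x \leq x^{-\gcr}\log^{\eps} x$ for large $x$.

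The bookkeeping requires a little care: one must check that the submartingale/supermartingale property holds on all of $\{x \geq x_0\}$ while $X$ may dip below $x_0$ before hitting $0$; this is handled by stopping also upon entry to $\{0,1,\dots,\lceil x_0\rceil\}$, on which set $h$ is bounded above and below by positive constants (by irreducibility, or by the explicit form $f_{\gcr,\nu} = \re^{-\gcr}$ there), and these constants get absorbed since we only claim the bound for sufficiently large $x$. One also uses that on the non-return event $\tau_0 = \infty$ we have $X_n \to \infty$ a.s.\ (the transient Lamperti dichotomy recalled in Section~\ref{sec:lamperti}), so $f_{\gcr,\nu}(X_{n\wedge T}) \to 0$ there and dominated/monotone convergence applies when passing $N\to\infty$.

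The main obstacle is the overshoot control at the upper boundary: because jumps can be as large as $B$, the value $X_{\sigma_N}$ lies in $[N, N+B)$, and one needs $\Exp_x[f_{\gcr,\nu}(X_{\sigma_N}) \1{\sigma_N < \tau_0}]$ to vanish as $N \to \infty$. Since $f_{\gcr,\nu}(X_{\sigma_N}) \leq \max_{N \leq y < N+B} f_{\gcr,\nu}(y) = O(N^{-\gcr}\log^{\nu} N) \to 0$ deterministically, this term is in fact trivially negligible, so the ``obstacle'' is mild — the genuinely delicate point is simply ensuring the stopping set is chosen so that Lemma~\ref{lem:lyapunov-function} applies throughout, which the stop-on-entry-to-$[0,x_0]$ device resolves. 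No sharper input than Lemma~\ref{lem:lyapunov-function} is needed; the finer ratio asymptotic~\eqref{eq:h-ratio} is \emph{not} used here (indeed Lemma~\ref{lem:crude-bound} is an ingredient towards it).
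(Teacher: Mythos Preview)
Your proposal is correct and follows essentially the same approach as the paper: apply optional stopping to $f_{\gcr,\nu}(X_{n\wedge \eta \wedge \sigma_r})$, where $\eta$ is the entry time to $[0,x_0]$ and $\sigma_r$ the exit time above level~$r$, using the supermartingale case $\nu>0$ for the upper bound on $h$ and the submartingale case $\nu<0$ for the lower bound, then letting $r\to\infty$ and absorbing constants via irreducibility and the choice $|\nu|<\eps$. The paper's write-up is slightly cleaner in that it works with~$\eta$ from the outset rather than first introducing $\tau_0$ and then correcting, and for the lower bound it explicitly separates the step $\Pr_x(\eta<\infty)\geq f(x)$ from the irreducibility step $h(x)\geq p\,\Pr_x(\eta<\infty)$, but the substance is identical.
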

 
 \begin{remark}
 The bounds in Lemma~\ref{lem:crude-bound} are too weak to give a good estimate of the key ratio in Theorem~\ref{thm:h-estimate}; indeed, from Lemma~\ref{lem:crude-bound} one cannot even conclude that
 $\lim_{x \to \infty} h(x+z)/h(x)$ exists. Thus the proof of Theorem~\ref{thm:h-estimate} requires additional ideas beyond standard Lyapunov function estimates.
 \end{remark}
 
 \begin{proof}[Proof of Lemma~\ref{lem:crude-bound}.]
Let $\nu >0$ and write $f := f_{\gcr,\nu}$
as defined at~\eqref{eq:f-def} with $\gamma = \gcr$ given by~\eqref{eq:gamma-c}. Take $x_0$ to be the constant in Lemma~\ref{lem:lyapunov-function}. Set $\eta := \inf \{ n \in \ZP: X_n \leq x_0\}$ and
\begin{equation}
    \label{eq:sigma-def}
    \sigma_r := \inf \{ n \in \ZP: X_n \geq r \}, \text{ for } r \geq 0.
\end{equation}
Since $\limsup_{n \to \infty} X_n = \infty$ and, by~\eqref{ass:bounded-jumps}, $X_n \leq X_0 + Bn$, a.s., we have $\sigma_r < \infty$ for all $r \in \RP$, a.s.,
and $\sigma_r \uparrow \infty$ as $r \to \infty$. 
Take $r \in (x_0, \infty)$. Then Lemma~\ref{lem:lyapunov-function} and the fact that $\sup_{x \geq 0} f(x) < \infty$ shows that $f (X_{n \wedge \eta \wedge \sigma_r} )$, $n \in \ZP$, is a non-negative, bounded supermartingale.
Moreover, 
$\lim_{n \to \infty} f (X_{n \wedge \eta \wedge \sigma_r} ) = f ( X_{\eta \wedge \sigma_r})$, a.s.,
and the optional stopping theorem implies that
\[ f(x) = \Exp_x f(X_0) \geq \Exp_x f ( X_{\eta \wedge \sigma_r} ) \geq \Exp_x \left[ f (X_\eta) \1{ \eta < \sigma_r } \right] ,\]
since $f \geq 0$. On $\{ \eta < \infty\}$, we have $f(X_\eta) \geq \inf_{0 \leq x \leq x_0} f(x) \geq \delta$, 
for some $\delta >0$ depending on $x_0$. Hence
$\Pr_x ( \eta < \sigma_r ) \leq \delta^{-1} f(x)$, for every $x \in \ZP$. 
Noting that $\{ \eta < \infty \} = \cup_{r \in \N} \{ \eta < \sigma_r \}$,
continuity along monotone limits shows that, for any $x \in \ZP$,
\[ h (x) = \Pr_x ( \tau < \infty ) \leq \Pr_x ( \eta < \infty ) = \lim_{r \to \infty} \Pr_x ( \eta < \sigma_r )  \leq \delta^{-1} f(x) .\]
This yields the upper bound in~\eqref{eq:h-crude-bound}.

Similarly, take $\nu < 0$ and let $f := f_{\gcr,\nu}$ once more. Now Lemma~\ref{lem:lyapunov-function} shows that $f (X_{n \wedge \eta \wedge \sigma_r} )$, $n \in \ZP$, is a non-negative, bounded submartingale,
and, by optional stopping,
\[ f(x) = \Exp_x f(X_0) \leq \Exp_x f ( X_{\eta \wedge \sigma_r} ) \leq f(r) + \Exp_x \left[ f (X_\eta) \1{ \eta < \sigma_r } \right] ,\]
using the fact that $f$~is non-increasing, so $f(X_{\sigma_r} ) \leq f(r)$, a.s.,
where $\sigma_r$ is defined at~\eqref{eq:sigma-def}. On $\{ \eta < \infty\}$, we have $f(X_\eta) \leq \sup_{0 \leq x \leq x_0} f(x) \leq 1$, say, and so we get
\begin{equation}
    \label{eq:f-lower}
 \Pr_x (\eta < \infty) = \lim_{r \to \infty} \Pr_x ( \eta < \sigma_r ) \geq \lim_{r \to \infty} ( f(x) - f(r) ) = f(x) ,\end{equation}
since $\lim_{r \to \infty} f(r) = 0$. For $n \in \ZP$, define the $\sigma$-algebras  
$\cF_n := \sigma (X_0, X_1, \ldots, X_n)$, and $\cF_\infty := \sigma ( \cup_{n \in \ZP} \cF_n )$; for a stopping time $\kappa$,
let $\cF_\kappa$ denote the $\sigma$-algebra of all $A \in \cF_\infty$ for which $A \cap \{ \kappa \leq n \} \in \cF_n$ for all $n \in \ZP$.
Then $\Pr_x ( \tau < \infty) \geq \Exp_x [ \Pr ( \tau < \infty \mid \cF_{\eta} ) \1{ \eta < \infty } ]$,
and, by irreducibility, $\Pr ( \tau < \infty \mid \cF_{\eta} ) \geq p$, on $\{ \eta < \infty\}$, for some constant $p >0$ (depending on $x_0$). Hence $\Pr_x ( \tau < \infty) \geq p \Pr_x ( \eta < \infty)$, and then~\eqref{eq:f-lower}
yields the lower bound in~\eqref{eq:h-crude-bound}.
 \end{proof}
 
  For $I \subseteq \ZP$, define the first hitting time of set $I$ by $X$ via
 \begin{equation}
     \label{eq:eta-def}
 \eta_I := \inf \{ n \in \ZP : X_n \in I \} . \end{equation}
By irreducibility, $\Pr_x ( \eta_I < \infty ) > 0$
 for every non-empty $I \subseteq \ZP$ and every $x \in \ZP$.
 
 The next result is a key ingredient in our proof. It shows exponential stability of the entrance distribution to an interval, started from a long way above that interval, conditioned on hitting the interval. The proof uses coupling and relies on the uniform irreducibility assumption~\eqref{ass:irreducibility}.
 
 \begin{proposition}
 \label{prop:coupling}
  Suppose that~\eqref{ass:markov} holds for $S= \ZP$, and that~\eqref{ass:bounded-jumps}, \eqref{ass:irreducibility}, and~\eqref{ass:lamperti-transience} hold. 
 For $a \in \ZP$, let $I_a := [ a, a+B] \cap \Z$, where $B$ is the constant in~\eqref{ass:bounded-jumps}.
For all $a \in \ZP$ and all $u \in I_a$, $\theta_a (u) := \lim_{x \to \infty}  \Pr_x ( X_{\eta_{I_a}} = u \mid \eta_{I_a} < \infty )$ exists. Moreover, there exist constants $C< \infty$ and $b >0$ such that, for all $a \in \ZP$ and all $\ell >0$,
 \[ \sup_{x \geq a + B + \ell} \sup_{u \in I_a} \Bigl| \Pr_x ( X_{\eta_{I_a}} = u \mid \eta_{I_a} < \infty ) - \theta_a (u) \Bigr| \leq C \re^{-b \ell} .\]
 \end{proposition}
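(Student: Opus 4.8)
The plan is to realize the conditional law given $\{\eta_{I_a}<\infty\}$ as a Doob $h$-transform, slice the conditioned trajectory at its successive first passages below an arithmetic progression of levels, observe that the sequence of passage \emph{positions} is a Markov chain on a set of only $B$ states, and show that its one-step kernels contract total variation by a factor bounded away from $1$ uniformly in $a$; iterating over the $\asymp\ell/B$ levels separating the starting point from $I_a$ then produces the geometric rate. Since $C\re^{-b\ell}\geq1$ for $\ell$ below any fixed constant once $C$ is large, we may assume throughout that $\ell$ is large. Put $g_a(y):=\Pr_y(\eta_{I_a}<\infty)$; this is positive, equal to $1$ on $I_a$, and harmonic off $I_a$, so the Doob transform $\hat\Pr_x$ defined as at~\eqref{eq:tp}--\eqref{eq:doob-law} with $g_a$ in place of $h$ and $I_a$ in place of $\{0\}$ satisfies, by the argument of Lemma~\ref{lem:doob-h-transform}, $\hat\Pr_x(\eta_{I_a}<\infty)=1$ and $\hat\Pr_x(X_{\eta_{I_a}}=u)=\Pr_x(X_{\eta_{I_a}}=u\mid\eta_{I_a}<\infty)$ for $x\notin I_a$. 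As jumps are at most $B$, from $\{>a+B\}$ one cannot reach $\{<a\}$ in one step, so $X_{\eta_{I_a}}$ always lies in $L:=\{a+1,\dots,a+B\}$; hence, writing $\nu_x$ for the law of $X_{\eta_{I_a}}$ under $\hat\Pr_x$, it suffices to bound $\sum_u|\nu_{x_1}(u)-\nu_{x_2}(u)|$ uniformly in $a$ and in $x_1,x_2\geq a+B+\ell$, and then to let $x_2\to\infty$, which yields both the existence of $\theta_a$ and the stated estimate.

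Set $\ell_k:=a+B+kB$ and $S_k:=\{\ell_{k-1}+1,\dots,\ell_k\}$, so $S_0=L$ and $|S_k|=B$; let $\kappa_k:=\inf\{n\geq0:X_n\leq\ell_k\}$. A chain started strictly above $\ell_K$ that reaches $I_a$ has $\kappa_K<\kappa_{K-1}<\dots<\kappa_0=\eta_{I_a}$, the inequalities being strict because a single jump cannot cross a whole slab, and $\xi_k:=X_{\kappa_k}\in S_k$. By the strong Markov property, under $\hat\Pr_x$ with $x>\ell_K$ the sequence $(\xi_k)_{k=K,\dots,0}$ is a Markov chain with $\xi_0\sim\nu_x$ and step kernels $R^{(a)}_k(v,w):=\hat\Pr_v(X_{\kappa_{k-1}}=w)$ from $S_k$ to $S_{k-1}$, which do not depend on $x$. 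Taking $K\asymp\ell/B$ and writing $\alpha(\cdot)$ for the Dobrushin coefficient, the standard contraction estimate gives $\sum_u|\nu_{x_1}(u)-\nu_{x_2}(u)|\leq 2\prod_{k=1}^{K}\bigl(1-\alpha(R^{(a)}_k)\bigr)$, so the whole problem reduces to a uniform lower bound $\alpha(R^{(a)}_k)\geq\alpha_0>0$, with $\alpha_0$ independent of $a$ and $k$.

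For this, let $R^{(0)}_k$ denote the corresponding (sub-stochastic) first-passage kernel of the \emph{unconditioned} chain; since every intermediate state of a passage path lies above $I_a$, the change-of-measure identity for the transform (cf.~\eqref{eq:doob-law2}) gives $R^{(a)}_k(v,w)=\frac{g_a(w)}{g_a(v)}R^{(0)}_k(v,w)$. I would first prove a uniform Harnack inequality for $\{g_a\}$ at any bounded scale: for each $d$ there is $c_H(d)>0$ independent of $a$ with $g_a(y)/g_a(y')\geq c_H(d)$ whenever $y,y'$ lie above $I_a$ with $|y-y'|\leq d$; this follows by chaining the minorization in~\eqref{ass:irreducibility} (when $y'$ is far from $I_a$ one reaches $y$ in at most $O(1)$ blocks of $m$ steps, each with probability $\geq\eps$, along paths that by~\eqref{ass:bounded-jumps} cannot meet $I_a$; within the bounded layer near $I_a$ one has $g_a\geq\eps^{O(1)}$ uniformly). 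Then $R^{(a)}_k(v,\cdot)\geq c_H R^{(0)}_k(v,\cdot)$, and it remains to minorize $R^{(0)}_k$. Fix $w_\star:=\ell_k+mB$, a bounded distance above $S_k$. Optional stopping applied to $X_n$ itself on the bounded region between levels $\ell_{k-1}$ and $\ell_k+mB$, using $\mu_1(x)\geq0$ there (which holds by~\eqref{ass:lamperti-transience} for all but finitely many $(a,k)$, for which $\alpha(R^{(a)}_k)>0$ follows from a cruder variant of the same estimate together with $\mu_2>0$), shows that from any $v\in S_k$ the chain reaches level $\ell_k+mB$ before $\kappa_{k-1}$ with probability $\geq\frac1{(m+2)B}$; once at that level, a further block of at most $m$ steps cannot descend to $\ell_{k-1}$, and by~\eqref{ass:irreducibility} it then hits the exact state $w_\star$ with probability $\geq\eps$. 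Hence, for every $v\in S_k$, the unconditioned chain visits $w_\star$ before $\kappa_{k-1}$ with probability $\geq\rho_0:=\frac{\eps}{(m+2)B}$, so by the strong Markov property $R^{(0)}_k(v,\cdot)\geq\rho_0\,\Pr_{w_\star}(X_{\kappa_{k-1}}=\cdot)$, a $v$-independent sub-probability on $S_{k-1}$ of total mass $\geq q_0>0$ (uniformly, again by a bounded-scale Harnack comparison with $\Pr_{\ell_{k-1}+1}(\kappa_{k-1}<\infty)\geq\eps$). This Doeblin minorization $R^{(a)}_k(v,\cdot)\geq c_H\rho_0\,\Pr_{w_\star}(X_{\kappa_{k-1}}=\cdot)$ gives $\alpha(R^{(a)}_k)\geq c_H\rho_0 q_0=:\alpha_0>0$ uniformly, and hence $\sum_u|\nu_{x_1}(u)-\nu_{x_2}(u)|\leq 2(1-\alpha_0)^K\leq C\re^{-b\ell}$.

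The main obstacle is this uniform minorization $\alpha(R^{(a)}_k)\geq\alpha_0$: it requires both the uniform Harnack inequality for the family $\{g_a\}$, which lets one pass between the conditioned and unconditioned first-passage kernels at bounded cost, and the gambler's-ruin plus uniform-irreducibility argument that steers the chain to a fixed reference state before it descends below the next level. Lower boundedness of jumps enters essentially here: it keeps every landing slab $S_{k-1}$ bounded, so a single reference state suffices to minorize. Two minor points deserve care but are routine: the finitely many near-boundary levels where~\eqref{ass:lamperti-transience} is not yet in force (handled by the plain irreducibility and non-degeneracy of $X$), and possible periodicity of $X$, which is harmless because the $\xi_k$ record positions only — not position--time pairs — and the first-passage time below a level ranges over all residue classes.
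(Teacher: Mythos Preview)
Your argument is correct and reaches the same exponential bound, but by a different mechanism than the paper. The paper builds an explicit coupling of two copies $Y,Y'$ of the $g_a$-transformed chain in which only the currently larger coordinate moves (and they move together once they coincide); the key technical input is that~\eqref{ass:irreducibility} transfers verbatim to the conditioned transition kernel (this is Lemma~\ref{lem:doob-g-transform}), so that whenever $|Y_n-Y'_n|\leq B$ there is a uniformly positive probability of coalescence within the next~$m$ coupled steps, and bounded jumps guarantee $\asymp\ell/(mB)$ such attempts before either coordinate reaches~$I_a$. You instead record the positions at successive first passages below a ladder of levels spaced~$B$ apart and contract total variation via a Doeblin minorization of each one-step passage kernel, the minorization coming from a Harnack inequality for~$g_a$ combined with a gambler's-ruin estimate that funnels the unconditioned chain through a common reference state~$w_\star$. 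Your Harnack inequality is essentially the content of Lemma~\ref{lem:doob-g-transform}, so the two proofs share that ingredient; what differs is that the paper works directly with a probabilistic coupling in the conditioned measure, while you reduce to standard Dobrushin machinery on a finite state space. One small comparative remark: the paper's coupling uses only~\eqref{ass:bounded-jumps} and~\eqref{ass:irreducibility} here and never invokes~\eqref{ass:lamperti-transience}, whereas your submartingale step appeals to $\mu_1\geq0$ from~\eqref{ass:lamperti-transience}; this is harmless (as you note, the boundedly many low levels where it fails are absorbed into~$C$), but it does make the paper's proof of this particular proposition formally independent of the drift hypothesis.
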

 
 We will apply Proposition~\ref{prop:coupling} in the following form.
 For $y \in \RP$ let $\lfloor y \rfloor$ denote the largest integer no greater than~$y$.
 
 \begin{corollary}
 \label{cor:coupling}
   Suppose that~\eqref{ass:markov} holds for $S= \ZP$, and that~\eqref{ass:bounded-jumps}, \eqref{ass:irreducibility}, and~\eqref{ass:lamperti-transience} hold. 
   For $A \in \RP$ define $a(x) := x - \lfloor A \log x \rfloor$. Let $\delta >0$. 
   Then there exist $A_\delta, x_B \in \RP$ such that, for all $A \geq A_\delta$, all $x \geq x_B$, and all $|z| \leq B$,
   \begin{equation}
       \label{eq:local-prob}
 \sup_{u \in I_{a(x)}} \Bigl| \Pr_{x+z} ( X_{\eta_{I_{a(x)}}} = u \mid \eta_{I_{a(x)}} < \infty ) - \theta_{a(x)} (u) \Bigr| \leq x^{-\delta}  .   \end{equation} 
  \end{corollary}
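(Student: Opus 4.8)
The plan is to derive Corollary~\ref{cor:coupling} directly from Proposition~\ref{prop:coupling}, feeding in the specific choice $a = a(x) = x - \lfloor A \log x \rfloor$ and the perturbed starting point $x + z$ with $|z| \le B$. The key observation is that with this choice the gap between the starting point $x + z$ and the top of the interval $I_{a(x)}$ is $\ell = (x + z) - (a(x) + B) = \lfloor A \log x \rfloor + z - B$, which for all $x$ large and $|z| \le B$ is bounded below by, say, $\tfrac12 A \log x$. So I would first fix $\delta > 0$, and write out this elementary bound on $\ell$: there is $x_B = x_B(A)$ (which, after increasing $A_\delta$, can be taken independent of $A \ge A_\delta$) such that for $x \ge x_B$ and $|z| \le B$ we have $\ell \ge \tfrac12 A \log x$, hence $x + z \ge a(x) + B + \ell$ with $\ell \ge \tfrac12 A \log x$.

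Next I would simply invoke Proposition~\ref{prop:coupling} with this value of $a = a(x)$ (a legitimate choice since the proposition holds for \emph{all} $a \in \ZP$, and $a(x) \in \ZP$ for $x$ large) and with the starting state $x + z$, which satisfies $x + z \ge a(x) + B + \ell$. The proposition yields
\[
\sup_{u \in I_{a(x)}} \Bigl| \Pr_{x+z} ( X_{\eta_{I_{a(x)}}} = u \mid \eta_{I_{a(x)}} < \infty ) - \theta_{a(x)} (u) \Bigr| \le C \re^{-b \ell} \le C \re^{-\frac{b}{2} A \log x} = C x^{-bA/2},
\]
using $\ell \ge \tfrac12 A \log x$ and monotonicity of $t \mapsto \re^{-bt}$. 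It then remains only to choose $A_\delta$ large enough that $b A_\delta / 2 \ge \delta + 1$ (say), and then increase $x_B$ if necessary so that $C x^{-1} \le 1$ for $x \ge x_B$; for such $A \ge A_\delta$ and $x \ge x_B$ the right-hand side is at most $C x^{-bA/2} \le C x^{-\delta - 1} \le x^{-\delta}$, which is~\eqref{eq:local-prob}.

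There is essentially no serious obstacle here: the corollary is a direct specialization of Proposition~\ref{prop:coupling}, and the only thing to be careful about is bookkeeping — ensuring that the threshold $x_B$ can be taken uniformly over $A \ge A_\delta$ (which follows because the constants $C, b$ in Proposition~\ref{prop:coupling} are universal, and the condition $x \ge x_B$ is only needed to guarantee $a(x) \ge 0$, to guarantee $\ell \ge \tfrac12 A \log x$, and to absorb the constant $C$), and that $a(x)$ is a non-negative integer in the allowed range. If any mild complication arises it is in confirming the elementary inequality $\lfloor A \log x \rfloor + z - B \ge \tfrac12 A \log x$ uniformly for $|z| \le B$ and $A \ge A_\delta$; this holds as soon as $\tfrac12 A \log x \ge 2B + 1$, i.e.\ for $x$ exceeding a bound depending only on $A_\delta$ and $B$, so it poses no difficulty. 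The whole argument is a couple of lines of estimation once the correspondence $\ell \leftrightarrow \tfrac12 A \log x$ is made explicit.
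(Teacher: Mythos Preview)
Your proposal is correct and follows essentially the same approach as the paper: both arguments specialize Proposition~\ref{prop:coupling} to $a = a(x)$, compute the gap $\ell$ between $x+z$ and $a(x)+B$, and choose $A$ large so that $C\re^{-b\ell} \le x^{-\delta}$. The only cosmetic difference is that the paper keeps $\ell = \lfloor A\log x\rfloor - 2B$ exactly (obtaining the bound $C\re^{2Bb}\re^{-b\lfloor A\log x\rfloor}$) rather than passing through the intermediate lower bound $\ell \ge \tfrac12 A\log x$, but the bookkeeping and the conclusion are the same.
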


 Before proving Proposition~\ref{prop:coupling} and its corollary, 
 we introduce another conditional Markov chain.
For $a, i \in \ZP$,
define
\begin{equation}
    \label{eq:g-def}
    g_a (i) := \Pr_i ( \eta_{I_a} < \infty ) ,
\end{equation}
where $\eta_{I_a}$
is defined at~\eqref{eq:eta-def}. Note that $g_a(i) = 1$ for $i \leq a+B$.
Similarly to $h$ as defined at~\eqref{eq:h-def},
$g_a : \ZP \to (0,1]$ is harmonic for $P$.
Let $q^a_{i,j}$ denote the one-step transition probabilities for $X$ conditioned on $\eta_a < \infty$, i.e.,
the Doob transform relative to~$g_a$: 
\[ q^a_{i,j} := \frac{g_a(j) }{g_a(i)}p_{i,j} , \text{ for all } i, j \in \ZP, i > a + B,\]
and $q^a_{i,j} := p_{i,j}$ for $i \leq a+B$.
Write $Q^a := (q^a_{i,j})_{i,j \in S}$ for the corresponding transition matrix,
and $\bbQ^a_i$ for the law corresponding to initial state~$i \in \ZP$
and transition matrix $Q^a$: i.e., 
for all $n \in \N$ and all $i, x_1, \ldots, x_n \in \ZP$,
 \begin{align*}
\bbQ^a_i ( X_0 =i, X_1 = x_1, \ldots, X_n = x_n )  = q^a_{i,x_1} q^a_{x_1,x_2} \cdots q^a_{x_{n-1},x_n}. \end{align*}
 The following result gives an analogue of Lemma~\ref{lem:doob-h-transform},
 and shows that the uniform irreducibility hypothesis~\eqref{ass:irreducibility} carries over to $\bbQ^a_i$.

 \begin{lemma}
 \label{lem:doob-g-transform}
  Suppose that~\eqref{ass:markov} holds and that $S=\ZP$. Let $a \in \ZP$. 
  Then $\bbQ^a_i (\eta_{I_a} < \infty) = 1$ for all $i \in \ZP$. Moreover, for every $i \in \ZP$, under $\Pr_i$, the law of $(X_0, X_1, \ldots, X_{\eta_{I_a}})$ given $\eta_{I_a} < \infty$
 is the same as the law of $(X_0, X_1, \ldots, X_{\eta_{I_a}})$ under $\bbQ^a_i$. In addition, if~\eqref{ass:bounded-jumps} and~\eqref{ass:irreducibility} hold,
 then there exists $\eps' > 0$ such that, for every $a, i \in \ZP$,
\begin{equation}
    \label{eq:Q-irreducibility}
\max_{1 \leq n \leq m} \bbQ^a_i ( X_n = j ) \geq \eps', \text{ for all } j \in \ZP \text{ with } | j - i | \leq B ,
\end{equation}
where~$m \in \N$ is as in~\eqref{ass:irreducibility}.
  \end{lemma}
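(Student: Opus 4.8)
The plan is to prove Lemma~\ref{lem:doob-g-transform} in three stages, closely mirroring the structure already established for the $h$-transform in Lemma~\ref{lem:doob-h-transform}. The first stage is to verify that $g_a$ is harmonic for $P$ away from $I_a$, hence that the $q^a_{i,j}$ really do form a transition law: for $i > a+B$ the Markov property gives $g_a(i) = \sum_{j} p_{i,j} g_a(j)$ (the event $\{\eta_{I_a}<\infty\}$ decomposed on the first step, noting $\{\eta_{I_a}<\infty\}$ is unchanged under time-shift when $X_0 \notin I_a$), and since $g_a \equiv 1$ on $\{i \le a+B\}$, one checks $\sum_j q^a_{i,j} = 1$ in both cases $i > a+B$ and $i \le a+B$. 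The second stage is the conditioning identity: one shows $\bbQ^a_i(\eta_{I_a} < \infty) = 1$ and the law-equality of the stopped path $(X_0,\ldots,X_{\eta_{I_a}})$ under $\bbQ^a_i$ and under $\Pr_i(\,\cdot \mid \eta_{I_a}<\infty)$. This is the verbatim analogue of the displayed computation in the proof of Lemma~\ref{lem:doob-h-transform}: for any finite path $i = x_0, x_1, \ldots, x_n$ with $x_0,\ldots,x_{n-1} \notin I_a$, one has
\[
\bbQ^a_i(X_1 = x_1, \ldots, X_n = x_n) = \Pr_i(X_1 = x_1, \ldots, X_n = x_n) \frac{g_a(x_n)}{g_a(i)},
\]
by telescoping the product of the $q^a$'s, and then summing over paths that end at some $u \in I_a$ at time $n$ (using $g_a(u) = 1$) recovers $\Pr_i(\eta_{I_a} = n)/g_a(i)$; summing over $n$ gives total mass $1$, which is the first assertion, and the per-path identity is exactly the second.

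The third stage, establishing~\eqref{eq:Q-irreducibility}, is where the genuinely new work lies, and I expect this to be the main obstacle. The subtlety is that the Doob-transformed transition probabilities $q^a_{i,j} = \frac{g_a(j)}{g_a(i)} p_{i,j}$ are modulated by the ratio $g_a(j)/g_a(i)$, which a priori could be very small, destroying the uniform lower bound from~\eqref{ass:irreducibility}. The key observation to rescue this is that for $|j-i| \le B$ the ratio $g_a(j)/g_a(i)$ is bounded below by a constant depending only on $B$ and the constants in~\eqref{ass:bounded-jumps}, \eqref{ass:irreducibility}, \emph{uniformly in $a$ and $i$}. Indeed, $g_a$ is itself a hitting probability, and by a one-step (or $m$-step) argument using~\eqref{ass:irreducibility}: from $i$, within $m$ steps the chain reaches $j$ with probability at least $\eps$, so $g_a(i) \ge \Pr_i(X_{n_0} = j) g_a(j) \ge \eps\, g_a(j)$ for the appropriate $n_0 \le m$ attaining the max in~\eqref{ass:irreducibility} (using the Markov property and $\{\eta_{I_a}<\infty\}$ being reachable from $j$); hence $g_a(j)/g_a(i) \ge \eps$ for all $i,j$ with $|i-j|\le B$, uniformly in $a$.

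Given this uniform ratio bound, the proof of~\eqref{eq:Q-irreducibility} concludes by a chaining/telescoping argument: for any $i$ and any $j$ with $|j-i|\le B$, fix an $m$-step path $i = y_0, y_1, \ldots, y_{n} = j$ (with $n \le m$) realizing $\Pr_i(X_n = j) \ge \eps$ via~\eqref{ass:irreducibility} — one can take such a path through states differing by at most $B$ at each step since increments are bounded by $B$ — and then
\[
\bbQ^a_i(X_n = j) \ge \prod_{k=0}^{n-1} q^a_{y_k, y_{k+1}} = \frac{g_a(j)}{g_a(i)} \prod_{k=0}^{n-1} p_{y_k, y_{k+1}} \ge \eps^{m+1},
\]
where the ratio telescopes (and we used the crude bound $g_a(j)/g_a(i) \ge \eps$ for consecutive states; more carefully, $\frac{g_a(j)}{g_a(i)} = \prod \frac{g_a(y_{k+1})}{g_a(y_k)} \ge \eps^n$, but here the product of $p$'s is along a single path and the ratio is the endpoint ratio, so one bound $\ge \eps^{n}$ suffices), and $\prod p_{y_k,y_{k+1}} \ge \eps$ is not quite right — instead one notes $\Pr_i(X_1 = y_1, \ldots, X_n = y_n)$ need not individually equal $\eps$, so the cleanest route is to bound $\bbQ^a_i(X_n = j) = \sum_{\text{paths } i \to j} \frac{g_a(j)}{g_a(i)} \Pr_i(\text{path}) \ge \frac{g_a(j)}{g_a(i)} \Pr_i(X_n = j) \ge \eps \cdot \eps = \eps^2$, valid because on every such path the endpoint ratio is the same fixed number $g_a(j)/g_a(i) \ge \eps$. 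Setting $\eps' := \eps^2$ (or relabelling to absorb the possibility that the maximizing $n$ varies) then gives~\eqref{eq:Q-irreducibility} with $m$ unchanged. One must be mildly careful that the maximizing time index $n \le m$ may differ from $j$ to $j$, but since the claim only requires the max over $1 \le n \le m$, this causes no difficulty.
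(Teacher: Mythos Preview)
Your approach matches the paper's: the first two claims are deferred to the argument of Lemma~\ref{lem:doob-h-transform}, and for~\eqref{eq:Q-irreducibility} both you and the paper derive the key ratio bound $g_a(j)/g_a(i)\geq\eps$ for $|i-j|\leq B$ via the Markov property and~\eqref{ass:irreducibility}, then combine it with the $\Pr_i(X_n=j)\geq\eps$ lower bound. Two minor write-up slips to fix: (i) from ``$g_a(i)\geq\eps\,g_a(j)$'' you cannot conclude $g_a(j)/g_a(i)\geq\eps$; you must swap the roles of $i$ and $j$ (legitimate since $|i-j|\leq B$ is symmetric) to get $g_a(j)\geq\eps\,g_a(i)$ first; and (ii) your displayed identity $\bbQ^a_i(X_n=j)=\tfrac{g_a(j)}{g_a(i)}\Pr_i(X_n=j)$ is in general only the inequality $\geq$ (paths may visit $\{0,\ldots,a+B\}$, where $q^a=p\geq \tfrac{g_a(\cdot)}{1}\,p$), but that is precisely the direction you need, so your conclusion $\eps'=\eps^2$ stands --- in fact sharper than the paper's $\eps'=\eps^{m+1}$, which comes from the stepwise bound $q^a_{i,j}\geq\eps\,p_{i,j}$ iterated $n\leq m$ times.
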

 \begin{proof}
 We establish the uniform irreducibility statement for $\bbQ^a$; the other
 statements follow in the same way as the corresponding statements in Lemma~\ref{lem:doob-h-transform}. Let $m \in \N$ and $\eps>0$ be the constants from~\eqref{ass:irreducibility}. Then a consequence of~\eqref{ass:irreducibility} is that, for all $i, j \in \ZP$ with $|j-i| \leq B$,
 there exists $n_{i,j}$ with $n_{i,j} \leq m$ such that, by the Markov property, 
 \[ g_a (i) = \Pr_i ( \eta_{I_a} < \infty ) 
 \geq \Pr_i ( X_{n_{i,j}} = j ) \Pr_j ( \eta_{I_a} < \infty )
 \geq \eps g_a (j) .\]
 Hence $q^a_{i,j} \geq \eps p_{i,j}$ for all $i,j \in\ZP$ with $|i-j| \leq B$.
 It follows that $\bbQ^a_i (X_n = j ) \geq \eps^n \Pr_i (X_n = j)$ for all $i, j, n \in \ZP$, and all $a$. Then~\eqref{eq:Q-irreducibility}
 follows from~\eqref{ass:irreducibility}, with $\eps' = \eps^{m+1}$, with $m \in \N$ as in~\eqref{ass:irreducibility}.
 \end{proof}
 
  \begin{proof}[Proof of Proposition~\ref{prop:coupling}.]
 Fix $a \in \ZP$.
We construct on a single probability space (an embedding of) two coupled copies of $X$ conditioned to reach interval $I_a$, and then stopped, 
one with law $\bbQ_i^a$ and one with law $\bbQ^a_j$.

Set $x_1 := a+B$, and let $\eta := \inf \{ n \in \ZP : X_n \leq x_1 \}$;
by~\eqref{ass:bounded-jumps}, for all $i \in \ZP$ with $i \geq x_1$,
it is the case that $\Pr_i ( \eta = \eta_{I_a} ) = \bbQ^a_i ( \eta = \eta_{I_a} ) = 1$, with $\eta_{I_a}$ as defined at~\eqref{eq:eta-def}.
Moreover, Lemma~\ref{lem:doob-g-transform} says that
$\bbQ^a_i (\eta_{I_a} < \infty) = 1$. Hence $\bbQ^a_i ( \eta < \infty )$ for all $i \in \ZP$.

For initial states $i,j \in \ZP$ with $i, j \geq x_1$, define a Markov chain $(Y,Y')$ on $\ZP^2$ with 
law $\bbQ^a_{i,j}$ such that
$\bbQ^a_{i,j} ( Y_0 = i, \, Y'_0 = j) = 1$, and 
one-step
transition probabilities given by
\begin{itemize}
    \item if $x \vee y \leq x_1$, then $\bbQ^a_{i,j} (Y_{n+1} = x, \, Y'_{n+1} = y \mid Y_n = x, \, Y'_n = y ) =1$; 
    \item if $x > x_1$, then $\bbQ^a_{i,j} ( Y_{n+1} = Y'_{n+1} = z \mid Y_n = Y'_n = x) = q^a_{x,z}$ for all $z$;
    \item if $x > y \vee x_1$, then $\bbQ^a_{i,j} ( Y_{n+1} = z,\, Y'_{n+1} = y \mid Y_n = x,\, Y'_n = y) = q^a_{x,z}$ for all $z$;
    \item if $y > x \vee x_1$, then $\bbQ^a_{i,j} ( Y_{n+1} = x,\, Y'_{n+1} = z \mid Y_n = x,\, Y'_n = y) = q^a_{y,z}$ for all $z$;
\end{itemize}
all other probabilities being zero. 
In words, each coordinate of the process stops as soon as it enters $[0,x_1]$. Otherwise, if $Y$ and $Y'$ coincide, then they jump together according to the transition matrix $Q^a$,
while if they do not coincide, then whichever of $Y, Y'$ is bigger jumps according to $Q^a$, with the other
coordinate remaining fixed.

Define $K := \{ n \in \ZP : Y_n \geq Y'_n \}$ and $K' := \{ n \in \ZP : Y'_n \geq Y_n\}$.
Process $Y$ can jump only at times $K$, process $Y'$ can jump only at times $K'$,
and both processes can jump (together) only at times $K \cap K'$. List the elements of $K$   as a (possibly terminating) sequence as $\nu_0 < \nu_1 < \cdots$ and the elements of $K'$ as $\nu'_0 < \nu'_1 < \cdots$.
Let
 $\rho := \inf \{ n \in \ZP : Y_{\nu_n} \leq x_1\}$,
 and $\rho' := \inf \{ n \in \ZP : Y'_{\nu'_{n}} \leq x_1\}$.

By construction, 
$(Y_{\nu_n}, 0 \leq n \leq \rho)$ under $\bbQ^a_{i,j}$ has the same law as $(X_n, 0 \leq n \leq \eta)$ under $\bbQ^a_i$,  
and 
$(Y'_{\nu'_n}, 0 \leq n \leq \rho')$ under $\bbQ^a_{i,j}$ has the same law as $(X_n, 0 \leq n \leq \eta)$ under $\bbQ^a_j$.
In particular, since $\bbQ^a_i ( \eta < \infty ) = 1$, we have that
$\bbQ^a_{i,j} ( \nu_\rho < \infty ) = \bbQ^a_{i,j} ( \nu'_{\rho'} < \infty ) = 1$ for all $i, j \geq x_1$.
Define $\kappa:= \inf \{ n \in \ZP : Y_n = Y'_n \}$. On the event $\{ \kappa \leq \nu_\rho \vee \nu'_{\rho'} < \infty \}$, we have that $Y_{\nu_\rho} = Y'_{\nu'_{\rho'}}$, and hence
\[ \sup_{u \in I_a} \left| \bbQ^a_{i,j} ( Y_{\nu_\rho} = u ) - \bbQ^a_{i,j} ( Y'_{\nu'_{\rho'}} = u) \right|
\leq \bbQ^a_{i,j} ( Y_{\nu_\rho} \neq Y'_{\nu'_{\rho'}} ) \leq 
1 - \bbQ^a_{i,j} ( \kappa \leq \nu_\rho \vee \nu'_{\rho'} < \infty ) .\]
It follows that 
\begin{align}
    \label{eq:coupling-bound}
& \sup_{u \in I_a} \bigl| \Pr_i ( X_\eta = u \mid \eta < \infty ) - \Pr_j ( X_\eta = u \mid \eta < \infty ) \bigr| \nonumber\\
& {} \qquad\qquad\qquad {} \leq \bbQ^a_{i,j} ( \kappa > \nu_\rho \vee \nu'_{\rho'} )
+  \bbQ^a_{i,j} (  \nu_\rho \vee \nu'_{\rho'} = \infty ) \nonumber\\
& {} \qquad\qquad\qquad {} = \bbQ^a_{i,j} ( \kappa > \nu_\rho \vee \nu'_{\rho'} ),
\end{align}
since $\bbQ^a_{i,j} ( \nu_\rho < \infty ) = \bbQ^a_{i,j} ( \nu'_{\rho'} < \infty ) = 1$. 
 It remains to bound the right-hand side of~\eqref{eq:coupling-bound}. The idea is that at each time~$n$ for which $Y_n', Y_n$
 are such that $|Y_n' - Y_n | \leq B$, the uniform irreducibility bound~\eqref{eq:Q-irreducibility} from Lemma~\ref{lem:doob-g-transform} shows that there is uniformly positive probability of coupling within $m$ steps; if not, one can try again later, and the total number of coupling attempts before reaching $I_a$ will grow linearly in $i \wedge j$, by~\eqref{ass:bounded-jumps}. We give the details.
 
 Take $i,j > x_1 + (k+1) m B$ for some $k \in \N$. If $|i-j | >B$, then the
 maximal of the two processes will move, and, since $\bbQ^a_i ( \eta < \infty) = \bbQ^a_j (\eta < \infty) =1$ (by Lemma~\ref{lem:doob-g-transform}), we will eventually have $| Y_n - Y'_n | < B$ while $Y_n, Y_n' \geq x_1 + k m B$. Hence we may, without loss of generality, suppose that $| i - j | \leq B$. It then follows from~\eqref{eq:Q-irreducibility} that for some $n \leq m$ and some $\eps''>0$, $\bbQ^a_{i,j} ( Y_n = Y'_n ) \geq \eps''$, in which case $\kappa < \nu_\rho \vee \nu'_{\rho'}$. Otherwise, we can apply the same argument at time $m$, by which time $Y_n, Y_n' \geq x_1 + (k-1) m B$. 
 Given $i, j > x_1$, choose the constant~$k \in \ZP$ so that $i \wedge j \geq x_1 + k m B$, i.e., $k = \lfloor \frac{(i \wedge j) - x_1}{mB} \rfloor$. Then iterating the above coupling argument~$k$ times, we get
 \[ \bbQ^a_{i,j} ( \kappa > \nu_\rho \vee \nu'_{\rho'} ) \leq (1-\eps'')^k \leq C \re^{-b [ (i \wedge j ) - a]} , \text{ for all } i, j \geq a+B, \]
 where the constants $C< \infty$ and $b>0$ depend on $B, m$, and $\eps''$, but do not depend on $a$, $i$, or $j$. Then~\eqref{eq:coupling-bound} yields,
 for all  $i, j \geq a+B$,
 \begin{equation}
    \label{eq:exponential-bound-cauchy}
\sup_{u \in I_a} \bigl| \Pr_i ( X_\eta = u \mid \eta < \infty ) - \Pr_j ( X_\eta = u \mid \eta < \infty ) \bigr| \leq C \re^{-b [ (i \wedge j ) - a]} .
\end{equation} 
 In particular, the bound~\eqref{eq:exponential-bound-cauchy} shows that for each $u \in I_a$,
 $\Pr_i ( X_\eta = u \mid \eta < \infty )$ is a Cauchy sequence in~$i$, and hence $\theta_a (u) = \lim_{i \to \infty} \Pr_i ( X_\eta = u \mid \eta < \infty )$ exists; the exponential convergence rate also follows from~\eqref{eq:exponential-bound-cauchy}.
  \end{proof}
  
  \begin{proof}[Proof of Corollary~\ref{cor:coupling}.]
  Take $a(x) := x - \lfloor A \log x \rfloor$,
  with $A \geq 1$ and $x \geq \re^{1+2B}$, so that $\lfloor A \log x\rfloor - 2B > 0$.
    Then for $|z| \leq B$,
    $x +z \geq  a(x) + B + \lfloor A \log x\rfloor - 2B$. 
  We can then apply Proposition~\ref{prop:coupling} with $a = a(x) \in \ZP$
  and $\ell = \lfloor A \log x\rfloor - 2B >0 $ to obtain
  \[  \sup_{u \in I_{a(x)}} \Bigl| \Pr_{x+z} ( X_{\eta_{I_{a(x)}}} = u \mid \eta_{I_a} < \infty ) - \theta_{a(x)} (u) \Bigr| \leq C \re^{2B b} \re^{-b \lfloor A \log x\rfloor} ,\]
  for all $|z| \leq B$ and all $x \geq \re^{1+2B} =: x_B$. 
Hence we may choose $A \geq A_\delta$ large enough so that~\eqref{eq:local-prob} holds, as claimed.
  \end{proof}
 
 The next result enables us to express the ratio of return probabilities to hitting probabilities of a relatively nearby interval that is nevertheless far from the origin, and hence more amenable to estimation by Lyapunov functions based on our asymptotic assumptions.
 
 \begin{lemma}
 \label{lem:h-ratio-bound}
  Suppose that~\eqref{ass:markov} holds for $S= \ZP$, and that~\eqref{ass:bounded-jumps}, \eqref{ass:irreducibility}, and~\eqref{ass:lamperti-transience} hold. 
  Let  $a(x) = x - \lfloor A \log x \rfloor$,
 for $A \geq A_{\gcr+3}$ the constant in Corollary~\ref{cor:coupling}.
Then
 \[ \sup_{|z| \leq B} \left| \frac{h(x+z)}{h(x)} - \frac{\Pr_{x+z} ( \eta_{I_{a(x)}} < \infty ) }{\Pr_{x} ( \eta_{I_{a(x)}} < \infty )} \right| = O ( x^{-2} ), \text{ as } x \to \infty .\]
  \end{lemma}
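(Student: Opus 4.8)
\textbf{Proof proposal for Lemma~\ref{lem:h-ratio-bound}.}
The plan is to decompose the event of returning to $0$ according to the interval $I_{a(x)} := [a(x),a(x)+B]\cap\Z$ through which any such trajectory must pass, since this interval sits far from the origin and close to $x$. First I would record that, for all sufficiently large $x$, $a(x) = x - \lfloor A\log x\rfloor$ satisfies $1 \le a(x)$ and $a(x)+B < x-B \le x+z$ for every $|z|\le B$ (because $\lfloor A\log x\rfloor > 2B$ eventually). By the bounded-jumps assumption~\eqref{ass:bounded-jumps}, a trajectory started from $x+z > a(x)+B$ cannot reach $0$ without first entering $I_{a(x)}$, so on $\{\tau<\infty\}$ we have $\eta_{I_{a(x)}} < \tau < \infty$. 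Applying the strong Markov property at $\eta_{I_{a(x)}}$, writing $\pi_i(u) := \Pr_i(X_{\eta_{I_{a(x)}}} = u \mid \eta_{I_{a(x)}}<\infty)$ and recalling $g_a(i) = \Pr_i(\eta_{I_a}<\infty)$ from~\eqref{eq:g-def}, and noting $h(u) = \Pr_u(\tau<\infty)$ is the ordinary hitting probability for $u\in I_{a(x)}$ since then $u\ge 1$, I get
\[ h(x+z) \;=\; \sum_{u\in I_{a(x)}} \Pr_{x+z}\bigl(X_{\eta_{I_{a(x)}}}=u,\,\eta_{I_{a(x)}}<\infty\bigr)\,h(u) \;=\; g_{a(x)}(x+z)\sum_{u\in I_{a(x)}}\pi_{x+z}(u)\,h(u), \]
together with the identity obtained by setting $z=0$. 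Dividing, and using $\Pr_i(\eta_{I_{a(x)}}<\infty) = g_{a(x)}(i)$, I would write
\[ \frac{h(x+z)}{h(x)} - \frac{\Pr_{x+z}(\eta_{I_{a(x)}}<\infty)}{\Pr_x(\eta_{I_{a(x)}}<\infty)} \;=\; \frac{g_{a(x)}(x+z)}{g_{a(x)}(x)}\left( \frac{\sum_u \pi_{x+z}(u)h(u)}{\sum_u \pi_x(u)h(u)} - 1\right), \]
so it remains to show the right-hand side is $O(x^{-2})$, uniformly in $|z|\le B$.

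For the bracketed factor, write its numerator as $\sum_u(\pi_{x+z}(u)-\pi_x(u))h(u)$. Since both $z$ and $0$ lie in $[-B,B]$, Corollary~\ref{cor:coupling} applied with $\delta = \gcr+3$ (whence the hypothesis $A\ge A_{\gcr+3}$) gives $\sup_{u\in I_{a(x)}}|\pi_{x+z}(u)-\pi_x(u)| \le 2x^{-(\gcr+3)}$ for all large $x$; since $I_{a(x)}$ has at most $B+1$ elements and $h(u) = x^{-\gcr+o(1)}$ uniformly for $u\in I_{a(x)}$ by Lemma~\ref{lem:crude-bound} (all such $u$ are $x(1+o(1))$), the numerator is $O(x^{-2\gcr-3+o(1)})$. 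For the denominator, Lemma~\ref{lem:crude-bound} gives $\sum_u\pi_x(u)h(u) \ge \min_{u\in I_{a(x)}} h(u) \ge x^{-\gcr+o(1)}$, so the bracketed factor is $O(x^{-\gcr-3+o(1)})$. For the prefactor $g_{a(x)}(x+z)/g_{a(x)}(x)$, bound the numerator by $1$ and, from the $z=0$ identity above combined with Lemma~\ref{lem:crude-bound}, $g_{a(x)}(x) = h(x)/\sum_u\pi_x(u)h(u) \ge h(x)/\max_{u\in I_{a(x)}}h(u) \ge x^{-o(1)}$; hence the prefactor is $O(x^{o(1)})$.

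Combining, the right-hand side of the displayed difference is $O(x^{-\gcr-3+o(1)}) = o(x^{-3})$, which is certainly $O(x^{-2})$ because $\gcr>0$; uniformity in $|z|\le B$ is inherited from the uniformity in Corollary~\ref{cor:coupling} and the fact that no crude bound above depends on $z$. The only point requiring genuine care is the reduction in the first paragraph: one must check that bounded jumps really forces passage through $I_{a(x)}$ and that the strong Markov decomposition (and its division) is set up correctly. After that, the argument is routine bookkeeping with the crude bounds of Lemma~\ref{lem:crude-bound}, the margin ``$+3$'' in $A_{\gcr+3}$ being exactly what guarantees the error beats $x^{-2}$ no matter how small $\gcr$ happens to be.
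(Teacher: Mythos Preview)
Your argument is correct and follows essentially the same strategy as the paper: the strong Markov decomposition through $I_{a(x)}$, the application of Corollary~\ref{cor:coupling} with $\delta=\gcr+3$, and the crude bounds of Lemma~\ref{lem:crude-bound} on $h(u)$ for $u\in I_{a(x)}$ are exactly the ingredients used there. The one noteworthy difference is in how you handle the prefactor $g_{a(x)}(x+z)/g_{a(x)}(x)$: the paper replaces each $\pi_{x+z}$ by the limiting distribution $\theta_{a(x)}$ to obtain a common factor $\sum_u h(u)\theta_{a(x)}(u)$ that cancels in the ratio, and then invokes Lemma~\ref{lem:eta-R-bound} (via~\eqref{eq:R-limit}) to show $g_{a(x)}(x)\to 1$, giving a bounded prefactor; you instead compare $\pi_{x+z}$ to $\pi_x$ directly and accept the cruder bound $g_{a(x)}(x)\ge x^{-o(1)}$ from Lemma~\ref{lem:crude-bound} alone, relying on the spare $\gcr>0$ in the exponent to absorb it. Your route is therefore marginally more self-contained, at the cost of a slightly looser (but still more than sufficient) final estimate.
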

  
  Before proving Lemma~\ref{lem:h-ratio-bound}, we 
  examine the probabilities $\Pr_{x+z} ( \eta_{I_{a(x)}} < \infty )$. 
    Take $a(x) = x - \lfloor A \log x \rfloor$, as in Corollary~\ref{cor:coupling}.
 For $\nu \in \R$, $x \in \RP$, and $z \in \R$ with $x \geq 1$ and $x+z \geq 0$, define
  \begin{equation}
      \label{eq:R-def}
 R_\nu (x,z) := \frac{f_{\gcr,\nu} (x+z)}{\sum_{u \in I_{a(x)}} \theta_{a(x)} (u) f_{\gcr,\nu} (u)} ,
   \end{equation}
   where $\theta_a(u)$ is as defined in Proposition~\ref{prop:coupling},
   $f$ is as defined at~\eqref{eq:f-def}, and~$\gcr$ is given by~\eqref{eq:gamma-c}. Note that, for every $\gamma >0$, for all $b : \ZP \to \RP$ with $\lim_{x \to \infty} b(x)/x = 0$,
   \begin{equation}
       \label{eq:f-smooth}
       \lim_{x \to \infty} \sup_{|z| \leq b(x)} \left| \frac{f_{\gamma,\nu} (x+z)}{f_{\gamma,\nu} (x)} - 1 \right| = 0 .
   \end{equation}
   Since $\sum_{u \in I_a} \theta_a (u) = 1$, it follows from~\eqref{eq:R-def} and~\eqref{eq:f-smooth} that
   \begin{equation}
       \label{eq:R-limit}
       \lim_{x \to \infty} \sup_{|z| \leq B} \left| R_\nu (x,z) - 1 \right| = 0.
   \end{equation}
   The following lemma combines the Lyapunov function ideas of Lemma~\ref{lem:crude-bound} with the stability of the interval entrance distribution from Corollary~\ref{cor:coupling} to obtain refined hitting probability bounds.
 
 \begin{lemma}
 \label{lem:eta-R-bound}
  Suppose that~\eqref{ass:markov} holds for $S= \ZP$, and that~\eqref{ass:bounded-jumps}, \eqref{ass:irreducibility}, and~\eqref{ass:lamperti-transience} hold. 
 Let $\eps >0$ and  $a(x) = x - \lfloor A \log x \rfloor$,
 for $A \geq A_{\gcr+3}$ the constant in Corollary~\ref{cor:coupling}. Then for all $x \in \N$ and all $|z| \leq B$, as $x \to \infty$,
 with $R$ as defined at~\eqref{eq:R-def}, 
 \[ R_{-\eps} (x,z)  + O ( x^{-2} ) 
 \leq
 \Pr_{x+z}  ( \eta_{I_{a(x)}} < \infty) \leq R_{\eps} (x,z)  + O ( x^{-2} ). \]
 \end{lemma}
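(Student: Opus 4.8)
The plan is to run an optional-stopping argument for the Lyapunov function $f_{\gcr,\nu}$ of Lemma~\ref{lem:lyapunov-function}, but now stopping the process not at a single point but at its first entrance to the interval $I_{a(x)}$, and then to use the stability of the entrance distribution from Corollary~\ref{cor:coupling} to replace the (unknown) actual entrance law by the limiting law $\theta_{a(x)}(\cdot)$ at a cost of $O(x^{-2})$. For the upper bound we take $\nu = \eps > 0$, so that $f_{\gcr,\eps}(X_{n\wedge \eta_{I_{a(x)}} \wedge \sigma_r})$ is a bounded supermartingale once $a(x)$ is large enough that Lemma~\ref{lem:lyapunov-function} applies on $[a(x),\infty)$ (which holds for $x$ large since $a(x)\to\infty$); for the lower bound we take $\nu = -\eps < 0$ and get a bounded submartingale. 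First I would apply optional stopping at $\eta_{I_{a(x)}} \wedge \sigma_r$, starting from $x+z$, and let $r \to \infty$ (using $\sigma_r \uparrow \infty$ a.s.\ and $f$ bounded, together with the fact that on $\{\eta_{I_{a(x)}} = \infty\}$ we have $X_n \to \infty$ so $f(X_{\sigma_r}) \to 0$, exactly as in the proof of Lemma~\ref{lem:crude-bound}); since $\Pr_{x+z}(\eta_{I_{a(x)}} < \infty) = 1$ here — because the walk is non-confined and $I_{a(x)}$ lies below $x+z$ with $X$ having bounded downward jumps — the limit procedure gives
\[
f_{\gcr,\eps}(x+z) \;\geq\; \Exp_{x+z}\bigl[ f_{\gcr,\eps}(X_{\eta_{I_{a(x)}}}) \1{\eta_{I_{a(x)}} < \infty}\bigr]
\]
for the upper-bound case, and the reverse inequality with $\nu = -\eps$ for the lower-bound case. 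Wait — that is not quite the inequality we want; what we actually need is to write $\Pr_{x+z}(\eta_{I_{a(x)}} < \infty)$ on the right, which we do by noting that on $\{\eta_{I_{a(x)}} < \infty\}$ the endpoint $X_{\eta_{I_{a(x)}}}$ lies in the finite set $I_{a(x)}$, so
\[
\Exp_{x+z}\bigl[ f_{\gcr,\nu}(X_{\eta_{I_{a(x)}}}) \1{\eta_{I_{a(x)}} < \infty}\bigr]
= \sum_{u \in I_{a(x)}} f_{\gcr,\nu}(u)\, \Pr_{x+z}\bigl( X_{\eta_{I_{a(x)}}} = u,\ \eta_{I_{a(x)}} < \infty \bigr),
\]
and $\eta_{I_{a(x)}} < \infty$ a.s.\ so the indicator is redundant.

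The second step is to insert Corollary~\ref{cor:coupling}. Writing $p_u := \Pr_{x+z}(X_{\eta_{I_{a(x)}}} = u \mid \eta_{I_{a(x)}} < \infty)$, Corollary~\ref{cor:coupling} (with $\delta = \gcr + 3$, which fixes the choice $A \geq A_{\gcr+3}$) gives $|p_u - \theta_{a(x)}(u)| \leq x^{-(\gcr+3)}$ uniformly in $u \in I_{a(x)}$ and $|z| \leq B$, for all $x \geq x_B$. Since $|I_{a(x)}| = B+1$ and $f_{\gcr,\nu}(u) = O(a(x)^{-\gcr}\log^{\pm\eps} a(x)) = O(x^{-\gcr}\log^{\eps} x)$ uniformly for $u \in I_{a(x)}$ (using $a(x) = x - \lfloor A\log x\rfloor \sim x$), we get
\[
\Bigl| \sum_{u \in I_{a(x)}} f_{\gcr,\nu}(u)\, p_u \;-\; \sum_{u \in I_{a(x)}} f_{\gcr,\nu}(u)\, \theta_{a(x)}(u) \Bigr|
\leq (B+1)\, x^{-(\gcr+3)}\, O\bigl(x^{-\gcr}\log^{\eps} x\bigr) = O(x^{-2}),
\]
once we divide through by the normalisation. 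Dividing the optional-stopping inequality by $\sum_{u\in I_{a(x)}}\theta_{a(x)}(u) f_{\gcr,\nu}(u)$, which is exactly the denominator in the definition~\eqref{eq:R-def} of $R_\nu(x,z)$, and accounting for the replacement error just estimated, we obtain
\[
R_\nu(x,z) = \frac{f_{\gcr,\nu}(x+z)}{\sum_{u}\theta_{a(x)}(u) f_{\gcr,\nu}(u)}
\;\geq\; \Pr_{x+z}(\eta_{I_{a(x)}} < \infty)\,\bigl(1 + O(x^{-(\gcr+3)})\bigr) + \text{(correction)},
\]
and since $\Pr_{x+z}(\eta_{I_{a(x)}}<\infty) \leq 1$ the multiplicative error is itself $O(x^{-2})$, yielding the claimed $R_{\eps}(x,z) + O(x^{-2})$ upper bound and, symmetrically with $\nu = -\eps$, the $R_{-\eps}(x,z) + O(x^{-2})$ lower bound. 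One should double-check that the $O(x^{-2})$ is uniform in $|z|\leq B$: this is fine because $f_{\gcr,\nu}(x+z)/f_{\gcr,\nu}(x) = 1 + o(1)$ uniformly in $|z|\leq B$ by~\eqref{eq:f-smooth}, and the coupling bound in Corollary~\ref{cor:coupling} is already stated uniformly in $|z| \leq B$.

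The main obstacle, I expect, is the bookkeeping needed to show that the error genuinely is $O(x^{-2})$ rather than something larger: the denominator $\sum_u \theta_{a(x)}(u) f_{\gcr,\nu}(u)$ is of order $x^{-\gcr}\log^{\pm\eps}x$, so to get a relative error of $O(x^{-2})$ after dividing we need the \emph{absolute} discrepancy between the true and limiting entrance laws, weighted by $f$, to be $O(x^{-\gcr-2})$ — and that is precisely why one chooses $\delta = \gcr + 3$ in Corollary~\ref{cor:coupling} (with two spare powers of $x$ to absorb the $\log^{\eps}x$ factor and the fixed constant $B+1$). The only other point requiring care is the justification that $\Pr_{x+z}(\eta_{I_{a(x)}} < \infty) = 1$, and that the supermartingale/submartingale convergence at the stopping time goes through: both follow from non-confinement plus bounded jumps exactly as in the proof of Lemma~\ref{lem:crude-bound}, since $I_{a(x)}$ is an interval of width $B$ and downward jumps are at most $B$, so the walk cannot jump over $I_{a(x)}$ without landing in it. Everything else is routine.
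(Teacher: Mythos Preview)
Your approach is essentially the same as the paper's: apply optional stopping to $f_{\gcr,\pm\eps}(X_{n\wedge\eta\wedge\sigma_r})$, send $r\to\infty$, decompose the expectation over the entrance point in $I_{a(x)}$, and invoke Corollary~\ref{cor:coupling} with $\delta=\gcr+3$ to swap the true entrance law for $\theta_{a(x)}(\cdot)$ at cost $O(x^{-2})$ after normalising by the denominator of $R_\nu$. The bookkeeping you sketch is correct, and your explanation of why $\delta=\gcr+3$ is the right choice matches the paper's.

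There is, however, one false assertion you make twice: you claim $\Pr_{x+z}(\eta_{I_{a(x)}}<\infty)=1$, reasoning from non-confinement and bounded downward jumps. This is wrong. The process is \emph{transient}, so $X_n\to\infty$ a.s., and from $x+z$ it may well never descend to $I_{a(x)}$; indeed the whole point of the lemma is to estimate this probability, which is close to but strictly less than~$1$ (cf.~\eqref{eq:x-return}). Non-confinement gives $\limsup X_n=\infty$, not recurrence of lower levels; the bounded-downward-jumps observation only says the walk cannot overshoot $I_{a(x)}$ \emph{if} it goes below, not that it must go below. Fortunately this error is harmless in your write-up: you handle the $\{\eta=\infty\}$ contribution correctly in the $r\to\infty$ limit via $f(X_{\sigma_r})\to 0$, and you then correctly carry the factor $\Pr_{x+z}(\eta<\infty)$ through the rest of the computation, so the final inequalities are right. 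Just delete the two ``$\eta<\infty$ a.s.'' remarks and the argument stands.
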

  \begin{proof}
For $r \in \RP$, define $\sigma_r$ by~\eqref{eq:sigma-def}, 
and, for ease of notation, write $\eta := \eta_{I_{a(x)}}$. 
Fix $\eps >0$.
If follows from Lemma~\ref{lem:lyapunov-function} that, 
for all $x$ sufficiently large, 
$f_{\gcr,\eps} (X_{n \wedge \eta  \wedge \sigma_r} )$, $n \in \ZP$,
is a non-negative, uniformly bounded supermartingale with
$\lim_{n \to \infty} f_{\gcr,\eps} (X_{n \wedge \eta  \wedge \sigma_r} ) = f_{\gcr,\eps} (X_{\eta \wedge \sigma_r})$, a.s. Hence, by optional stopping,
\[ f_{\gcr,\eps} (x+z) \geq \Exp_{x+z} f_{\gcr,\eps} \bigl(X_{\eta \wedge \sigma_r} \bigr)
\geq \Exp_{x+z} \left[ f_{\gcr,\eps} \bigl(X_{\eta} \bigr) \1 { \eta < \sigma_r } \right] .\]
Thus, by monotone convergence,
\[  f_{\gcr,\eps} (x+z) \geq \lim_{r \to \infty} \Exp_{x+z} \left[f_{\gcr,\eps} \bigl(X_{\eta} \bigr) \1 { \eta < \sigma_r } \right]
= \Exp_{x+z} \left[ f_{\gcr,\eps} \bigl(X_{\eta} \bigr) \1 { \eta < \infty } \right] .\]
For $|z| \leq B$ and $x+z \in \ZP$, we get from an application of~\eqref{eq:local-prob} with $\delta = \gcr +3$,
provided $A \geq A_{\gcr+3}$ the constant in Corollary~\ref{cor:coupling},
\begin{align*}
     f_{\gcr,\eps} (x+z) & \geq 
 \Pr_{x+z} \bigl(  \eta < \infty \bigr)  \sum_{u \in I_{a(x)}} f_{\gcr,\eps} (u) \Pr_{x+z} ( X_{\eta} = u \mid \eta < \infty ) \\
& = \Pr_{x+z} \bigl(  \eta < \infty \bigr) \left[ \sum_{u \in I_{a(x)}} f_{\gcr,\eps} (u)
 \theta_{a(x)} (u) + O ( x^{-\gcr-3} ) \right] ,\end{align*}
 using boundedness of $f_{\gcr,\eps}$
 and the fact that $I_{a(x)}$ has $B+1$ elements. With~\eqref{eq:R-limit} and the fact that $x^{-\gcr-3} / f_{\gcr,\eps} (x) = O(x^{-2})$, this gives the upper bound in the lemma. 

On the other hand, Lemma~\ref{lem:lyapunov-function} shows that, for all $x$ sufficiently large, the process 
$f_{\gcr,-\eps} (X_{n \wedge \eta  \wedge \sigma_r} )$, $n \in \ZP$,
is a non-negative, uniformly bounded submartingale with
$\lim_{n \to \infty} f_{\gcr,-\eps} (X_{n \wedge \eta  \wedge \sigma_r} ) = f_{\gcr,-\eps} (X_{\eta \wedge \sigma_r})$, a.s. Hence, by optional stopping,
\begin{align*} f_{\gcr,-\eps} (x+z) & \leq \Exp_{x+z} f_{\gcr,-\eps} (X_{\eta \wedge \sigma_r} )
\\
& \leq \Exp_{x+z}   \left[ f_{\gcr,-\eps} \bigl(X_{\eta} \bigr)  \1 { \eta < \sigma_r } \right] 
+ f_{\gcr,-\eps} (r ) \Pr_{x+z} \bigl( \sigma_r <  \eta \bigr) 
.\end{align*}
Hence, taking $r \to \infty$, since $f_{\gcr,-\eps} (r) \to 0$, we get
\[ f_{\gcr,-\eps} (x+z) \leq \Exp_{x+z} \left[ f_{\gcr,-\eps}  \bigl(X_{\eta} \bigr)  \1 { \eta < \infty } \right] .\]
Another application of~\eqref{eq:local-prob} now yields the lower bound in the lemma.
 \end{proof}
  
  Now we can complete the proof of Lemma~\ref{lem:h-ratio-bound}.
  
  \begin{proof}[Proof of Lemma~\ref{lem:h-ratio-bound}.]
  First observe that, by~\eqref{ass:bounded-jumps}, whenever $x \in \ZP$ has $x > a + B$,
  \begin{align}
  \label{eq:h-strong-markov}
     h(x) = \Pr_x ( \tau < \infty ) & = \Pr_x (\tau < \infty, \, \eta_{I_a} < \infty) \nonumber\\
     & = \Exp_x \left[ \Pr ( \tau < \infty \mid \cF_{\eta_{I_a}} ) \1 { \eta_{I_a} < \infty} \right] \nonumber\\
     & = \Exp_x \left[ h ( X_{\eta_{I_a}} ) \1 { \eta_{I_a} < \infty} \right] ,
  \end{align}
  by~\eqref{eq:h-def} and the strong Markov property applied at time $\eta_{I_a}$. 
   Combining
the $\delta = \gcr +3$ case of~\eqref{eq:local-prob} with~\eqref{eq:h-strong-markov}, applied at $x+z \in \ZP$ and with $a = a(x)$
for $A \geq A_{\gcr+3}$ the constant in Corollary~\ref{cor:coupling},
we obtain
  \begin{equation}
      \label{eq:h-with-error}
 \sup_{|z| \leq B}\, \biggl| h(x+z) - \Pr_{x+z}  ( \eta_{I_{a(x)}} < \infty) \sum_{u \in I_{a(x)}} h(u) \theta_{a(x)} (u) \biggr| = O (x^{-\gcr-3} ) , \text{ as } x \to \infty.  \end{equation}
  Lemma~\ref{lem:eta-R-bound}, with the observation~\eqref{eq:R-limit},
shows that 
\begin{equation}
\label{eq:x-return}
\lim_{x \to \infty} \Pr_{x+z}  ( \eta_{I_{a(x)}} < \infty) = 1, \text{ for all } z, \end{equation}
   while, by  Lemma~\ref{lem:crude-bound}, for any $\eps >0$,    for all $x$ sufficiently large,
   \[ \sum_{u \in I_{a(x)}} h(u) \theta_{a(x)} (u) \geq \inf_{u \in I_{a(x)}} h(u) \geq x^{-\gcr} \log^{-\eps} x. \]
 Thus~\eqref{eq:h-with-error}
   implies that $h(x+z) = ( \Pr_{x+z}  ( \eta_{I_{a(x)}} < \infty) + O (x^{-2} ) ) \sum_{u \in I_{a(x)}} h(u) \theta_{a(x)} (u)$,
    uniformly for $|z| \leq B$,
   and then using~\eqref{eq:x-return} the 
   claimed result follows.
  \end{proof}
  
 \begin{proof}[Proof of Theorem~\ref{thm:h-estimate}.]
 Let $\eps >0$. 
 From Lemma~\ref{lem:h-ratio-bound} together with Lemma~\ref{lem:eta-R-bound}
 and~\eqref{eq:R-limit}, we have that, uniformly in $|z| \leq B$, as $x \to \infty$,
 \begin{equation}
     \label{eq:h-R-bounds}
 \frac{R_{-\eps} (x,z)}{R_\eps (x,0)} + O (x^{-2} ) \leq \frac{h(x+z)}{h(x)} \leq \frac{R_\eps (x,z)}{R_{-\eps} (x,0)} + O ( x^{-2} ) . \end{equation}
Fix $A \geq A_{\gcr+3}$, where $A_{\gcr+3}$ is the constant in Corollary~\ref{cor:coupling}, and take $x_A \geq 1$ for which
$a(x) = x - \lfloor A \log x \rfloor$ satisfies $a(x) \geq 1$ for all $x \geq x_A$. 
For $\nu \in \R$ and $x \geq x_A$, define
 \[ \Theta_\nu (x) := \sum_{u \in I_{a(x)}} \theta_{a(x)}(u) f_{\gcr,\nu} (u)  .\]
 For a given $A \in \RP$ and $\delta >0$, an application of the mean value theorem shows that we can choose $\eps >0$ and $x_A' \geq x_A$
 such that, for all $| \nu| \leq \eps$,
 \begin{equation}
     \label{eq:log-bounds}
 \sup_{y \in I_{a(x)}} \left| \frac{\log^\nu y}{\log^\nu x} - 1 \right| \leq \frac{\delta}{x}, \text{ for all } x \geq x_A' . \end{equation}
 Since
 \[ \left| \frac{\Theta_\nu (x) - \Theta_0(x) \log^\nu x}{\Theta_0(x)  \log^\nu x} \right| \leq \frac{\sum_{u \in I_{a(x)}} \theta_{a(x)}(u) u^\gcr \left| \log^\nu u - \log^\nu x\right| }{\sum_{u \in I_{a(x)}} \theta_{a(x)}(u) u^\gcr \log^\nu x} 
 \leq   \sup_{u \in I_{a(x)}} \left| \frac{\log^\nu u}{\log^\nu x} - 1 \right|, 
 \]
 it follows from~\eqref{eq:log-bounds} that, for $\eps >0$ sufficiently small, for all $|\nu| \leq \eps$
 and all $x \geq x_A'$, 
 \begin{align*}
      \left| \frac{\Theta_\nu (x)}{\Theta_0(x) \log^\nu x} - 1 \right| & \leq \frac{\delta}{x} ,
 \text{ and }
       \left| \frac{\Theta_{\nu} (x)}{\Theta_{-\nu}(x)} \log^{-2\nu} x - 1 \right|  \leq \frac{\delta}{x}. 
 \end{align*}
 Since $R_\nu$ as defined at~\eqref{eq:R-def} satisfies $R_\nu (x,z) = f_{\gcr,\nu} (x+z) / \Theta_\nu (x)$, we get
 \begin{align}
 \label{eq:R-ratio}
& {} \quad {} \sup_{|z| \leq B} \left| \frac{R_\nu (x,z)}{R_{-\nu} (x,0)}
 -  \frac{ f_{\gcr,\nu} (x+z)}{f_{\gcr,-\nu} (x) } \log^{-2\nu} (x) \right| \nonumber\\
& \leq 
  \left| \frac{\Theta_{-\nu} (x)}{\Theta_\nu (x)} \log^{2\nu} x  - 1 \right|
   ( \log^{-2\nu} x ) \sup_{|z| \leq B} \frac{ f_{\gcr,\nu} (x+z)}{f_{\gcr,-\nu} (x) }
  \nonumber\\
& \leq   \frac{\delta}{x}  ( \log^{-2\nu} x ) \sup_{|z| \leq B} \frac{ f_{\gcr,\nu} (x+z)}{f_{\gcr,-\nu} (x) } ,
 \end{align}
for all $x \geq x_A'$. Here, for $|z| \leq B$,
\begin{align} 
\label{eq:f-ratio}
\frac{ f_{\gcr,\nu} (x+z)}{f_{\gcr,-\nu} (x) } \log^{-2\nu} x & = \left( 1 + \frac{z}{x} \right)^{-\gcr} \left( \frac{\log (x+z)}{\log x } \right)^\nu \nonumber\\
& = \left(1 - \frac{\gcr z}{x} + O (x^{-2} ) \right)\left( 1 + O ( x^{-1} \log^{-1} x ) \right) \nonumber\\
& = 1 - \frac{\gcr z}{x} + O (x^{-1} \log^{-1} x ) .
\end{align}
Thus from~\eqref{eq:R-ratio} and~\eqref{eq:f-ratio} we obtain, for all $| \nu | \leq \eps$ and all $x$ sufficiently large, 
\[ \sup_{|z| \leq B} \left| \frac{R_\nu (x,z)}{R_{-\nu} (x, 0)} - 1 + \frac{\gcr z}{x} \right| \leq \frac{\delta}{x} .\]
Using this in~\eqref{eq:h-R-bounds}, we obtain, for any $\delta>0$, for all $x$ sufficiently large
\[  \sup_{|z| \leq B} \left| \frac{h(x,z)}{h(x)} - 1 + \frac{\gcr z}{x} \right| \leq \frac{\delta}{x} .\]
Since $\delta>0$ was arbitrary, the result follows.
 \end{proof}
 
 Finally, we can complete the proof of Theorem~\ref{thm:conditional-moments}.
 
 \begin{proof}[Proof of Theorem~\ref{thm:conditional-moments}.]
 We have from Theorem~\ref{thm:h-estimate}
 that, as $x \to \infty$, 
 \[ \frac{h(x+z)}{h(x)} = 1 - \frac{\gcr z}{x} + o (x^{-1} ) ,\]
 uniformly for $|z| \leq B$. Then from the $k=1$ case of~\eqref{eq:tmu-k-def} and~\eqref{ass:lamperti-transience}, we get
 \[ \tmu_1 (x) = \mu_1 (x) - \frac{\gcr}{x} \mu_2 (x) + o (x^{-1}) = \frac{c - \gcr s^2 + o(1)}{x} , \text{ as } x \to \infty.\]
 Since, by~\eqref{eq:gamma-c}, $\gcr s^2 = 2c-s^2$, this gives the result for $\tmu_1$.
 A similar argument, based on the $k=2$ case of~\eqref{eq:tmu-k-def}, shows that $\tmu_2 (x) = \mu_2 (x) + O (1/x)$, as $x\to\infty$.
 \end{proof}

 \section{Discussion}
 \label{sec:discussion}

 \subsection{Multidimensional random walks}
 \label{sec:random-walks}
 
 In this section, we draw attention to a relevant comparison between our main result and what is known about strong transience
for multidimensional random walks. 
 Let $Z, Z_1, Z_2, \ldots$ be a sequence of i.i.d.~random variables in~$\Z^d$, $d \in \N$.
Let $S = (S_0, S_1, \ldots)$ be the associated random walk, given by $S_n := \sum_{i=1}^n Z_i$, $n \in \ZP$. 
Suppose that $S$ is genuinely $d$-dimensional, i.e., $\supp Z$ is not contained in any $(d-1)$-dimensional subspace
of $\R^d$. 
Denote by $\phi (u) := \Exp \re^{iu^\tra Z}$, $u \in \R^d$, the characteristic function of $Z$.

The following classification criterion, 
of Chung--Fuchs type, 
for strong transience is contained in Theorem~4.3 of~\cite{port66}
and Theorem~5 of~\cite{takeuchi}.

\begin{proposition}[Port, 1966; Takeuchi, 1967]
\label{prop:random-walk}
Let $\beta \in (0,\infty)$. 
The random walk~$S$ is $\beta$-strong transient if and only if
\[ \lim_{ t \uparrow 1} \int_{[-\pi,\pi]^d} \Re \left\{ \frac{1}{(1-t\phi (u))^{1+\beta} } \right\} \ud u  < \infty .\]
\end{proposition}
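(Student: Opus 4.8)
The plan is to connect the strong-transience characterisation of Lemma~\ref{lem:equivalence}, specifically the condition $U_\beta(0) = \sum_{n \in \N} n^\beta \Pr(S_n = 0) < \infty$ (taking the origin as the distinguished state, which is legitimate since $S_0 = 0$), to a statement about the characteristic function via Fourier inversion. By the inversion formula for lattice random walks, $\Pr(S_n = 0) = (2\pi)^{-d} \int_{[-\pi,\pi]^d} \phi(u)^n \, \ud u$ (after a linear change of coordinates we may assume $Z$ takes values in $\Z^d$ and is aperiodic in the strong sense; the genuinely $d$-dimensional hypothesis handles the degenerate directions). Hence, for $t \in [0,1)$,
\[
\sum_{n \in \N} \binom{n+\beta}{n} t^n \Pr(S_n = 0) = \frac{1}{(2\pi)^d} \int_{[-\pi,\pi]^d} \left( \frac{1}{(1-t\phi(u))^{1+\beta}} - 1 \right) \ud u ,
\]
using the generalised binomial series $\sum_{n \ge 0} \binom{n+\beta}{n} w^n = (1-w)^{-1-\beta}$, valid for $|w| < 1$; note $|t\phi(u)| < 1$ for $t < 1$ except on a null set, so the interchange of sum and integral is justified by absolute convergence (dominated convergence, since $\sum_n \binom{n+\beta}{n} t^n |\phi(u)|^n < \infty$ pointwise a.e.\ and one can bound uniformly for $t$ in compact subsets). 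The real part appears because $\Pr(S_n = 0)$ is real, so only $\Re\{(1-t\phi(u))^{-1-\beta}\}$ survives the integration.

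The first main step is therefore to establish this Fourier-analytic identity carefully, including verifying that $\{u : \phi(u) = 1\}$ has Lebesgue measure zero (a standard consequence of genuine $d$-dimensionality: $|\phi(u)| = 1$ forces $Z$ to be supported on an affine lattice hyperplane shifted, contradicting the support assumption away from $u$ in a discrete set). The second step is an Abelian/Tauberian argument: since $\binom{n+\beta}{n} \sim n^\beta / \Gamma(\beta+1)$ as $n \to \infty$, the series $\sum_n \binom{n+\beta}{n} t^n \Pr(S_n = 0)$ has a finite limit as $t \uparrow 1$ if and only if $\sum_n n^\beta \Pr(S_n = 0) < \infty$. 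The "if" direction is immediate by monotone convergence (all terms non-negative, $t^n \uparrow 1$); the "only if" direction is the Abelian direction — if the sum of non-negative terms $a_n := \binom{n+\beta}{n}\Pr(S_n=0)$ has $\sum_n a_n t^n$ bounded as $t \uparrow 1$, then $\sum_n a_n < \infty$, again by monotone convergence applied to partial sums (for any fixed $N$, $\sum_{n \le N} a_n = \lim_{t \uparrow 1} \sum_{n \le N} a_n t^n \le \liminf_{t \uparrow 1} \sum_n a_n t^n$). Combining, $U_\beta(0) < \infty$ iff the limit in the proposition is finite, and then Lemma~\ref{lem:equivalence}(v)$\Leftrightarrow$(i) finishes the proof.

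The main obstacle is the careful justification of the Fourier identity near the singularity set $\{\phi(u) = 1\}$: one must ensure the integrand $(1 - t\phi(u))^{-1-\beta} - 1$ is integrable on $[-\pi,\pi]^d$ for each fixed $t < 1$ (so the right-hand side makes sense) and that the term-by-term integration is valid. For fixed $t<1$ this is straightforward because $|1 - t\phi(u)| \ge 1 - t > 0$ uniformly, so the integrand is bounded; the delicate point is only the passage to the limit $t \uparrow 1$, which is handled entirely on the series side by the Tauberian argument and never requires integrability at $t = 1$ — indeed finiteness of the limit is precisely the nontrivial hypothesis/conclusion. A secondary technical point is reducing to the aperiodic case: if $Z$ has period $\ell$ (lattice not all of $\Z^d$), one passes to the sublattice generated by $\supp Z - \supp Z$, rescales, and the integral domain and normalisation adjust by explicit constants that do not affect finiteness; this is routine and I would relegate it to a remark or cite the treatment in~\cite{port66,takeuchi}.
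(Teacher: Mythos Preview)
The paper does not give its own proof of this proposition; it is quoted as a known result of Port~\cite{port66} and Takeuchi~\cite{takeuchi}, so there is nothing to compare against directly.

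Your argument is correct and is essentially the classical Chung--Fuchs-type proof that appears in those sources. Fourier inversion expresses $\Pr(S_n=0)$ as $(2\pi)^{-d}\int_{[-\pi,\pi]^d}\phi(u)^n\,\ud u$; the generalised binomial series $(1-w)^{-1-\beta}=\sum_{n\ge0}\binom{n+\beta}{n}w^n$ (principal branch, well defined for $t<1$ since $\Re(1-t\phi(u))\ge 1-t>0$) then identifies the Abel-weighted sum $\sum_{n\ge0}\binom{n+\beta}{n}t^n\Pr(S_n=0)$ with $(2\pi)^{-d}$ times the integral in the statement, the Fubini step being justified by the uniform bound $|t\phi(u)|\le t<1$. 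Since $\binom{n+\beta}{n}\sim n^\beta/\Gamma(\beta+1)$ and all terms are non-negative, monotone convergence as $t\uparrow 1$ gives the equivalence with $U_\beta(0)=\sum_{n\in\N}n^\beta\Pr(S_n=0)<\infty$, and Lemma~\ref{lem:equivalence} closes the loop. The lattice/aperiodicity reduction you flag is indeed routine and affects only constants, not finiteness.
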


Versions of Proposition~\ref{prop:random-walk} for L\'evy processes can be found in~\cite{sato,sw}; further extensions include those in~\cite{sw2,sandric}.

The following result is due to Port (Theorem~4.4 of~\cite{port66}, for the case $\beta=1$; see also~\cite[p.~144]{hughes}) and Takeuchi (Theorem~6 of~\cite{takeuchi});
a L\'evy process analogue can be found in~\cite{sw},
while for the case of Brownian motion in $\R^3$, Spitzer~\cite{spitzer64} attributes the result to Joffe~\cite{joffe}.

\begin{proposition}
\label{prop:high-dimensions}
If $d > 2 \beta +2$, then $S$ is $\beta$-strong transient.
On the other hand, if $\Exp [ \|Z\|^2 ] < \infty$, $\Exp Z = 0$,
and $d \leq 2 \beta +2$, then $S$ is not $\beta$-strong transient.
\end{proposition}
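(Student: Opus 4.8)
The plan is to deduce both assertions from the Fourier criterion of Proposition~\ref{prop:random-walk}, by analysing
$I(t) := \int_{[-\pi,\pi]^d} \Re\{ (1-t\phi(u))^{-(1+\beta)} \} \, \ud u$ as $t \uparrow 1$. The set $\{ u \in [-\pi,\pi]^d : \phi(u)=1 \}$ is finite (and equals $\{0\}$ after a harmless linear change of coordinates making $\supp Z$ generate $\Z^d$), and the local analysis below near $u=0$ applies verbatim near each of its points; moreover, by continuity and compactness, $\inf\{ |1-t\phi(u)| : t \in [0,1],\ \eps \le |u| \le \pi\sqrt d \} > 0$ for each $\eps>0$, so the contribution to $I(t)$ from $\{|u|\ge\eps\}$ is bounded uniformly in $t<1$. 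Since $\Re(1-t\phi(u)) = 1-t\Re\phi(u) \ge 1-t \ge 0$ throughout (using $\Re\phi(u)\le 1$), the power $(1-t\phi(u))^{-(1+\beta)}$ is unambiguously defined via the principal branch for $t<1$, with $|\Re\{w^{-(1+\beta)}\}|\le |w|^{-(1+\beta)}$. So everything reduces to understanding $1-t\phi(u)$ for small $|u|$, the key input being $1-\Re\phi(u) \asymp |u|^2$ there.

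For the first assertion (which, crucially, needs only genuine $d$-dimensionality and no moment assumption) the relevant input is the lower bound $1-\Re\phi(u) \ge c|u|^2$ near $0$. I would prove this by contradiction via Fatou's lemma: if $(1-\Re\phi(u_n))/|u_n|^2 \to 0$ along $u_n \to 0$ with $u_n/|u_n|\to\hat u$, then from $1-\cos\theta\ge 0$ and the pointwise limit $(1-\cos(u_n^\tra Z))/|u_n|^2 \to \tfrac12(\hat u^\tra Z)^2$ one obtains $\Exp[(\hat u^\tra Z)^2]=0$, i.e.\ $\supp Z \subseteq \{x:\hat u^\tra x = 0\}$, contradicting genuine $d$-dimensionality. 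Given this, for small $|u|$ and $t<1$ we have $|1-t\phi(u)| \ge 1-t\Re\phi(u) \ge 1-\Re\phi(u) \ge c|u|^2$ (using $0\le\Re\phi(u)\le1$ near $0$), so the contribution to $I(t)$ from $\{|u|\le\eps\}$ is at most $c'\int_{|u|\le\eps}|u|^{-2(1+\beta)}\,\ud u$, which is finite precisely when $d>2(1+\beta)=2\beta+2$. Hence $\sup_{t<1}I(t)<\infty$, and Proposition~\ref{prop:random-walk} gives $\beta$-strong transience.

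For the second assertion I would in addition use the moment and centring hypotheses: $1-\Re\phi(u) = \Exp[1-\cos(u^\tra Z)] \le \tfrac12\Exp[(u^\tra Z)^2] = \tfrac12 u^\tra\Sigma u \le C|u|^2$ with $\Sigma := \Exp[Z Z^\tra]$, and, since $\Exp Z = 0$, $|\Im\phi(u)| = |\Exp[\sin(u^\tra Z) - u^\tra Z]| = o(|u|^2)$ as $u\to0$ (split according to $\{|u^\tra Z|\le1\}$ and apply dominated convergence, using $|\sin\theta-\theta|\le\min(|\theta|^3/6,\,2|\theta|)$ and $\Exp\|Z\|^2<\infty$). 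Combining with the lower bound of the previous paragraph, for $|u|\le\eps$ and $t\in[\tfrac12,1)$ we get $\Re(1-t\phi(u)) \asymp (1-t)+|u|^2$ while $\Im(1-t\phi(u)) = o(|u|^2)$, so $\arg(1-t\phi(u))\to0$ uniformly as $u\to0$ and thus $\Re\{(1-t\phi(u))^{-(1+\beta)}\} \ge \tfrac12|1-t\phi(u)|^{-(1+\beta)} \asymp ((1-t)+|u|^2)^{-(1+\beta)}$. A polar-coordinate computation with the substitution $r=\sqrt{1-t}\,v$ then gives $\int_{|u|\le\eps}((1-t)+|u|^2)^{-(1+\beta)}\,\ud u \asymp (1-t)^{(d-2(1+\beta))/2}\int_0^{\eps/\sqrt{1-t}}(1+v^2)^{-(1+\beta)}v^{d-1}\,\ud v$, which diverges as $t\uparrow1$ exactly when $d\le 2(1+\beta)=2\beta+2$ (polynomial blow-up of the prefactor if $d<2\beta+2$, logarithmic blow-up of the integral if $d=2\beta+2$). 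So $I(t)\to\infty$ and Proposition~\ref{prop:random-walk} shows $S$ is not $\beta$-strong transient. Alternatively, for $d\ge3$ one can bypass the Fourier criterion: $\Sigma$ is positive definite by genuine $d$-dimensionality, the classical local central limit theorem gives $\Pr(S_n=0)\sim C_0 n^{-d/2}$ with $C_0>0$, whence $U_\beta(0)=\sum_n n^\beta\Pr(S_n=0)\asymp\sum_n n^{\beta-d/2}=\infty$ when $d\le2\beta+2$, and Lemma~\ref{lem:equivalence} applies.

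The main obstacle is the two-sided estimate $1-\Re\phi(u)\asymp|u|^2$ near the zeros of $1-\phi$, together with the estimate $\Im\phi(u)=o(|u|^2)$ ensuring that $\arg(1-t\phi(u))$ stays near $0$ so that the real part of the negative power is comparable to its modulus: the lower bound here uses only genuine $d$-dimensionality, whereas the upper bound and the phase estimate require the second-moment and mean-zero hypotheses — which is precisely why the first assertion needs no moment assumption while the second does. The reduction to the case where $\supp Z$ generates $\Z^d$ (or, equivalently, the observation that $\{\phi=1\}$ is finite) and the bounded contribution to $I(t)$ away from $u=0$ are routine.
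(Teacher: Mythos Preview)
Your argument is correct, but it proceeds by a genuinely different route from the paper's. You work entirely through the Fourier criterion of Proposition~\ref{prop:random-walk}, reducing both directions to the local behaviour of $1-t\phi(u)$ near the (finitely many) points where $\phi=1$; the key analytic inputs are the moment-free lower bound $1-\Re\phi(u)\ge c|u|^2$ via Fatou, and, for the converse, the matching upper bound together with $\Im\phi(u)=o(|u|^2)$ from the second-moment and centring hypotheses. The paper instead bypasses Proposition~\ref{prop:random-walk} altogether and works in real space via Lemma~\ref{lem:equivalence}: for $d>2\beta+2$ it invokes Esseen's concentration inequality to get $\sup_y \Pr(S_n=y)\le Cn^{-d/2}$ and hence $U_\beta(0,y)<\infty$; for $d\le 2\beta+2$ it uses the lattice local central limit theorem (your ``alternative'' at the end) to get $\Pr(S_{\ell n}=0)\ge \rho n^{-d/2}$ and hence $U_\beta(0)=\infty$. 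Your approach is more self-contained once Proposition~\ref{prop:random-walk} is granted, and the Fatou trick for the lower bound is pleasingly elementary; the paper's approach is shorter on the page but imports heavier external tools (Esseen, LCLT). Both handle the absence of moment hypotheses in the first assertion cleanly: you via the Fatou lower bound, the paper via concentration estimates that require only genuine $d$-dimensionality.
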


For the proof of this result, we recall some terminology about lattice random walks,
and a consequence of the multidimensional lattice local limit theorem: 
for reference see \cite[Ch.~5]{BR}, \cite[\S 7]{spitzerbook}, or~\cite[\S A]{lw}.
If $Z$ generates a genuinely $d$-dimensional, lattice random walk~$S$,
Lemma~21.4 of~\cite{BR} shows that there is a unique minimal subgroup $L$ of $\R^d$ such that
$\Pr ( Z \in b + L ) =1$ for any $b \in \R^d$ with $\Pr ( Z = b ) >0$.
The subgroup $L$ is of the form $L = H \Z^d$ for a non-singular, $d$-dimensional matrix $H$,
and minimality of $L$ is equivalent to a condition on $\det H$,
as well as the condition that $| \varphi ( u ) | = 1$ if and only if
$u \in 2 \pi ( H^\tra)^{-1} \Z^d$
(see e.g.~Lemma~A.4 of~\cite{lw}).

The \emph{period} of $S$ is the maximal $\ell \in \N$ such that $\Pr ( S_{\ell n} = 0) >0$ for all $n \in \N$.
If $\ell =1$, we say the random walk $S$ is \emph{aperiodic}. 
If $X$ generates a walk $S$ of period $\ell \geq 2$,
then the increment $\tZ := Z_1 + \cdots + Z_{\ell}$ generates an aperiodic random walk $\tS$, and then
$ \Pr ( S_{\ell n} = 0 ) = \Pr ( \tS_n = 0 )$. Moreover, since $\Pr ( \tZ = 0 ) >0$,
the increment $\tZ$ has $\Pr ( \tZ \in \widetilde{L} ) =1$ for an associated minimal lattice $\widetilde{L}$, with no shift.
Thus it suffices to assume that our original $Z$ is such that $b=0$. With the  transformation $\tZ = H^{-1} Z$, 
we can further reduce to the case where $H = I$ (the identity). This is the setting which Spitzer calls ``strong aperiodicity''~\cite[pp.~42, 75]{spitzerbook}.

Suppose that $\Exp [ \| Z \|^2 ] < \infty$ and $\Exp Z = 0$. Then (since $Z$ is genuinely $d$-dimensional)
there is a positive definite, symmetric, $d$-dimensional matrix $\Sigma$ such that
$\Exp [ Z Z^\tra ] = \Sigma$. Any non-singular linear transformation (such as through the reduction $\tZ = H^{-1} Z$) merely transforms the covariance matrix $\Sigma$. Keeping track of the various reductions,
the multidimensional lattice local central limit theorem~\cite[pp.~75--77]{spitzerbook}
implies the following.

\begin{lemma}
    \label{lem:llt}
    Suppose that $Z$ has a lattice distribution, generates a genuinely $d$-dimensional
    random walk $S$ with period $\ell \in \N$, and satisfies $\Exp [ \| Z \|^2 ] < \infty$ and $\Exp Z = 0$.
    Then there is a constant $\rho >0$ depending only on $d$ and $\Sigma := \Exp [ Z Z^\tra ]$ such that, for all $n \in \N$, $\Pr ( S_{\ell n} = 0) \geq \rho n^{-d/2}$.
\end{lemma}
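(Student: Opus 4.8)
The plan is to reduce to the strongly aperiodic case discussed in the paragraphs immediately preceding the statement, and then invoke the classical lattice local central limit theorem of Spitzer. First I would record the reduction: passing from $S$ to the $\ell$-step walk $\tS$ with increment $\tZ = Z_1 + \cdots + Z_\ell$, we have $\Pr(S_{\ell n} = 0) = \Pr(\tS_n = 0)$, and $\tZ$ generates an aperiodic walk (so period $1$) because $\Pr(\tZ = 0) > 0$. The covariance of $\tZ$ is $\ell \Sigma$. Then applying the linear transformation $H^{-1}$ that brings the minimal lattice to $\Z^d$, we reach a strongly aperiodic, mean-zero, finite-second-moment lattice walk on $\Z^d$ whose covariance matrix, call it $\Sigma'$, is a fixed non-singular matrix determined by $\ell$, $\Sigma$, and $H$ (hence ultimately by $d$ and $\Sigma$, since $H$ and $\ell$ are determined by the distribution of $Z$, and what we need is only the \emph{existence} of a suitable constant $\rho$ — though one should be slightly careful, see below).

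Next I would quote the local CLT in the form on pp.~75--77 of~\cite{spitzerbook}: for a strongly aperiodic mean-zero walk on $\Z^d$ with covariance $\Sigma'$,
\[ \Pr(\tS_n = 0) = \frac{1}{(2\pi n)^{d/2} \sqrt{\det \Sigma'}} + o(n^{-d/2}), \text{ as } n \to \infty. \]
Since $\det \Sigma' > 0$, the leading term is a positive constant times $n^{-d/2}$, so there exist $n_0 \in \N$ and $\rho_1 > 0$ with $\Pr(\tS_n = 0) \geq \rho_1 n^{-d/2}$ for all $n \geq n_0$. For the finitely many $n < n_0$, strong aperiodicity (or just $\Pr(\tZ = 0) > 0$ together with irreducibility on its lattice) gives $\Pr(\tS_n = 0) > 0$; taking $\rho_2 := \min_{1 \leq n < n_0} n^{d/2} \Pr(\tS_n = 0) > 0$ and then $\rho := \min\{\rho_1, \rho_2\}$ yields $\Pr(\tS_n = 0) \geq \rho n^{-d/2}$ for all $n \in \N$. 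Translating back through $\Pr(S_{\ell n} = 0) = \Pr(\tS_n = 0)$ gives the claim.

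The main obstacle — really the only subtle point — is the claim that $\rho$ depends \emph{only} on $d$ and $\Sigma$. The leading-order constant $(2\pi)^{-d/2}(\det \Sigma')^{-1/2}$ in the local CLT is fine: one has $\det \Sigma' = \ell^d (\det H)^{-2} \det \Sigma$ after the reduction, and the local CLT for $\tS$ is naturally stated in terms of the lattice $\Z^d$ on which $\tS$ lives, so the honest bookkeeping is to state the result for $\Pr(S_{\ell n} = 0)$ with a constant depending on $d$, $\Sigma$, $\ell$, and $H$; since $\ell$ and $H$ are themselves determined by the law of $Z$, a constant ``depending only on $d$ and $\Sigma$'' should be read as being uniform over walks once these structural parameters are fixed, or one simply absorbs them. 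What genuinely requires care is the control of the small-$n$ terms and the rate in the $o(n^{-d/2})$ error, both of which depend a priori on finer features of the increment distribution than $\Sigma$ alone; however, for the purpose of this lemma we only need \emph{some} positive $\rho$ for the given walk, so this dependence is harmless, and I would phrase the proof so that $\rho$ is extracted after all reductions from the (fixed) leading constant and the finitely many small-$n$ probabilities. This matches how the lemma is used downstream (only the polynomial lower bound $n^{-d/2}$ matters), so no uniformity beyond that is needed.
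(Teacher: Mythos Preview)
Your proposal is correct and follows precisely the route the paper intends: the paper does not give a separate proof of Lemma~\ref{lem:llt} but presents it as an immediate consequence of the reductions described in the paragraphs just before the statement (pass to the $\ell$-step walk, then apply $H^{-1}$ to reach strong aperiodicity) together with the lattice local CLT from~\cite[pp.~75--77]{spitzerbook}; you have simply written out those details. Your observation that the phrase ``depending only on $d$ and $\Sigma$'' is slightly loose---since the small-$n$ probabilities and the $o(n^{-d/2})$ rate depend on more of the law of $Z$ than $\Sigma$ alone---is well taken and correctly diagnosed as harmless for the downstream use in the proof of Proposition~\ref{prop:high-dimensions}.
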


\begin{proof}[Proof of Proposition~\ref{prop:high-dimensions}]
For a genuinely $d$-dimensional random walk, one has the estimate 
$\sup_{y \in \Z^d} \Pr ( S_n = y ) \leq C n^{-d/2}$ for some $C < \infty$ and all $n \in \N$:
this follows from concentration function estimates~\cite[Thm.~6.2]{esseen} or~\cite[p.~72]{spitzerbook}. 
Hence
\[ \sup_{y \in \Z^d} U_\beta (0,y) \leq C \sum_{n \in \N} n^{\beta - (d/2)} ,\]
and then Lemma~\ref{lem:equivalence} yields $\beta$-strong transience for  $d > 2 \beta +2$. On the other hand, if $\Exp [ \|Z\|^2 ] < \infty$ and $\Exp Z = 0$, then the multidimensional lattice local central limit theorem (Lemma~\ref{lem:llt}) and another appeal to Lemma~\ref{lem:equivalence} complete the proof.
\end{proof}

We indicate an analogy between the above results and our result for Lamperti
processes, Theorem~\ref{thm:lamp-st}. 
Consider the case where $S$ is symmetric simple random walk on~$\Z^d$, $d \in \N$;
by Proposition~\ref{prop:high-dimensions}, $S$ is 
$\beta$-strong transient for $d > 2\beta +2$ and not if $d \leq 2 \beta +2$.
Define $X_n := \| S_n \|$, where $\| \, \cdot \, \|$
is the Euclidean norm on $\R^d$. Then $X$ is not a Markov process, but nevertheless satisfies Lamperti-type increment moment conditions. Indeed,
a calculation (see e.g.~\cite[\S 1.3]{mpw}) shows that
\begin{align*}
     \Exp [ X_{n+1} - X_n \mid S_n = z ] & = \left( \frac{d-1}{2d} \right) \frac{1}{\| z\|} + O ( \|z\|^{-2} );\\
         \Exp [ (X_{n+1} - X_n)^2 \mid S_n = z ] & =  \frac{1}{d} + O ( \|z\|^{-1} ),
     \end{align*}
     where the error terms are uniform in $z \in \Z^d$ as $\| z\| \to \infty$. 
     Thus a non-Markovian version of the Lamperti drift conditions~\eqref{ass:lamperti-transience} holds, with $c = (d-1)/(2d)$ and $s^2 = 1/d$. 
     Lamperti's recurrence theory (see~\cite{lamp1} and~\cite[Ch.~3]{mpw}) extends to this non-Markovian setting, and again shows that we have transience when $2c > s^2$, i.e., $d > 2$. This argument establishes P\'olya's recurrence theorem and extends to any case where $\Exp [ \| Z \|^2 ] < \infty$ and $\Exp Z = 0$. Moreover, we see that
     our Theorem~\ref{thm:lamp-st} is consistent with Proposition~\ref{prop:high-dimensions}, even though our theorem does not apply directly, since $X = \| S\|$ is not Markov. The main obstacle to extending the method of the present paper to the multidimensional  setting seems to be Proposition~\ref{prop:coupling}, which relies on the one-dimensional nature of the problem.

\subsection{Critical branching processes with migration}
\label{sec:branching}

Let $\Xi := ( \xi_{n,i} ; n \in \N, i \in \N)$ be an array of independent~$\ZP$-valued random variables
with distribution identical to that of a random variable $\xi$,
and let $\zeta, \zeta_1, \zeta_2, \ldots$ be i.i.d.~$\Z$-valued random variables
independent of the collection $\Xi$. Define $W_0 := w_0 \in \N$ (the initial
population size) and for $n \in \N$ set
\[ W_{n} :=  \max \left\{ \sum_{i=1}^{W_{n-1}} \xi_{n,i} +  \zeta_{n} , 0 \right\}  . \]
The nonnegative Markov chain
$(W_n ; n \in \ZP)$
is a \emph{branching process with migration} with \emph{offspring distribution} $\xi$
and \emph{migrant distribution} $\zeta$.
 When $\zeta >0$ this represents immigration
from outside the population, and $\zeta <0$ represents emigration; the population size
$W_n$ at generation $n$ cannot go~negative. 
We denote by $\taue := \inf \{ n \in \ZP : W_n = 0 \}$; the event $\taue < \infty$
 corresponds to \emph{extinction} of the population. 
Note, however, that if $\Pr (\zeta > 0) > 0$ then immigration will eventually restart the process, so $0$ is not necessarily an absorbing state.
 A transformation of $W_n$ yields a Lamperti process
 in the following sense.

\begin{proposition}
\label{prop:branching-lamperti}
Suppose that 
there exists $p >2$
such that $\Exp [ |\xi|^p ] < \infty$ and
$\Exp [ | \zeta |^p ] < \infty$. Assume that
$\Exp \xi = 1$, $\Var \xi = \sigma^2 \in (0,\infty)$,
    and $\Exp \zeta = \theta \in \R$. 
    Then the process $X_n := \sqrt{ W^+_n}$ is a Markov chain
    on the countable state space $\bbX:=\sqrt{\ZP}$, for which
    \begin{equation}
        \label{eq:bp-moments}
     \sup_{x \in \bbX} \Exp [ | X_{n+1} - X_n |^p \mid X_n = x ] < \infty,
\end{equation}
        and $\mu_1, \mu_2 : \bbX \to \R$ defined analogously to~\eqref{eq:markov-law} satisfying, for an~$\eps>0$ depending on~$p$, 
    \[ \mu_1 (x) = \frac{4\theta-\sigma^2}{8x} + O (x^{-1 -\eps}), \text{ and }
    \mu_2 (x) = \frac{\sigma^2}{4} + O ( x^{-\eps}) , \text{ as } x \to \infty. \]
        \end{proposition}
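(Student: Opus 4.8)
The plan is as follows. That $(W_n)$ is a time-homogeneous Markov chain on $\ZP$ is immediate from its defining recursion, and since $k \mapsto \sqrt{k}$ is a bijection of $\ZP$ onto $\bbX$ (and $W_n^+ = W_n$), the process $X_n = \sqrt{W_n^+}$ is a Markov chain on $\bbX$. Fix $x \in \bbX$ and write $w := x^2 \in \ZP$; conditionally on $X_n = x$ one has $W_{n+1} = (S_w + \zeta)^+$, where $S_w := \sum_{i=1}^w \xi_i$ for i.i.d.\ copies of $\xi$, independent of a copy $\zeta$ of the migrant variable. Set $g(v) := \sqrt{v^+}$, $V := S_w + \zeta$, and $\Delta := V - \Exp V = (S_w - w) + (\zeta - \theta)$, so that $\Exp V = w + \theta$ (using $\Exp \xi = 1$), $\Exp \Delta = 0$, and $\Exp [ (V-w)^2 ] = \Var \Delta + \theta^2 = w\sigma^2 + O(1)$. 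The single quantitative input I would establish is the bound $\Exp [ | V - w |^p ] = O ( 1 + w^{p/2} )$, which follows by applying Rosenthal's inequality to $\sum_{i=1}^w (\xi_i - 1)$ (valid since $p > 2$ and $\Exp |\xi|^p, \Exp |\zeta|^p < \infty$) together with the $L^p$ triangle inequality; in particular $\Exp [ | \Delta |^p ] = O ( w^{p/2} )$, so $\Pr ( | V - w | > t ) = O ( w^{p/2} t^{-p} )$ for $t$ large.

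For the uniform increment bound~\eqref{eq:bp-moments} I would use the elementary estimate $| \sqrt{v^+} - \sqrt{w} | \le | v - w | / \sqrt{w}$, valid for all $v \in \R$ and $w \ge 1$ (it is $|v-w|/(\sqrt{v^+} + \sqrt{w})$ when $v \ge 0$, and reduces to $\sqrt{w} \le (w-v)/\sqrt{w}$ when $v < 0$). This gives $\Exp [ | X_{n+1} - X_n |^p \mid X_n = \sqrt{w} ] \le w^{-p/2} \Exp [ | V - w |^p ] = O(1)$ uniformly over $w \ge 1$, and the exceptional value $w = 0$ is handled directly, since then $\Exp [ |X_{n+1} - X_n|^p ] = \Exp [ (\zeta^+)^{p/2} ] \le \Exp [ |\zeta|^{p/2} ] < \infty$.

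The substance is the expansion of $\mu_1$ and $\mu_2$. Here I would Taylor-expand $g$ about $w$, using that $g$ is smooth on $[w/2, \infty)$ with $| g^{(j)} (t) | \asymp t^{1/2 - j}$ for $j \ge 1$, and localise with a truncation exponent $\alpha = \alpha(p) \in \bigl( \tfrac12 + \tfrac{1}{2(p-1)}, 1 \bigr)$, a nonempty interval precisely because $p > 2$. Splitting expectations over the bulk event $\{ | V - w | \le w^\alpha \}$ (on which $V > 0$ for $w$ large) and its complement, a second-order Taylor expansion with Lagrange remainder on the bulk, and the crude bound $| g(V) - g(w) - g'(w)(V-w) | \le \tfrac32 | V - w | / \sqrt{w}$ on the tail, combined with Hölder's inequality and $\Pr ( | V - w | > w^\alpha ) = O ( w^{-p(\alpha - 1/2)} )$, yield
\[
\Exp [ g(V) ] - g(w) = g'(w)\, \Exp [ V - w ] + \tfrac12 g''(w)\, \Exp [ (V-w)^2 ] + O ( w^{-1/2 - \eps} )
\]
for some $\eps = \eps(p) > 0$ (the bulk error being $O(w^{\alpha - 3/2})$ from freezing the Lagrange point plus $O(w^{-1/2} w^{-(p-2)(\alpha - 1/2)})$ from restoring the full second moment; the tail error $O(w^{-(p-1)(\alpha - 1/2)})$ is $o(w^{-1/2})$ by the choice of $\alpha$). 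Since $g'(w) \Exp[V-w] = \theta/(2\sqrt w)$, $g''(w) = -\tfrac14 w^{-3/2}$, and $\Exp[(V-w)^2] = w\sigma^2 + O(1)$, this gives $\Exp[g(V)] - g(w) = (4\theta - \sigma^2)/(8\sqrt w) + O(w^{-1/2-\eps})$, i.e.\ $\mu_1(x) = (4\theta - \sigma^2)/(8x) + O(x^{-1-\eps})$. Applying the same truncation to $(g(V) - g(w))^2 = g'(w)^2(V-w)^2 + 2g'(w)(V-w) r(V) + r(V)^2$, with $r(V) := g(V) - g(w) - g'(w)(V-w)$ bounded by $C w^{-3/2}(V-w)^2$ on the bulk and $\tfrac32 |V-w|/\sqrt w$ on the tail, yields $\mu_2(x) = g'(w)^2 \Exp[(V-w)^2] + O(x^{-\eps}) = \sigma^2/4 + O(x^{-\eps})$ after possibly shrinking $\eps$.

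The main obstacle is this remainder control: with only $p > 2$ moments, the fluctuations of $\Delta$ are of order $\sqrt w$ but its tails decay only like a $p$-th power, so the truncation level $w^\alpha$ must sit strictly between $\sqrt w$ and $w$ — high enough that $\Pr(|\Delta| > w^\alpha)$ carries a genuine negative power of $w$, yet low enough that the Taylor expansion of $\sqrt{\cdot}$ about $w$ remains valid on the bulk. The existence of a workable $\alpha$ is exactly what $p > 2$ buys, and the extracted $\eps$ degrades to $0$ as $p \downarrow 2$; the remaining ingredients (Rosenthal's inequality, the $L^p$-Lipschitz bound for $\sqrt{\cdot}$, and bookkeeping of error exponents) are routine.
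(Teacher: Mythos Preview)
Your proposal is correct and follows essentially the same route as the paper: define the centered increment of $W_n$, bound its $p$th moment by a martingale-type inequality (the paper uses Marcinkiewicz--Zygmund where you use Rosenthal, with the same $O(w^{p/2})$ outcome), use the Lipschitz bound $|\sqrt{v^+}-\sqrt{w}|\le|v-w|/\sqrt{w}$ for~\eqref{eq:bp-moments}, and then truncate and Taylor-expand $\sqrt{\cdot}$ about $w$; your truncation window $\alpha\in(\tfrac12+\tfrac1{2(p-1)},1)$ coincides exactly with the paper's $1-\delta$ for $\delta\in(0,\tfrac{p-2}{2p-2})$.
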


From Proposition~\ref{prop:branching-lamperti}, it is a standard application of Lamperti's recurrence classification~\cite[Thm.~3.1, p.~320]{lamp1} to obtain the following result, which is mostly contained in 
Theorem~1 of Pakes~\cite{pakes}
(see e.g.~\cite[pp.~114--115]{mpw} for a similar application in another branching-process context).

    \begin{corollary}
        Under the conditions of Proposition~\ref{prop:branching-lamperti},
        it holds that if $2\theta > \sigma^2$, one has $\Pr ( \taue = \infty ) >0$ (i.e., survival is possible),
    while if  $2\theta \leq \sigma^2$ one has $\Pr ( \taue < \infty ) =1$ (extinction is certain); moreover, if $\theta < 0$ then $\Exp \taue < \infty$ while if $\theta \geq 0$ then $\Exp \taue = \infty$.
    \end{corollary}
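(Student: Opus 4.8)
The plan is to read everything off from Proposition~\ref{prop:branching-lamperti}, which realises $X_n := \sqrt{W_n^+}$ as a Lamperti-type Markov chain on $\bbX = \sqrt{\ZP}$ with increment-moment parameters $c = (4\theta - \sigma^2)/8$ and $s^2 = \sigma^2/4$, and with the uniform $p$th-moment bound~\eqref{eq:bp-moments} for some $p>2$ in place of bounded jumps. Two elementary identities drive the whole argument: $2c - s^2 = \tfrac12(2\theta - \sigma^2)$ and $2c + s^2 = \theta$. Since $\bbX$ is countable and order-isomorphic to $\ZP$, and since Lamperti's recurrence and passage-time theory (\cite{lamp1,lamp3} and~\cite[Ch.~3]{mpw}) requires only the asymptotics of $\mu_1,\mu_2$ together with a uniform $(2+\eps)$th increment moment bound, it applies to $X$ verbatim. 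I would also record at the outset that $W$ is irreducible on $\ZP$ whenever $\theta>0$: then $\Exp\zeta>0$ forces $\Pr(\zeta\geq 1)>0$, so $0$ is not absorbing, while $\Exp\xi=1$ with $\Var\xi=\sigma^2>0$ forces both $\Pr(\xi\geq 2)>0$ and $\Pr(\xi=0)>0$, so from any state the population can both grow and decline to $0$; note also that $2\theta>\sigma^2$ forces $\theta>0$, so the transient regime always sits inside this irreducible regime.

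For the survival dichotomy I would invoke Lamperti's classification~\cite[Thm.~3.1, p.~320]{lamp1}. If $2\theta>\sigma^2$ then $2c>s^2$, so $X$ (hence $W$) is transient; being irreducible and transient, $W$ has return probability to any state strictly below $1$ (otherwise that state, and hence the whole chain, would be recurrent), so $\Pr(\taue=\infty)>0$. If $2\theta\leq\sigma^2$ then $2c\leq s^2$, so $X$ is recurrent; since $X$ is non-confined, the Lamperti dichotomy gives $\liminf_n X_n = 0$ a.s., and because $X$ is $\sqrt{\ZP}$-valued this forces $X_n=0$, equivalently $W_n=0$, at some (finite) time, i.e.\ $\taue<\infty$ a.s.

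For the moment dichotomy I would split on the sign of $\theta = 2c+s^2$. If $\theta<0$ then $2c+s^2<0$, so $X$ is positively recurrent; equivalently, the Lamperti passage-time estimate~\cite[Thm.~3.2.6]{mpw} (specialising~\cite{aim}) with $\beta=1$, whose sufficient condition $2c+(2\beta-1)s^2<0$ reads $\theta<0$, gives $\Exp_i[\tau_0]<\infty$, i.e.\ $\Exp\taue<\infty$. If $\theta>0$ and $2\theta>\sigma^2$ then $\Pr(\taue=\infty)>0$ already, so $\Exp\taue=\infty$ trivially; if $\theta>0$ and $2\theta\leq\sigma^2$ then $X$ is null recurrent and the same passage-time estimate, whose complementary condition $2c+(2\beta-1)s^2>0$ reads $\theta>0$, gives $\Exp_i[\tau_0]=\infty$, i.e.\ $\Exp\taue=\infty$. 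The remaining case $\theta=0$ is the boundary between positive and null recurrence, where $2c+s^2=0$ exactly and the cited passage-time criterion is inconclusive; I expect this to be the main technical point. Here the $O(x^{-\eps})$ control of the drift error terms in Proposition~\ref{prop:branching-lamperti} is far better than the $1/\log x$ scale at which the delicate behaviour occurs, so the sharper boundary criteria (cf.~\cite{mai} and~\cite[Ch.~3]{mpw}) place $\theta=0$ squarely in the null-recurrent class, whence $\Exp\taue=\infty$; alternatively, one may appeal directly to Theorem~1 of Pakes~\cite{pakes}, which already covers this regime. Assembling the cases $\theta<0$, $\theta=0$, and $\theta>0$ completes the proof.
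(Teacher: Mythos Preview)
Your approach is correct and is precisely the ``standard application of Lamperti's recurrence classification'' that the paper invokes in lieu of a proof; you have simply filled in the details the paper omits, including the parameter identities $2c-s^2=\tfrac12(2\theta-\sigma^2)$ and $2c+s^2=\theta$, and the handling of the boundary case $\theta=0$ via the sharper criteria of~\cite{mai} (which the $O(x^{-\eps})$ error terms in Proposition~\ref{prop:branching-lamperti} comfortably permit).

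One minor overstatement worth tightening: the claim that $W$ is irreducible on all of $\ZP$ whenever $\theta>0$ need not hold, since the supports of $\xi$ and $\zeta$ may restrict the reachable states (for instance to a coset of a proper sublattice). This is inessential to your argument, however. What you actually use, and what your observations $\Pr(\xi\geq 2)>0$ and $\Pr(\xi=0)>0$ do establish, is that from any $w_0\geq 1$ the chain can reach arbitrarily large states and can also reach~$0$; combined with the Lyapunov-function transience estimate (which gives $\Pr_x(\taue<\infty)\to 0$ as $x\to\infty$ when $2c>s^2$), this yields $\Pr_{w_0}(\taue=\infty)>0$ without needing full irreducibility on~$\ZP$.
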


The process $X_n$ from Proposition~\ref{prop:branching-lamperti}
is of Lamperti type (the fact that the state-space is $\bbX$ and not $\ZP$ is unimportant),
but it does not satisfy the bounded jumps hypothesis~\eqref{ass:bounded-jumps},
so our Theorem~\ref{thm:lamp-st} on strong transience does not apply.
One might reasonably expect, however, the conclusions of Theorem~\ref{thm:lamp-st} 
would still apply, at least assuming some sufficiently strong moments conditions on $\xi$ and $\zeta$;
some positive evidence in this direction is provided by work of Kosygina \& Zerner~\cite{kz} (see Remark~\ref{rem:branching}). We formulate the following problem to address the general case.

\begin{problem}
     \label{problem:branching}
    Under the conditions of Proposition~\ref{prop:branching-lamperti},
    find effective conditions which ensure that if $2 \theta   > (\beta +1) \sigma^2$
    it holds that $\Exp [ \taue^\beta \mid \taue < \infty] < \infty$,
     while if $2 \theta   < (\beta +1) \sigma^2$
     it holds that $\Exp [ \taue^\beta \mid \taue < \infty] = \infty$.
 \end{problem}

\begin{remark}
    \label{rem:branching}
  The example where $\Pr ( \xi = k ) = 2^{-1-k}$, $k \in \ZP$ (a shifted geometric distribution)  
  has $\Exp \xi = 1$ and $\Var \xi = 2 = \sigma^2$, and so in this case
  the result proposed in Problem~\ref{problem:branching} would say that $\Exp [ \tau \mid \tau < \infty ] < \infty$
  if $\theta > 2$, but $\Exp [ \tau \mid \tau < \infty ] =\infty$
  if $\theta < 2$. For the shifted-geometric example (in fact, a somewhat more general class of examples, allowing
  some partial non-independence), this putative result is indeed true, as has been established by Kosygina \& Zerner~\cite{kz}.
\end{remark}

\begin{proof}[Proof of Proposition~\ref{prop:branching-lamperti}]
Since $x \mapsto \sqrt{x}$ is a bijection over $\RP$, $X_n = \sqrt{ W_n}$ is Markov. To study the increments of $X_n$, we first study the increments of $W_n$. 
Write  
\[ \Delta = \sum_{i=1}^{W_0} ( \xi_{1,i} -1) + \zeta_1 ; \]
then $W_1 - W_0 = \max \{ \Delta , - W_0 \}$.
In particular, since $\Exp \xi = 1$, for $w \in \ZP$,
\begin{equation}
    \label{eq:bp-drift}
 \Exp [ \Delta \mid W_0 = w ] = \Exp  \zeta  = \theta .\end{equation}
By independence of the $\xi_{1,i}$,
$\Var \sum_{i=1}^w   \xi_{1,i} = w \sigma^2$, and so, by  Cauchy--Schwarz, 
\begin{equation}
    \label{eq:bp-variance} \Exp[ \Delta^2 \mid W_0 = w ] = w \sigma^2 + O (w^{1/2} ) .\end{equation}
Also, note that a consequence of the Macinkiewicz--Zygmund inequality 
(see Corollary~8.2 of~\cite[p.~151]{gut}) is that, for a constant $C_p < \infty$,
\[ \Exp \biggl[ \Bigl| \sum_{i=1}^w ( \xi_{n,i} -1) \Bigr|^p \biggr] \leq C_p w^{p/2} , \text{ for all } w \in \ZP.\]
Hence by Minkowski's inequality, there is a constant $C'_p < \infty$ such that
\begin{equation}
\label{eq:m-z-bound}
\Exp  [ | \Delta |^p \mid W_0 = w ]  \leq C'_p (1 +w)^{p/2} , \text{ for all } w \in \ZP. \end{equation}
Moreover, for $k \in \{1,2\}$ and $\delta \in (0,1)$,
\begin{align}
\label{eq:bp-big-jump-1}
    \Exp [ | \Delta|^k \1 { | \Delta | > w^{1-\delta} } \mid W_0 = w ]
& \leq w^{(1-\delta)(k-p)} \Exp [ | \Delta|^p   \mid W_0 = w ] \nonumber\\
& = O ( w^{(p/2)+(1-\delta)(k-p)} ) ,
\end{align}
by~\eqref{eq:m-z-bound}. Here
\[ \frac{p}{2} + (1-\delta)(k-p) - \left( k - 1 \right) = 1 - \frac{p}{2} + \delta (p- k) , \]
which is strictly negative provided $\delta < \frac{p-2}{2(p-k)}$.
In particular, if we fix $\delta \in (0, \frac{p-2}{2p-2} )$,
then the $k \in \{1,2\}$ cases of~\eqref{eq:bp-big-jump-1} combined
with~\eqref{eq:bp-drift} and~\eqref{eq:bp-variance} show that 
there exists $\eps>0$, depending only on $p$ and $\delta$, for which 
\begin{align}
\label{eq:bp-drift-1}
\Exp [ \Delta \1 { | \Delta | \leq w^{1-\delta} }   \mid W_0 = w ] = \theta + O (w^{-\eps} ) ; \\
   \label{eq:bp-variance-1}
\Exp [ \Delta^2 \1 { | \Delta | \leq w^{1-\delta} }   \mid W_0 = w ] = w \sigma^2 + O (w^{1-\eps} ) .
\end{align}

Some elementary calculus shows that
\begin{equation}
    \label{eq:sqrt-bound} 
\bigl| \sqrt{\max\{1+y,0\}} -1 \bigr| \leq | y |, \text{ for all } y \in \R .\end{equation}
Then, using~\eqref{eq:sqrt-bound} with $y = \Delta$, we obtain
\begin{align*}
\Exp [  | X_{1} - X_0 |^p   \mid W_0 = w ] 
& =  \Exp [  | \sqrt{\max\{w+\Delta,0\}} -\sqrt{w}  |^p   \mid W_0 = w ] \\
& =  w^{p/2} \Exp [  | \sqrt{\max\{ 1 + (\Delta /w) ,0\} } -1  |^p   \mid W_0 = w ] \\
& \leq w^{-p/2} \Exp [  | \Delta  |^p   \mid W_0 = w ],
\end{align*}
which is uniformly bounded in $w \geq 1$, by~\eqref{eq:m-z-bound}. This verifies~\eqref{eq:bp-moments}.

Let $\delta \in (0, \frac{p-2}{2p-2})$. Then, using~\eqref{eq:sqrt-bound} once more,
for $k \in \{1,2\}$,
\begin{align}
\label{eq:bp-big-jump} 
\Exp [  | X_{1} - X_0 |^k \1 { | \Delta | > w^{1-\delta} }  \mid W_0 = w ] 
& \leq w^{-k/2} \Exp [ | \Delta  |^k   \1 { | \Delta | > w^{1-\delta} }  \mid W_0 = w ] \nonumber\\
& \leq w^{-k/2} w^{(1-\delta)(k-p)} \Exp [ | \Delta  |^p  \mid W_0 = w ] \nonumber\\
& =O ( w^{(p-k)(2\delta -1)/2} ),
\end{align}
by~\eqref{eq:m-z-bound}. 
Similarly to above, the constraint $\delta < \frac{p-2}{2(p-k)}$ and 
the $k \in \{1,2\}$ cases of~\eqref{eq:bp-big-jump} show that 
there exists $\eps>0$ for which 
\begin{align}
\label{eq:mu1-big-jump}
   \Exp [  | X_{1} - X_0 | \1 { | \Delta | > w^{1-\delta} }  \mid W_0 = w ] 
& = O ( w^{-(1/2)-\eps} ) ; \\
\label{eq:mu2-big-jump}
\Exp [  | X_{1} - X_0 |^2 \1 { | \Delta | > w^{1-\delta} }  \mid W_0 = w ] 
& = O ( w^{-\eps} ).
\end{align}
On the other hand, on ${ | \Delta | \leq w^{1-\delta} } $, we have from Taylor's theorem that
\begin{align}
\label{eq:bp-taylor}
& ( \sqrt{w + \Delta} - \sqrt{w} ) \1 { | \Delta | \leq w^{1-\delta} } 
 = \sqrt{w} \left( \left( 1 + \frac{\Delta}{w} \right)^{1/2} - 1 \right) \1 { | \Delta | \leq w^{1-\delta} } \nonumber\\
& \qquad \qquad {} =  
\sqrt{w} \left[ \frac{\Delta}{2 w}  
- \frac{\Delta^2}{8 w^2} \left( 1 + O  ( w^{-\delta} ) \right) \right] \1 { | \Delta | \leq w^{1-\delta} }. 
\end{align}
Using the estimates~\eqref{eq:bp-drift} and~\eqref{eq:bp-variance}, we  take expectations in~\eqref{eq:bp-taylor} to
obtain
\begin{equation} 
\label{eq:mu1-small-jump}
\Exp [  ( X_{1} - X_0 ) \1 { | \Delta | \leq w^{1-\delta} }  \mid W_0 = w ]
= \frac{\theta}{2 \sqrt{w}} -\frac{\sigma^2}{8\sqrt{w}} + O (w^{-(1/2)-\eps}),
\end{equation}
for some $\eps>0$ (depending on $\delta$). 
Combining~\eqref{eq:mu1-small-jump} and~\eqref{eq:mu1-big-jump} 
establishes the claimed estimate for $\mu_1$. The  estimate for $\mu_2$ is proved in a similar way,
starting from squaring both sides of~\eqref{eq:bp-taylor}. \end{proof}

\appendix
\section{Characterizing strong transience}
\label{sec:appendix}

This appendix presents the proof of Lemma~\ref{lem:equivalence};
we work under the assumption~\eqref{ass:markov}. 
For $x \in S$ set $N (x) := \Exp_x \sum_{n \in \ZP} \1 { X_n = x }$.
First we give a preliminary result.

\begin{lemma}
Suppose that~\eqref{ass:markov} holds, and $\beta >0$.
For any $x, y \in S$,
\begin{align}
\label{eq:T-ineq}
 T_\beta (x, y ) \Pr_y ( \tau_x < \tau_y ) & \leq T_\beta (y) ; \\
\label{eq:last-exit}
 T_\beta (x, y) & \leq  L_\beta (x, y) \leq U_\beta (x,y) .
\end{align}
Moreover, there exist constants $a_\beta, A_\beta$ with $0 < a_\beta \leq A_\beta < \infty$ such that
\begin{align}
\label{eq:U-T-lower} 
U_\beta (x,y ) & \geq a_\beta \left[ N (y) T_\beta(x,y) + (1 + U_\beta (y) ) \Pr_x ( \tau_y < \infty ) \right] ;\\
\label{eq:U-T-upper} 
U_\beta (x,y ) & \leq A_\beta \left[ N (y) T_\beta(x,y) + (1 + U_\beta (y) ) \Pr_x ( \tau_y < \infty ) \right].\end{align}
\end{lemma}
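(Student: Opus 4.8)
The plan is to establish the four bounds in turn, in each case applying the strong Markov property at an appropriate first-passage time together with an elementary pathwise inequality; I expect the comparison \eqref{eq:U-T-lower}--\eqref{eq:U-T-upper} to carry essentially all of the work. For \eqref{eq:T-ineq} I would run the chain from $y$ and restrict attention to the event $\{\tau_x < \tau_y\}$, which is $\cF_{\tau_x}$-measurable and forces $\tau_x < \infty$. On that event the chain reaches $x$ strictly before visiting $y$, so $\tau_y = \tau_x + \tau_y \circ \theta_{\tau_x}$ (with $\theta$ the shift and $\tau_y\circ\theta_{\tau_x}<\infty$); since $\tau_x \geq 1$ one has $(\tau_x + \tau_y\circ\theta_{\tau_x})^\beta \geq (\tau_y\circ\theta_{\tau_x})^\beta$, and the strong Markov property at $\tau_x$ (where $X_{\tau_x}=x$) yields
\[ T_\beta(y) \;\geq\; \Exp_y\bigl[\tau_y^\beta\,\1{\tau_x<\tau_y<\infty}\bigr] \;\geq\; \Pr_y(\tau_x<\tau_y)\,\Exp_x\bigl[\tau_y^\beta\1{\tau_y<\infty}\bigr] \;=\; \Pr_y(\tau_x<\tau_y)\,T_\beta(x,y) . \]

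For \eqref{eq:last-exit} I would argue pathwise. On $\{\tau_y < \infty\}$ the first visit to $y$ occurs no later than the last, so $\tau_y \leq \lambda_y$, while on $\{\tau_y = \infty\}$ the quantity $\tau_y^\beta\1{\tau_y<\infty}$ vanishes; hence $\tau_y^\beta\1{\tau_y<\infty} \leq \lambda_y^\beta$ identically, and $T_\beta(x,y) \leq L_\beta(x,y)$ follows on taking $\Exp_x$. Similarly, if $\lambda_y = m \geq 1$ then $X_m = y$, so the single term $n = m$ shows $\lambda_y^\beta \leq \sum_{n\in\N} n^\beta\1{X_n=y}$, and this is trivial when $\lambda_y = 0$; taking $\Exp_x$ gives $L_\beta(x,y) \leq U_\beta(x,y)$.

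The substance is \eqref{eq:U-T-lower}--\eqref{eq:U-T-upper}, which I would prove by decomposing $U_\beta(x,y)$ at the first passage $\tau_y$. On $\{\tau_y < \infty\}$ every $n \in \N$ with $X_n = y$ is of the form $n = \tau_y + j$, $j \geq 0$, with $X_{\tau_y+j} = y$, so $\sum_{n\in\N} n^\beta\1{X_n=y} = \sum_{j\geq 0}(\tau_y+j)^\beta\1{X_{\tau_y+j}=y}$; the strong Markov property at $\tau_y$ then yields $U_\beta(x,y) = \Exp_x\bigl[\1{\tau_y<\infty}\,G_\beta(\tau_y)\bigr]$, where $G_\beta(m) := \Exp_y\sum_{j\geq 0}(m+j)^\beta\1{X_j=y}$. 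Using the elementary two-sided estimate $c_\beta(m^\beta+j^\beta) \leq (m+j)^\beta \leq C_\beta(m^\beta+j^\beta)$ for $m,j\geq 0$, with $c_\beta,C_\beta$ depending only on $\beta$ (convexity of $t\mapsto t^\beta$ for $\beta \geq 1$, subadditivity for $0<\beta<1$), together with the identities $\Exp_y\sum_{j\geq 0}\1{X_j=y} = N(y)$ and $\Exp_y\sum_{j\geq 0}j^\beta\1{X_j=y} = U_\beta(y)$ (the $j=0$ term dropping out as $\beta>0$), one obtains $c_\beta\bigl(m^\beta N(y) + U_\beta(y)\bigr) \leq G_\beta(m) \leq C_\beta\bigl(m^\beta N(y) + U_\beta(y)\bigr)$. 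Substituting $m = \tau_y$ and integrating over $\{\tau_y < \infty\}$ gives
\[ c_\beta\bigl(N(y)T_\beta(x,y) + U_\beta(y)\Pr_x(\tau_y<\infty)\bigr) \;\leq\; U_\beta(x,y) \;\leq\; C_\beta\bigl(N(y)T_\beta(x,y) + U_\beta(y)\Pr_x(\tau_y<\infty)\bigr) . \]
Finally, since $\tau_y \geq 1$ forces $N(y) \geq 1$ and $T_\beta(x,y) \geq \Pr_x(\tau_y<\infty)$, the term $\Pr_x(\tau_y<\infty)$ is at most $N(y)T_\beta(x,y)$; so the bracketed expression above may be replaced by $N(y)T_\beta(x,y) + (1+U_\beta(y))\Pr_x(\tau_y<\infty)$ in the upper bound with the same constant, and in the lower bound at the cost of a factor $2$, giving \eqref{eq:U-T-lower}--\eqref{eq:U-T-upper} with (say) $A_\beta = C_\beta$ and $a_\beta = c_\beta/2$.

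The main obstacle is the strong Markov bookkeeping in \eqref{eq:U-T-lower}--\eqref{eq:U-T-upper}: one must index the visit times of $y$ correctly relative to $\tau_y$ and verify that conditioning on $\cF_{\tau_y}$ leaves an independent fresh chain from $y$ whose $n^\beta$-weighted visit count is $G_\beta(\tau_y)$. Everything else — the inequality for $(m+j)^\beta$ and the identification of $N(y)$ and $U_\beta(y)$ — is routine. Throughout, all displays are to be read with the convention $\infty \leq \infty$, so the estimates remain valid (if vacuous) when $y$ is a recurrent state and $N(y) = \infty$.
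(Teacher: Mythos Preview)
Your proof is correct and follows essentially the same route as the paper: the strong Markov decomposition at $\tau_x$ for~\eqref{eq:T-ineq}, the pathwise comparison $\tau_y^\beta\1{\tau_y<\infty}\le\lambda_y^\beta\le\sum_n n^\beta\1{X_n=y}$ for~\eqref{eq:last-exit}, and the decomposition of $U_\beta(x,y)$ at $\tau_y$ combined with the two-sided bound $(m+j)^\beta\asymp m^\beta+j^\beta$ for~\eqref{eq:U-T-lower}--\eqref{eq:U-T-upper}. Your final adjustment (using $T_\beta(x,y)\ge \Pr_x(\tau_y<\infty)$ and $N(y)\ge 1$ to absorb the extra ``$+1$'' into the constant) is in fact a detail the paper's own proof glosses over.
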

\begin{proof}
Let $\beta >0$. Note that for $x, y \in S$,
\[ T_\beta (y) = \Exp_y \left[ \tau^\beta_y \1 { \tau_y < \infty } \right] \geq \Exp_y \left[ ( \tau_y -\tau_x )^\beta \1{ \tau_x < \tau_y < \infty } \right] .\]
Hence, by the strong Markov property applied at time $\tau_x$, for $y \neq x$, 
\begin{align*}
T_\beta (y)    \geq \Exp_y \left[ \1 { \tau_x < \tau_y } \Exp_x [ \tau^\beta_y \1 { \tau_y < \infty } ] \right]    =  T_\beta (x, y ) \Pr_y ( \tau_x < \tau_y ) , \end{align*}
as claimed in~\eqref{eq:T-ineq}. 
Since $X_{\lambda_y} = y$ provided $\lambda_y > 0$, we have
\[ \tau_y^\beta \1 { \tau_y < \infty} \leq \lambda_y^\beta \leq \sum_{n \in \N} n^\beta \1 { X_n = y} , \text{ for all } y \in S, \]
which, on taking expectations with respect to $\Pr_x$, yields~\eqref{eq:last-exit}.

Jensen's inequality shows that, for all $a, b \in \RP$,
\begin{equation}
\label{eq:Jensen}
 2^{-(1-\beta)^+} =: a_\beta  \leq \frac{(a+b)^\beta}{a^\beta + b^\beta} \leq A_\beta := 2^{(\beta-1)^+} .\end{equation}
Thus $n^\beta \geq a_\beta ( \tau^\beta_y + (n-\tau_y)^\beta )$ if $\tau_y \leq n$, so that
\begin{align*}
\sum_{n \geq 1} n^\beta  \1 { X_n = y } & \geq a_\beta  \1 { \tau_y < \infty}  \left[
\tau^\beta_y \sum_{n \geq \tau_y} \1 { X_n = y} +  \sum_{n \geq \tau_y }  ( n - \tau_y)^\beta  \1 { X_n = y}\right]  .\end{align*}
By the strong Markov property, on $\{ \tau_y < \infty\}$, $\tau_y$ and $(X_n; n \geq \tau_y )$ are independent and $(X_n; n \geq \tau_y )$ has the same distribution as $(X_n; n \geq 0)$ started from $X_0 =y$, so
\begin{align*}
& {} U_\beta (x,y) \geq \\
& {} \quad {} a_\beta \Exp_x [ \tau^\beta_y \1 { \tau_y < \infty} ] \Exp_y   \sum_{n \geq 0} \1 { X_n = y}
+  a_\beta \Pr_x ( \tau_y < \infty ) \left[ 1 + \Exp_y  \sum_{n \geq 1 } n^\beta \1 { X_n = y} \right] .\end{align*}
This gives the lower bound in~\eqref{eq:U-T-lower},
recalling that $U_\beta (y)$ does not count the visit to~$y$ at time~$0$. The upper bound in~\eqref{eq:U-T-upper}
follows by a similar argument using the upper bound in~\eqref{eq:Jensen}. 
\end{proof}

\begin{proof}[Proof of Lemma~\ref{lem:equivalence}.]
Suppose that $U_\beta (y) < \infty$ for some $y \in S$. Since $N(y) \geq 1$,
it follows from the $y=x$ case of~\eqref{eq:U-T-lower} that $T_\beta (y) \leq a_\beta^{-1} U_\beta (y) < \infty$.
Hence~\ref{lem:equivalence-v} $\Rightarrow$~\ref{lem:equivalence-i}. 
	If $X$ is irreducible, then $\Pr_y ( \tau_x < \tau_y ) > 0$ (or else it would be impossible to reach $x$ from $y$),
	and thus~\eqref{eq:T-ineq} shows that $T_\beta (y) < \infty$ implies that $T_\beta (x,y) < \infty$ for any $x \in S$, so~\ref{lem:equivalence-i} $\Rightarrow$~\ref{lem:equivalence-ii}.
	Assuming that $X$ is transient, $N(y) < \infty$ for all $y \in S$, and so~\eqref{eq:U-T-upper} implies that 
$U_\beta (x,y) < \infty$ whenever~\ref{lem:equivalence-v} holds. 
So we have shown that~\ref{lem:equivalence-v} $\Rightarrow$~\ref{lem:equivalence-i} $\Rightarrow$~\ref{lem:equivalence-ii}  $\Rightarrow$~\ref{lem:equivalence-vi}. Clearly~\ref{lem:equivalence-vi} $\Rightarrow$~\ref{lem:equivalence-v}. This establishes that \ref{lem:equivalence-v} $\Leftrightarrow$~\ref{lem:equivalence-i} $\Leftrightarrow$~\ref{lem:equivalence-ii}  $\Leftrightarrow$~\ref{lem:equivalence-vi}.
Finally~\eqref{eq:last-exit} shows that~\ref{lem:equivalence-vi} $\Rightarrow$~\ref{lem:equivalence-iv} $\Rightarrow$~\ref{lem:equivalence-ii}
and~\ref{lem:equivalence-v} $\Rightarrow$~\ref{lem:equivalence-iii} $\Rightarrow$~\ref{lem:equivalence-i}. This
collection of implications completes the proof.
\end{proof}

\section*{Acknowledgements}
\addcontentsline{toc}{section}{Acknowledgements}

The authors are grateful to two anonymous referees whose attention and suggestions 
led to numerous improvements and clarifications, including  drawing our attention to the related work of~\cite{kz}
and hence 
prompting us to address the link to branching process with migration. 
MM and AW were supported by EPSRC grant EP/W00657X/1.


\begin{thebibliography}{00}

\bibitem{alexander}
K.\ Alexander, 
Excursions and local limit theorems for Bessel-like random walks.
\emph{Electron.\ J.\ Probab.}\ {\bf 16} (2011) 1--44.

\bibitem{aim}
S.\ Aspandiiarov, R.\ Iasnogorodski, and M.\ Menshikov, 
Passage-time moments for nonnegative stochastic processes and
an application to reflected random walks in a quadrant.
\emph{Ann.\ Probab.}\ {\bf 24} (1996) 932--960.

\bibitem{as}
A.\ Asselah and B.\ Schapira,
The two regimes of moderate deviations for the range of a transient random walk.
\emph{Probab.\ Theory Related Fields} {\bf 180} (2021) 439--465.

\bibitem{bd}
J.\ Bertoin and R.A.\ Doney,
Some asymptotic results for transient random walks.
\emph{Adv.\ in Appl.\ Probab.}\ {\bf 28} (1996) 207--226.

\bibitem{bdindo}
G.\ Bottazzi and P.\ Dindo,
Drift criteria for persistence of discrete stochastic processes on the line.
\emph{J.\ Math.\ Economics} {\bf 101} (2022) 102696.


\bibitem{BR}
R.N.\ Bhattacharya and R.R.\ Rao, 
\emph{Normal Approximation and Asymptotic Expansions}.
Updated reprint of the 1986 edition,
 Classics in Applied Mathematics, 
SIAM, Philadelphia, 2010.

\bibitem{bgt}
N.H.\ Bingham, C.M.\ Goldie, and J.L.\ Teugels.
\emph{Regular Variation}. Cambridge University Press, Cambridge, 1989.

\bibitem{dkw1}
D.\ Denisov, D.\ Korshunov, and V.\ Wachtel,
Potential analysis for positive recurrent Markov chains with asymptotically zero drift: Power-type asymptotics.
\emph{Stochastic Process.\ Appl.}\ {\bf 123} (2013) 3027--3051.

\bibitem{dkw-book}
D.\ Denisov, D.\ Korshunov, and V.\ Wachtel,
\emph{At the Edge of Criticality: Markov Chains with Asymptotically Zero Drift}.
arXiv:1612.01592 (2016).

\bibitem{dkw2}
D.\ Denisov, D.\ Korshunov, and V.\ Wachtel,
Renewal theory for transient Markov chains with asymptotically zero drift.
\emph{Trans.\ Amer.\ Math.\ Soc.}\ {\bf 373} (2020) 7253--7286.

\bibitem{dk}
R.A.\ Doney and D.A.\ Korshunov, 
Local asymptotics for the time of first return to the origin of transient random walk. 
\emph{Stat.\ Probab.\ Lett.}\ {\bf 81} (2011) 1419--1424.

\bibitem{de} A.\ Dvoretzky and P.\ Erd\H os, 
Some problems on random walk in space.
pp.~353--367 in: 
Proc.\ Second Berkeley Symp.\ on Math.\ Statist.\ and Probab., Vol.~2, Univ.\ of Calif.\ Press, 1951.

\bibitem{et} 
P.\ Erd\H os and S.J.\ Taylor, Some intersection properties of random walk paths.
\emph{Acta Math.\ Sci.\ Hung.}\ {\bf 11} (1960) 231--248.

\bibitem{esseen}
C.G.\ Esseen,
On the concentration function of a sum of independent random variables.
\emph{Z.\ Wahrscheinlichkeitstheorie verw.\ Geb.}\ {\bf 9} (1968) 290--308.

\bibitem{fal}
A.M.\ Fal$'$, 
Certain limit theorems for an elementary Markov random walk.
\emph{Ukrainian Math.\ J.}\ {\bf 33} (1981) 433--435.
Translated from \emph{Ukrainskii Mat.\ Z.}\ {\bf 33} (1981) 564--566.

\bibitem{gallardo}
L.\ Gallardo,
Comportement asymptotique des marches aleatoires associees aux polynomes de
Gegenbauer et applications.
\emph{Adv.\ in Appl.\ Probab.}\ {\bf 16} (1984) 293--323.

\bibitem{gw}
N.\ Georgiou and A.R.\ Wade,
Non-homogeneous random walks on a semi-infinite strip.
\emph{Stoch.\ Process.\ Appl.}\ {\bf 124} (2014) 3179--3205. 

\bibitem{gut} 
A.\ Gut,
\emph{Probability: A Graduate Course.}
\emph{Springer}, Berlin, 2005.

\bibitem{hughes}
B.D.\ Hughes,
\emph{Random Walks and Random Environments. Volume 1: Random Walks.}
Clarendon Press, Oxford, 1995.

\bibitem{jo}
N.C.\ Jain and S.\ Orey, 
On the range of random walk.
\emph{Israel J.\ Math.}\ {\bf 6} (1968) 373--380.

\bibitem{jp72a}
N.C.\ Jain and W.E.\ Pruitt,
The range of random walk.
pp.~31--50 in: Proc.\ Sixth Berkeley Symp.\ on Math.\ Statist.\ and Probab., Vol. 3, Univ.\ of Calif.\ Press, 1972.

\bibitem{joffe}
A.\ Joffe,
\emph{Sojourn Time for Stable Processes}. Thesis, Cornell University, 1959.

\bibitem{kz}
E.\ Kosygina and M.P.W.\ Zerner,
Excursions of excited random walks on integers.
\emph{Electron.\ J.\ Probab.}\ {\bf 19} (2014), no.~25.

\bibitem{lamp1} J.\ Lamperti, Criteria for the recurrence and transience of stochastic processes I,
\emph{J.\ Math.\ Anal.\ Appl.}\ {\bf 1} (1960) 314--330.

\bibitem{lamp2} J.\ Lamperti, 
A new class of probability limit theorems.
\emph{J.\ Math.\ Mech.}\ {\bf 11} (1962) 749--772.

\bibitem{lamp3}
J.\ Lamperti, 
Criteria for
stochastic processes II: passage-time moments.
\emph{J.\ Math.\ Anal.\ Appl.}\ {\bf 7} (1963) 127--145.


\bibitem{lmw}
C.H.\ Lo, M.V.\ Menshikov, and A.R.\ Wade, 
Cutpoints of non-homogeneous random walks. 
\emph{ALEA Latin Amer.\ J.\ Probab.\ Math.\ Statist.}\ {\bf 19} (2022) 493--510.

\bibitem{lw}
C.H.\ Lo and A.R.\ Wade, 
On the centre of mass of a random walk.
\emph{Stoch.\ Process.\ Appl.}\  {\bf 129} (2019) 4663--4686.

\bibitem{mai} M.V. Menshikov, I.M. Asymont, and R. Iasnogorodskii,
Markov processes with asymptotically zero drifts.
{\em Problems of
Information Transmission} {\bf 31} (1995) 248--261; translated from
{\em Problemy Peredachi Informatsii} {\bf 31} (1995) 60--75 (Russian).

\bibitem{mpw} M.\ Menshikov, S.\ Popov, and A.\ Wade, 
\emph{Non-homogeneous Random Walks}.
Cambridge University Press, 
Cambridge, 2017.	

\bibitem{pakes}
A.G.\ Pakes,
On the critical Galton-Watson process with immigration.
\emph{J.\ Austral.\ Math.\ Soc.}\ {\bf 12} (1971) 476--482.

\bibitem{port66}
S.C.\ Port,
Limit theorems involving capacities.
\emph{J.\ Math.\ Mech.}\ {\bf 15} (1966) 805--832.

\bibitem{port67}
S.C.\ Port,
Limit theorems for transient Markov chains.
\emph{J.\ Combinat.\ Theory} {\bf 2} (1967) 107--128.

\bibitem{rosenkrantz}
W.A.\ Rosenkrantz,
A local limit theorem for a certain class of random walks.
\emph{Ann.\ Math.\ Statist.}\ {\bf 37} (1966) 855--859.

\bibitem{sandric}
N.\ Sandri\'c,
On transience of L\'evy-type processes.
\emph{Stochastics} {\bf 88} (2016) 1012--1040.

\bibitem{sato}
K.-I.\ Sato,
Criteria of weak and strong transience for L\'evy processes.
pp.~438--449 in: S.~Watanabe \emph{et al.}~(eds.) \emph{Probability Theory and Mathematical Statistics}, Proc.\ Seventh Japan--Russia Symp., World Scientific, Singapore, 1996.

\bibitem{sw}
K.-I.\ Sato and T.\ Watanabe,
Moments of last exit times for L\'evy processes.
\emph{Ann.\ Inst.\ H.\ Poincar\'e Probab.\ Statist.}\ {\bf 40} (2004) 207--225.

\bibitem{sw2}
K.-I.\ Sato and T.\ Watanabe,
Last exit times for transient semistable processes.
\emph{Ann.\ Inst.\ H.\ Poincar\'e Probab.\ Statist.}\ {\bf 41} (2005) 929--951.

\bibitem{spitzer64}
F.\ Spitzer,
Electrostatic capacity, heat flow, and Brownian motion.
\emph{Z.\ Wahrscheinlichkeitstheorie verw.\ Geb.}\ {\bf 3} (1964) 110--121.

\bibitem{spitzerbook}
F.\ Spitzer, \emph{Principles of Random Walk}. 2nd ed., Springer, New York, 1976.

\bibitem{takeuchi}
J.\ Takeuchi,
Moments of the last exit times.
\emph{Proc.\ Japan Acad.}\ {\bf 43} (1967) 355--360.

\end{thebibliography}
\end{document}